\newcommand{\ul}{\underline}
\newcommand{\ol}{\overline}
\newcommand{\inprob}	{\overset{\mathcal{P}}{\to}}
\newcommand{\ex}[1]		{\mathbb{E}\left[ #1 \right]}
\newcommand{\exc}[1]	{\mathbb{E}[ #1 ]}
\newcommand{\pr}[1]		{\mathbb{P}\left\{ #1 \right\}}
\newcommand{\prc}[1]	{\mathbb{P}\{ #1 \}}
\newcommand{\prm}[1]	{\mathbb{P}\big\{ #1 \big\}}
\newcommand{\var}[1]	{\mathbf{Var}\left( #1 \right)}
\newcommand{\event}[1]	{\left\{ #1 \right\}}
\renewcommand{\exp}[1]	{\operatorname{exp}\left( #1 \right)}
\newcommand{\floor}[1]	{\left\lfloor #1 \right\rfloor}
\newcommand{\floorc}[1]	{\lfloor #1 \rfloor}
\newcommand{\ceil}[1]	{\left\lceil #1 \right\rceil}
\newcommand{\eps}		{\varepsilon}
\newcommand{\emptystring} {\emptyset}
\newcommand{\R}		{\mathbb{R}}
\newcommand{\cB}		{\mathcal{B}}
\newcommand{\cV}		{\mathcal{V}}
\DeclareMathOperator*{\argmax}{arg\,max}
\newcommand{\cA}{\mathcal{A}}
\newtheorem{theorem}{Theorem}[section]
\newtheorem{lemma}[theorem]{Lemma}
\newtheorem{proposition}[theorem]{Proposition}
\theoremstyle{definition}
\def\timenow{\@tempcnta\time
  \@tempcntb\@tempcnta
  \divide\@tempcntb60
  \ifnum10>\@tempcntb0\fi\number\@tempcntb
  \multiply\@tempcntb60
  \advance\@tempcnta-\@tempcntb:\ifnum10>\@tempcnta0\fi\number\@tempcnta}
\title{Longest path distance in random circuits}
\date{\today}                                           
\author{Nicolas Broutin}
\address{INRIA Rocquencourt, 78153 Le Chesnay, France}
\email{nicolas.broutin@inria.fr}
\author{Omar Fawzi}
\address{School of Computer Science, McGill University,  H3A 2K6, Montreal, Canada}
\email{ofawzi@cs.mcgill.ca}
\begin{document}
\maketitle

\begin{abstract}
We study distance properties of a general class of random directed acyclic graphs (\textsc{dag}s). In a \textsc{dag}, many natural notions of distance are possible, for there exists multiple paths between pairs of nodes. The distance of interest for circuits is the maximum length of a path between two nodes. We give laws of large numbers for the typical depth (distance to the root) and the minimum depth in a random \textsc{dag}. This completes the study of natural distances in random \textsc{dag}s initiated (in the uniform case) by Devroye and Janson (2009+). We also obtain large deviation bounds for the minimum of a branching random walk with constant branching, which can be seen as a simplified version of our main result.
\end{abstract}

\section{\bf Introduction}

Motivated by the circuit value problem, and the delay to evaluate the output, \citet*{DSSTT94} initiated the study of depths in random circuits. The model is that of uniform random circuits \cite{DL95}: a random circuit is built by iterative addition of gates, each gate randomly choosing $k$ inputs among the outputs of the gates already present. The model has been further studied by \citet*{AGM99} and \citet{TX96}. Writing $D_x$ for the depth of the gate arrived at step $x$, \citet{TX96} proved that the depth a random circuit of $n$ gates, $\max\{D_x : x\le n\}$ is asymptotic to $ke \log n$ in probability ($\log$ denotes the natural logarithm).

Actually, in a random circuit, many distinct directed paths may link two gates, and one can define different notions of distances. 
\citet{DJ09} started the systematic analysis of these distances. Among those, the distances defined by paths that are only allowed to look one step ahead (``greedy'' distances) were studied by \citet*{Mah09} and \citet*{DFF10}. These greedy distances have permitted to quantify the effect of the ``power of choice'' for depths in random trees: like a gate, each node has $k$ potential contacts, but only attaches to the most desirable according to a measure of optimality. The models in which the choice of each node is made only according the labels of its potential contacts has been studied by \citet*{Mah09} and \citet*{DFF10}. \citet*{DKM07} studied more general rules of growing trees, where the choice of each node might depend on the degree or the distance to the root of the potential ancestors.

Unfortunately, the distance that has the most meaning in terms of performance of circuits cannot be defined in a greedy way: the number of layers of a circuit depends on the \emph{maximum} length of a path between an output and an input. In this paper, we study precisely this distance, and hence the number of layers required to evaluate the random circuit. Aside from the depth of the entire circuit studied in \citet{AGM99} and \citet{TX96}, the typical depth $D_n$ and the minimum depth of a gate are also of interest. For the latter quantity, since $\min\{D_x: 0\le x\le n\}=0$, we study $\min\{D_x: x\ge n/2\}$ to estimate the concentration of the depths in the circuits. Our main results are laws of large numbers for $D_n$ (Theorem~\ref{thm:typlongest}) and $\min\{D_x: x\ge n/2\}$ (Theorem~\ref{thm:minlongest}). In particular, for the model of uniform random circuits of \cite{TX96} and $k=2$, we show that
\begin{equation}\label{eq:lln}
\frac{D_n}{\log n} \to\lambda \qquad \text{ and } \qquad \frac{\min_{n/2 \leq x \leq n} D_x}{\log n} \to \lambda_{\min}=\frac{\lambda}{2}
\end{equation}
in probability, where $\lambda = 4.31107\dots$ is the only solution to the equation $\lambda \log (2e/\lambda)=1$ that is greater than one. Doing so, we prove a conjecture of \citet{DJ09} about the value of $\lambda$, and we also identify $\lambda_{\min}$ for which they did not have a guess. This completes the study of some natural distances in \emph{uniform} random circuits started in \cite{TX96} and \cite{DJ09}. In fact, our results apply to a more general class of random \textsc{dag}s where the parent nodes of $x$ are not necessarily chosen uniformly from $\{0, \dots, x-1\}$ \cite{DFF10} (we will be more precise shortly). In general, the limit constants are characterized uniquely as the root of some (often implicit) equation that depends on the precise model of attachment.

The problem of distances in random \textsc{dag}s is related to minima in branching random walks. The relation between distances in \emph{random trees} such as random recursive trees, or binary search trees and minima in branching random walks has been exhibited by \citet{Pittel1985} and \citet{Devroye1986}. Distances in \emph{random circuits} can also be studied using the simpler setting of branching random walk, but the relation is much more intricate because the circuits do not have a real tree structure.
Although we do not use the results about branching random walks directly, we think that the reader may find it useful to warm up with this simpler model. Moreover, the ideas leading to the tail bounds for minima in branching random walks presented in Theorems~\ref{thm:BRW-righttail} 
%
%
also underly the main argument behind our analysis of the behaviour of $\min_{n/2 \leq x \leq n} D_x$.  

\medskip
	\noindent\textsc{Further bibliographic remarks.} For a slightly different random circuit model, Mahmoud and Tsukiji have investigated the asymptotic behaviour of the number of outputs, that is gates that do not feed in any other gate \cite{TsMa2001a,MaTs2004a}. The profile of the related model of $k$-trees has been studied by \citet*{DaHwBoSo2010a}. Depths in random circuits are also used by \citet{CoGeSi1995a} in relation to parallel computations time.

\medskip
\noindent\textsc{Outline of the paper.} The model of random \textsc{dag} is formally introduced in Section~\ref{sec:def}. In Section~\ref{sec:BRW}, we study tail bounds for minima of branching random walks. The results presented there help understand why the values for the limiting constants in the law of large numbers in (\ref{eq:lln}) are what they are.  To the best of our knowledge, the exponential rates in the tail estimates we derive for the branching random walk were not known before. Section~\ref{sec:typical_distance} is then devoted to the study of the typical number of edges $D_n$ on the longest path between $n$ and the root of a random \textsc{dag}. Finally, the minimal distance between a node (with sufficiently large label) and the root is analyzed in Section~\ref{sec:minimum_distance}. Although we do not think that the results for branching random walks are obvious, the main difficulty consists, in the \textsc{dag} model, in dealing with the intricate dependence between the different paths up the root that originate not only from a single node, but also from different nodes.

\section{\bf Definitions and notation}\label{sec:def}



We consider the more general model of scaled attachment random recursive \textsc{dag}s ($k$-\textsc{sarrd}) introduced in \cite{DFF10}. We are given a random variable $X$, with support in $[0,1)$. In a $k$-\textsc{sarrd} with attachment $X$ (or $(X,k)$-\textsc{sarrd}), every node $x$ chooses $k$ parents: $\floor{x X_{x,1}}, \floor{x X_{x,2}}, \dots, \floor{x X_{x,k}}$ where  $X_{0,1}, \dots, X_{0, k}$, \dots, $X_{n,1}, \dots, X_{n,k}$ are independent copies of $X$. In other words, the random variable $X_{x,p}$ determines the $p$-th parent of node $x$. A random \textsc{dag} of size $n$ is then composed of the root $0$ and the nodes $1,\dots, n$ and the edge set binding each node to its $k$ parents. When $X$ is uniform, one obtains the uniform recursive circuit ($k$-\textsc{urrd}) that is the subject of \cite{TX96, DJ09, AGM99}.\footnote{One can also consider close variants of the model where the parents are chosen without replacement and the \textsc{dag} has $k$ roots \cite{DL95}.}

In the following, we always reserve $k$ for the number of parents in the \textsc{dag}. 
We let $D_n$ represent the length of the longest path from node $n$ to node $0$.  

We now introduce some notation to describe the \textsc{dag}. The set of finite words on the alphabet $\mathcal A=[k]=\{1,\dots, k\}$ is denoted by
$$\mathcal U:=\bigcup_{m\ge 0} \mathcal A^m.$$ The set $\mathcal U$ is naturally endowed with a partial order: we write $v\preceq u$ if $v$ is a prefix of $u$. We will also think of $\mathcal U$ as a $k$-ary tree, where $v\preceq u$ if $v$ is an ancestor of $u$.

For a node $x$ and a string $s \in \mathcal U$, $L(x,s)$ is the label of the ancestor of $x$ obtained by following the path labeled by $s$. For example, $L(x,1)$ is the first parent of node $x$. 
Concatenation of strings (that correspond to path) is denoted by $\cdot$\,. Note that in our model we have $L(x,s\cdot p) = \lfloor L(x,s)X_{L(x,s), p}\rfloor$. 
For a string $s$ and $i\ge 0$, let $\underline s_{i}$ be the string composed of the first $i$ letters of $s$. In the special case where $i=|s|-1$, we write $ s^-:=\ul s_{|s|-1}$ for the string where the last letter is dropped. The last letter of the word $s$ is denoted by $\ol s$ so that $s=s^- \cdot \ol s$. 

\begin{figure}[htb]
\small
\begin{picture}(100,130)
\put(0,0){\includegraphics[scale=.5]{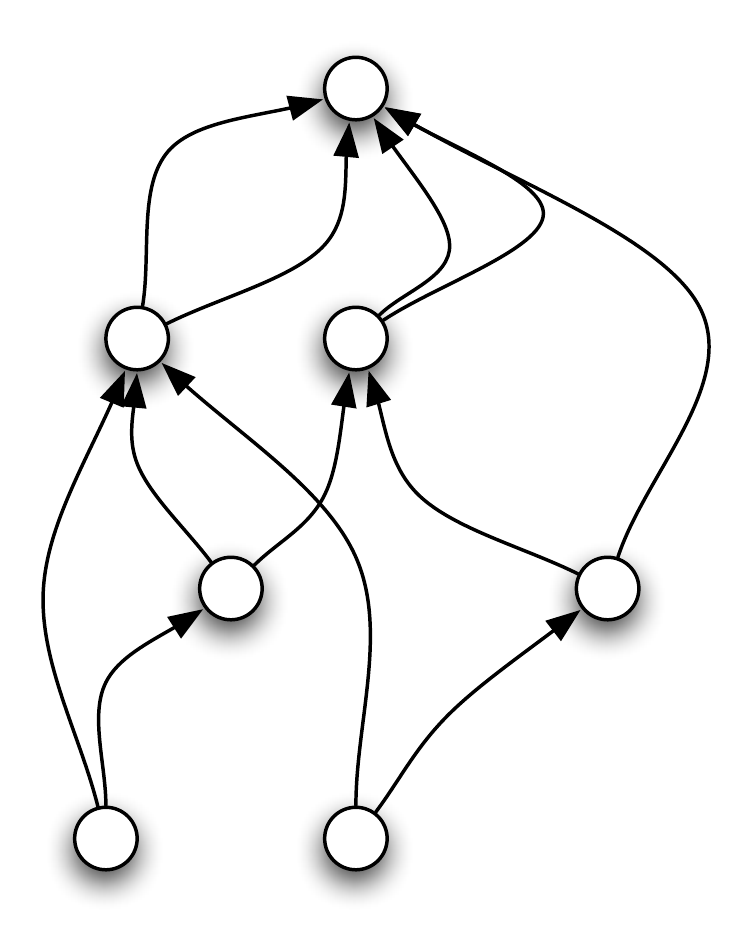}}
\put(49,122){0}
\put(18,86){1}
\put(49,86){2}
\put(86,50){3}
\put(49,14){4}
\put(31,50){5}
\put(13,14){6}
\end{picture}
\begin{picture}(150,130)
\put(14,0){\includegraphics[scale=.5]{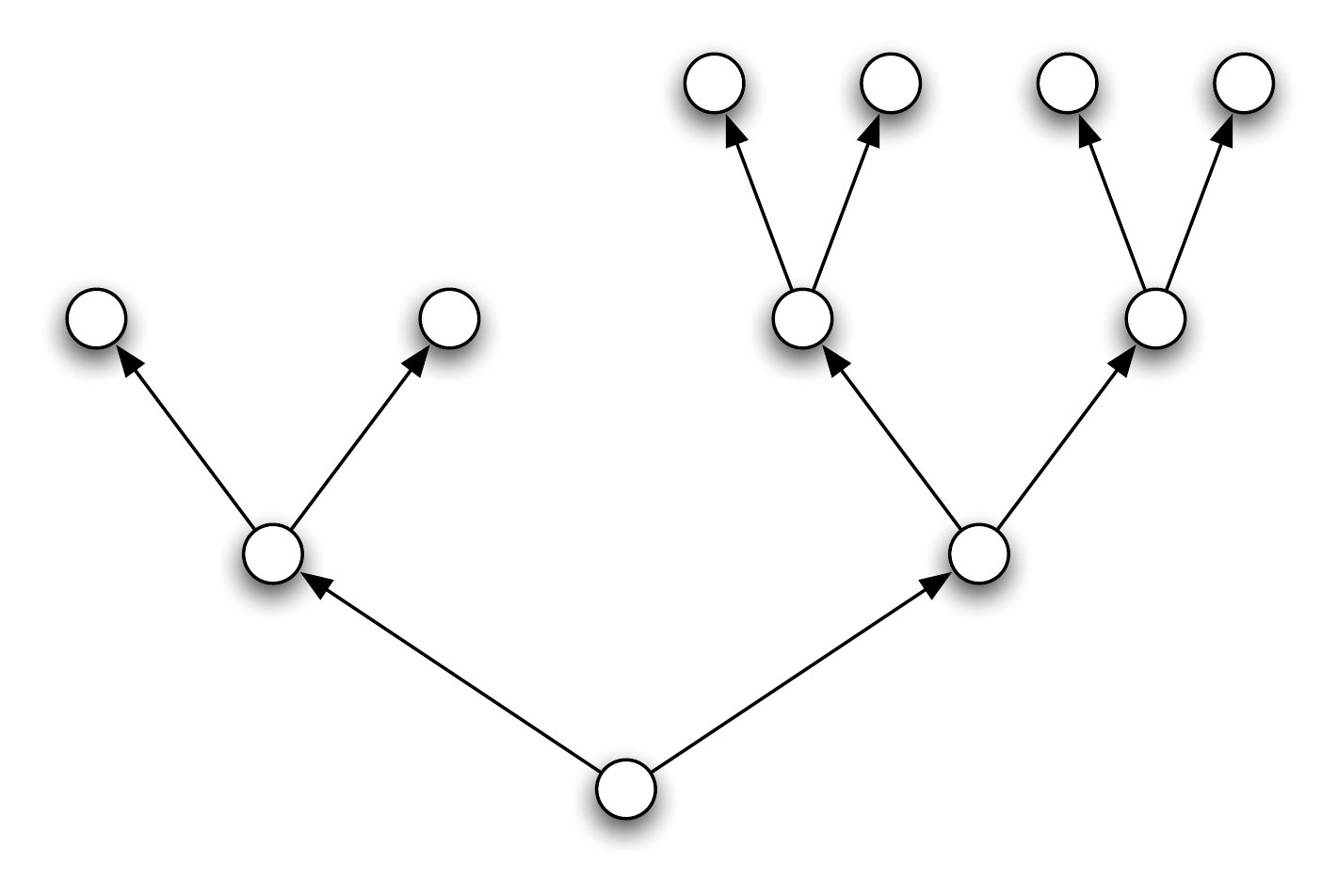}}
\put(108,14){6}
\put(162,50){5}
\put(53,50){1}
\put(26,86){0}
\put(80,86){0}
\put(135,86){1}
\put(189,86){2}
\put(121,122){0}
\put(148,122){0}
\put(175,122){0}
\put(203,122){0}
\end{picture}
\caption{\label{fig:dag}A binary dag on $\{0,1,\dots, 6\}$ and the corresponding exploration tree from node $6$.}
\end{figure}

Of course, the paths up the root corresponding two strings $s$ and $s'$ might intersect, so that even when $s$ and $s'$ have no non-trivial common prefix, the random variables $L(n,s)$ and $L(n,s')$ are not independent in general (see Figure~\ref{fig:dag}). However, the dependence of random variables $L(n,s)$, $s\in \mathcal U$ is ``essentially caused'' by common prefixes of the strings $s$. We will later justify precisely this informal fact as we prove our main results in Sections~\ref{sec:typical_distance} and~\ref{sec:minimum_distance}. Understanding the lengths of the paths originating from a node $n$ hence somewhat reduces to the analysis of the evolutions of the labels $L(n,s)$ in the $k$-ary \emph{tree} $\mathcal U$.
Furthermore, the reader should be intuitively convinced that the labels should satisfy the approximation
$$L(n,s)\approx n\prod_{u\preceq s}X_u,$$
where $X_u, u\in \mathcal U$ are i.i.d.\ copies of $X$, which makes the connection between paths in $X$-\textsc{dag}s and a branching random walk with step distribution $Y = -\log X$. Since we are interested in \emph{maximum} lengths of paths, we should naturally study paths along which the $L(x,s)$ stays large; along these paths, the branching random walk should be small. This leads us to the study of asymptotics for \emph{minima} in a branching random walk.

\section{\bf Large deviations for extremes of a branching random walk}
\label{sec:BRW}

In this section, we consider branching random walks with constant branching factor $k$ and step distribution $Y$.   Let $Y_u$, $u\in \mathcal U\setminus \varnothing$ be independent and identically distributed random variables distributed as $Y$. It is convenient to assume that $Y_\varnothing=0$. Then, define the \emph{position} of a word $u\in \mathcal U$ by $S_u=\sum_{v\preceq u} Y_v$. We are interested in the minimum label over the $k^m$ nodes at distance $m$ from the root, $M_m=\min\{ S_u: u\in \cA^m\}$. 

Asymptotics for $M_m$ depend on rate functions associated with the step distribution $Y$. Let $\Lambda$ be the cumulant generating function for the step distribution $Y$ and define its convex dual $\Lambda^\star$ \cite{DZ98, Rockafellar1970} by
\[
\Lambda(\lambda) = \log \exc{e^{\lambda Y}} \qquad \text{ and } \qquad \Lambda^\star(z) = \sup_{\lambda\in \R} \{ \lambda z - \Lambda(\lambda)\}
\]
where $\log$ denotes the natural logarithm. The following classical theorem describes the first order asymptotics of $M_m$. In the entire document, we will always have $Y\ge 0$, so that the moment condition $\Lambda(\lambda)<\infty$ for some $\lambda<0$ will always be satisfied.


\begin{theorem}[\cite{Ham74, Kin75, Big76}]
\label{thm:minbrw1}
Suppose $Y$ is such that $\Lambda(\lambda) < \infty$ for some $\lambda<0$. Let $\gamma = \inf \{z \leq \ex{Y} : \Lambda^\star(z) < \log k\}$. Then, $\gamma$ is finite and with probability $1$,
\[
\lim_{m \to \infty} \frac{M_m}{m} = \gamma.
\]
Moreover, if $\ex{Y} < \infty$, by the dominated convergence theorem, we have
\[
\lim_{m \to \infty} \frac{\ex{M_m}}{m} = \gamma.
\]
\end{theorem}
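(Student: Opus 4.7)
The statement is the classical Hammersley--Kingman--Biggins theorem; my plan is to prove $\liminf_m M_m/m \ge \gamma$ and $\limsup_m M_m/m \le \gamma$ almost surely, feeding on Cramér-type large-deviation bounds for the random walk $S_m = Y_1 + \dots + Y_m$ with step law $Y$. Finiteness of $\gamma$ is immediate: the defining set is non-empty since $\leg{\ex{Y}} = 0 < \log k$, so $\gamma \le \ex{Y} < \infty$, while $Y \ge 0$ forces $M_m \ge 0$ and so $\gamma \ge 0$.

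For the lower bound, fix $z < \gamma$. The definition of $\gamma$ together with the fact that $\leg{\cdot}$ is convex and non-increasing on $(-\infty,\ex{Y}]$ with value $0$ at $\ex{Y}$ forces $\leg{z} > \log k$ (after a slight perturbation of $z$ if $\leg{\cdot}$ is flat at the critical value). The union bound over the $k^m$ particles at generation $m$ combined with Chernoff's inequality then gives
\[
\pr{M_m \le zm} \;\le\; k^m\,\pr{S_m \le zm} \;\le\; \exp{-m(\leg{z} - \log k)},
\]
which is summable in $m$. Borel--Cantelli delivers $\liminf_m M_m/m \ge z$ almost surely, and letting $z \uparrow \gamma$ through a rational sequence yields $\liminf_m M_m/m \ge \gamma$.

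The upper bound is the delicate direction, and is where I expect the real work. Fix $z > \gamma$, so $\leg{z} < \log k$, and let $N_\ell = \#\{u \in \cA^\ell : S_u \le z\ell\}$. Cramér in its sharp form gives $\ex{N_\ell} = k^\ell \pr{S_\ell \le z\ell}$, which grows exponentially in $\ell$. A second-moment estimate---splitting pairs $(u,v) \in \cA^\ell \times \cA^\ell$ according to the length of their longest common prefix and exploiting independence of the increments past the splitting point---yields $\ex{N_\ell^2} \le C\,\ex{N_\ell}^2$ for some constant $C$ and all $\ell$ large, so Paley--Zygmund gives $\pr{N_\ell \ge 1} \ge 1/C$. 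Iterating along generations of step $\ell$ embeds a supercritical Galton--Watson process into the subtree of descendants $u$ satisfying $S_u \le z|u|$: each "good" particle at a multiple of $\ell$ roots an independent copy of the \textsc{brw} and produces at least one further good descendant with probability at least $1/C$. The event $\{\limsup_m M_m/m \le z\}$ lies in the tail $\sigma$-field of the i.i.d.\ family $\{Y_u\}$---modifying finitely many $Y_u$ alters $M_m$ by a bounded additive amount, negligible after dividing by $m$---so Kolmogorov's 0--1 law forces its probability into $\{0,1\}$ and the Paley--Zygmund bound rules out $0$. Letting $z \downarrow \gamma$ along rationals completes the a.s.\ convergence. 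The extension $\ex{M_m}/m \to \gamma$ under $\ex{Y}<\infty$ then follows from this a.s.\ convergence combined with the uniform integrability of $\{M_m/m\}$ implied by $0 \le M_m \le S_v$ for any fixed path $v \in \cA^m$.

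The main obstacle I anticipate is the second-moment computation for $N_\ell$ and the subsequent Galton--Watson embedding, both of which require careful bookkeeping to keep the Cramér-type decay uniform in the splitting level; the one-dimensional large-deviation inputs for $S_m$ and the Borel--Cantelli half are entirely standard.
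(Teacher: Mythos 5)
The paper does not prove Theorem~\ref{thm:minbrw1} at all---it is cited as a classical result of Hammersley, Kingman, and Biggins---so the comparison is of your argument on its own terms rather than against an in-paper proof.

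Your lower bound ($\liminf_m M_m/m \ge \gamma$ via union bound, Chernoff, Borel--Cantelli) is correct, and so is the uniform-integrability argument for $\ex{M_m}/m \to \gamma$. The gap is in the upper bound, specifically the claim that the naive second moment of $N_\ell = \#\{u \in \cA^\ell : S_u \le z\ell\}$ satisfies $\ex{N_\ell^2} \le C\,\ex{N_\ell}^2$ uniformly in $\ell$. This is false for $z$ close to $\gamma$. Writing $j=\theta\ell$ for the common-prefix length of a pair $(u,v)$, a Cram\'er computation shows the exponential rate of the $j$-th contribution to $\ex{N_\ell^2}/\ex{N_\ell}^2$ is
\[
g(\theta) = \sup_b\Bigl[-\theta\Lambda^\star(b) - 2(1-\theta)\Lambda^\star\Bigl(\tfrac{z-b\theta}{1-\theta}\Bigr)\Bigr] - \theta\log k + 2\Lambda^\star(z),
\]
and for, say, centred Gaussian increments one finds $g(\theta)=\theta\bigl(\tfrac{z^2}{1+\theta}-\log k\bigr)$, which is strictly \emph{positive} for small $\theta$ whenever $\log k < z^2 < 2\log k$, i.e.\ for all $z$ between $\gamma$ and roughly $\gamma/\sqrt2$. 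In that regime $\ex{N_\ell^2}/\ex{N_\ell}^2 \to \infty$, so Paley--Zygmund gives you nothing. The usual cure is a barrier (counting only paths staying below a linear envelope at \emph{every} intermediate time, as in many-to-two/spine computations), which is considerably heavier than what you sketch. Moreover, even granting $\pr{N_\ell\ge1}\ge1/C$, that alone does not make your embedded process survive: a branching process whose offspring law is Bernoulli with success probability $1/C<1$ dies out almost surely, so ``produces at least one further good descendant with probability at least $1/C$'' is not the right criterion.

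Fortunately you do not need the second moment at all. The process $\{Z_i\}$ counting good particles at levels $i\ell$ is a genuine Galton--Watson process with offspring law $N_\ell$, and $\ex{N_\ell} = k^\ell\,\pr{S_\ell\le z\ell} = e^{\ell(\log k - \Lambda^\star(z)) + o(\ell)} \to \infty$ by Cram\'er's lower bound, since $\Lambda^\star(z)<\log k$ for $z>\gamma$. Supercriticality alone gives positive survival probability, which is all your Kolmogorov 0--1 argument (itself sound, and nicely observed) requires. Incidentally, this first-moment-plus-GW-embedding mechanism is exactly the one the paper uses in its proof of Theorem~\ref{thm:BRW-lefttail}, except that there the embedded process is \emph{sub}critical and the Chung--Erd\H{o}s inequality is applied to $Z_i$ using the GW variance recursion, with $\var{Z_1}<\infty$ automatic since $Z_1 \le k^L$---a much tamer second-moment input than the uniform-in-$\ell$ bound you claim for $N_\ell$.
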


Results on the minima in branching random walks have many applications in the study of random trees. See \citet{Dev98} for a survey. Here, we are interested in tail bounds for the distribution of $M_m/m$. \citet{McD95} and \citet{AR09} have proved
%
%
%
general exponential tail bounds for the deviations of $M_m/m$; however, the exponential rates in the bounds there are not optimal, and our setting requires to identify them. In the following, we assume that $Y$ is asymptotically exponential in the following sense: there exists a constant $\alpha\in(0,\infty)$ such that
\begin{equation}\label{eq:exp_tails}\frac{\log \pr{Y\ge x}}x \to -\alpha,\end{equation}
as $x\to\infty$. 


The two sides of the distribution of $M_m/m$ have in general very different behaviour. Quite intuitively, if one wants to make the minimum value at level $m$ smaller, it suffices to modify the random variables $Y_u$ on a single path of length $m$, so one expects that the tail should have exponential tails with a scale of $m$ on the left. On the other hand, to make $M_m$ larger, one needs to modify \emph{all} $k^m$ paths of length $m$, and it is not immediately clear how one should do this in order to optimize the probability. We will show that when the random variable $Y$ has exponential tails in the sense of (\ref{eq:exp_tails}), it is essentially best to modify the random variables on the first level of the tree by a huge amount. We now turn to formalizing this intuition.

\begin{theorem}[Right tail] 
\label{thm:BRW-righttail}
Suppose that $Y \geq 0$, $\log \prc{Y\ge x}= -\alpha x+o(x)$, as $x\to\infty$, for $\alpha\in (0,\infty)$.
Let $\eps, \delta > 0$. Then there exists constants $c,c'\in (0,\infty)$ such that, for all $m$ large enough,
\[
c \cdot e^{-k \alpha\eps(1 + \delta) m} \leq \pr{M_m \geq (\gamma + \eps) m} \leq c' \cdot e^{-k \alpha\eps(1-\delta) m}.
\]
\end{theorem}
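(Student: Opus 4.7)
The plan is to exploit the recursive structure of the BRW at its first level. Conditionally on the $k$ first-level increments $Y_1,\dots,Y_k$, the minimum decomposes as
\[
M_m = \min_{i\in[k]}\bigl(Y_i + M_{m-1}^{(i)}\bigr),
\]
where the $M_{m-1}^{(i)}$ are i.i.d.\ copies of $M_{m-1}$, independent of $(Y_1,\dots,Y_k)$. Since the $k$ subtrees are jointly independent, this gives the factorization
\[
\pr{M_m \ge (\gamma+\eps)m} = h_m^k, \qquad h_m := \pr{Y + M_{m-1} \ge (\gamma+\eps)m},
\]
so it suffices to prove matching bounds of order $e^{-\alpha\eps(1\pm\delta)m}$ on $h_m$. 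The heuristic preceding the statement is transparent from this decomposition: the cheapest way to force $M_m \ge (\gamma+\eps)m$ is for $Y$ itself to be of order $\eps m$ on each of the $k$ first-level branches, costing $k\alpha\eps m$ in log-probability.

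For the lower bound, observe that if $Y \ge (1+\delta/2)\eps m$ and $M_{m-1} \ge (\gamma - \eps\delta/2)m$ (two independent events), then $Y + M_{m-1} \ge (\gamma+\eps)m$. The first event has probability at least $e^{-\alpha\eps(1+\delta)m}$ for $m$ large by the tail hypothesis on $Y$, and the second tends to $1$ by Theorem~\ref{thm:minbrw1} since $(\gamma-\eps\delta/2)m < \gamma(m-1)$ eventually. Raising to the $k$-th power yields the lower bound with an explicit constant $c$.

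For the upper bound, I split on the size of $Y$: for a parameter $\eta \in (0,\eps)$ to be chosen,
\[
h_m \le \pr{Y \ge (\eps-\eta)m} + \pr{M_{m-1} \ge (\gamma+\eta)m}.
\]
The first term is at most $e^{-\alpha(\eps-\eta)(1-o(1))m}$ by hypothesis, and for $\eta$ and the $o(1)$ small enough is $\le e^{-\alpha\eps(1-\delta)m}$. The main obstacle is the second term: it is itself a right-tail probability for $M_{m-1}$, and the theorem demands it decay at least as fast as $e^{-\alpha\eps(1-\delta)m}$. The plan is to apply the theorem recursively with $\eta$ in place of $\eps$, obtaining a bound of order $e^{-k\alpha\eta(1-\delta)m}$ on this second term, and then to balance $\eta$ against $\eps-\eta$. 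A direct one-step recursion is self-consistent only when $\delta \ge 1/k$ (the two exponents coincide at $\eta = \eps/(k+1)$, losing a factor $k/(k+1)$ per iteration), so closing the argument for arbitrarily small $\delta > 0$ is the main technical point: the idea is to iterate the split, or equivalently to decompose the BRW at a carefully chosen depth $L = L(\eps,\delta)$ of the tree, so that the $L$-fold composition amplifies the exponent of the residual term by a factor of $k^L$ and absorbs any fixed $\delta > 0$.
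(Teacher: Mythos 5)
Your lower bound is correct and matches the paper's argument: force the $k$ first-level increments to exceed $(1+\delta/2)\eps m$ and apply Theorem~\ref{thm:minbrw1} on the $k$ independent subtrees.

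The upper bound, however, has a genuine gap, and you have only partially located it. Iterating your one-step split cannot reach the claimed rate: if one assumes inductively $\pr{M_{m'}\ge(\gamma+\eps')m'}\le e^{-C\eps'm'}$ and feeds it through your inequality $h_m\le\pr{Y\ge(\eps-\eta)m}+\pr{M_{m-1}\ge(\gamma+\eta)m}$, the optimal choice $\eta=\alpha\eps/(\alpha+C)$ yields the improved constant $C'=k\alpha C/(\alpha+C)$, a map whose fixed point is $(k-1)\alpha$, not $k\alpha$. So the one-level recursion, iterated forever, saturates at the rate $(k-1)\alpha\eps$ and never delivers $k\alpha\eps(1-\delta)$ for small $\delta$. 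Your proposed repair --- decompose at a depth $L=L(\eps,\delta)$ --- does not close the gap either, for two reasons. First, with $L$ a constant the complementary event (all $k^L$ partial sums $S_u$ small at level $L$, forcing all $k^L$ i.i.d.\ subtree minima to be atypically large) has probability of order $c_0^{k^L}$ with $c_0<1$ fixed; this is a \emph{constant}, not decaying in $m$. Second, and more fundamentally, a union bound over the event that \emph{some} $u\in\cA^L$ has $S_u$ large produces only a single Chernoff factor $e^{-\alpha\eps m}$, and the factor $k$ in the exponent never appears. The paper fixes both at once. It takes $h=\lfloor 10\log_k m\rfloor$ growing with $m$, so that $c_0^{k^h}$ is super-polynomially small; and --- this is the key structural idea you are missing --- it splits on the \emph{two-stage} event ``for \emph{every} first-level child $p\in[k]$, there exists $u\in\cA^h$ with $S_{p\cdot u}>(1-\delta)\eps m$.'' On that event one can choose, one witness per $p$, $k$ nodes lying on \emph{disjoint} paths; their positions $S_{p\cdot u^p}$ are then independent, and the product of $k$ Chernoff bounds supplies exactly the factor $k$ in the exponent, at the affordable price of a union bound over the $k^{kh}=m^{O(1)}$ choices of witnesses. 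A single ``$\exists u$'' quantifier, at any depth, cannot produce this $k$.
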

\begin{proof}
We start with the lower bound. Consider the $k$ nodes connected to the root. Each one of these nodes is the root of a tree of depth $m-1$. In order to have $M_m\ge (\gamma+\eps) m$, it suffices that all $k$ (independent) trees have a minimum $M_{m-1}\ge (\gamma-\frac{\eps \delta}{2})m$ and all the steps between the root and its children are such that $Y_{1}, \dots, Y_k \geq (\eps+\frac{\eps \delta}{2})m$.
Therefore,
\begin{align*}
 \pr{M_m \geq (\gamma + \eps) m} 
&\geq \pr{M_{m-1} \geq (\gamma - \eps \delta/2) m}^k \cdot \pr{Y \geq (\eps + \eps \delta/2) m}^k \\
&\geq c \cdot e^{-k \alpha (\eps + \frac{\eps\delta}{2}) m+o(m)}\\
&\geq c \cdot e^{-k \alpha \eps(1 + \delta)m},
\end{align*}
for $m$ large enough,
by Theorem \ref{thm:minbrw1} and our assumption on the tail of $Y$.

We now prove the upper bound.
%
%
%
%
Let $h = \floor{10 \log_k m}$. We start by proving an exponential tail bound for $S_u$ where $u \in \cA^h$. Notice first that  
$$\exc{e^{\alpha (1-\delta) Y}} = \int_{0}^{\infty} \prc{e^{\alpha (1-\delta)Y} \geq t} dt = \int_0^{\infty} t^{-1/(1-\delta)+ o(1)} dt < \infty.$$
Thus, by Markov's inequality, we obtain
\[
\pr{S_u \geq (1-\delta) \eps m} \leq \exc{e^{\alpha (1-\delta)Y}}^{h} e^{-\alpha(1-\delta) \cdot (1-\delta)\eps m} \leq e^{-(1-\delta)^3\alpha \eps m},
\]
for $m$ large enough. We can now prove exponential tails for $M_m$. Using a decomposition according to the values of some nodes at level $h+1$, we obtain
\begin{align}\label{eq:decomp_mn}
\pr{M_m \geq (\gamma+\eps)m} 
&\leq \prc{\forall p \in [k], \exists u \in \cA^{h} : S_{p \cdot u} > (1-\delta)\eps m} \\
&\quad+\prc{\exists p \in [k], \forall u \in \cA^{h}: S_{p\cdot u}\le (1-\delta)\eps m \text{~and~} M_m\ge (\gamma+\eps)m}.\nonumber
\end{align}
For the first event in \eqref{eq:decomp_mn} to hold, there must be $k$ paths $u^1,\dots, u^k$ such that, for the $k$ \emph{disjoint} paths $p\cdot u^k$ at level $h+1$, the values are rather large:
\begin{align}\label{eq:decomp_mn_bad}
\prc{\forall p \in [k], \exists u \in \cA^{h} : S_{p \cdot u} > (1-\delta)\eps m}
&\le \pr{\forall p \in [k], \exists u \in \cA^{h} : S_{p \cdot u} > (1-\delta)\eps m} \nonumber\\
&\le k \cdot k^{kh} e^{-k (1-\delta)^3 \alpha \eps m}.
\end{align}
On the other hand,
\begin{align*}
&\prc{\exists p \in [k], \forall u \in \cA^{h}: S_{p\cdot u}\le (1-\delta)\eps m \text{~and~} M_m\ge (\gamma+\eps)m}\\
&= \prm{\exists p \in [k], \forall u \in \cA^{h} : \left[ S_{p \cdot u} \leq (1-\delta)\eps m \; \text{ and } \; \forall v \in \cA^{m-h-1} : S_{p \cdot u \cdot v} \geq (\gamma+\eps)m \right] } \\
&\le \pr{M_{m-h-1} \geq (\gamma+\delta \eps)m}^{k^h} \\
&\le c_0^{k^h}
\end{align*}
for $m$ large enough and some constant $c_0< 1$ (using Theorem \ref{thm:minbrw1}). As a consequence, for fixed $\delta,\epsilon>0$, we have for all $m$ large enough
\begin{align*}
\pr{M_m\ge (\gamma+\epsilon)m} 
&\le k^{kh+1} e^{-k (1-\delta)^3 \alpha \eps m} +c_0^{k^h}.
\end{align*}
By our choice for $h=\floor{10\log_k m}$ we have $k^h\ge m^{10}/k$. Finally, since $\delta>0$ was arbitrary, 
 the desired upper bound follows.\end{proof}

At this point, we should comment on tail bounds for minima in branching random walks. Although the estimates for the upper tail of $M_m$ in Theorem~\ref{thm:BRW-righttail} are tight, we must mention that the strength of the result does not really compare to recent results on the precise location of minima in branching random walk. Indeed, Theorem~\ref{thm:BRW-righttail} only provides decent estimates for $\prc{M_m\ge \gamma m +t}$ for \emph{positive} $t$, while $\exc{M_m}=\gamma m - \beta \log m +O(1)$ \cite{AR09}; for the study of branching random walks the tail bounds of interest are actually those for $\prc{M_m \ge \exc{M_m}+ t}$ or $\prc{M_m \ge \gamma m - \beta \log m +t}$. See also the related results about tightness and weak convergence for $M_m$ \cite{Bramson1978a,HuSh2009,Bachmann2000,Aidekon2011,BrZe2009}. Equivalent results for the height of random trees (binary search trees and $m$-ary search trees) were proved in \cite{Reed2003,Drmota2003,ChDr2006}. Similar comments apply for our estimates on the left tail that follow.

\medskip
The left tail for the minimum at level $m$ in a branching random walk is essentially governed by the level $m$: To change the minimum $M_m$, it suffices to change one single of the $k^m$ paths of length $m$ and this way of proceeding is essentially optimal.

\begin{theorem}[Left tail]
\label{thm:BRW-lefttail}
Suppose that there exists $\lambda<0$ such that $\exc{e^{\lambda Y}}<\infty$. Then, for any $\delta>0$, there exists constants $c,c'$ such that 
\[
ck^m e^{-m (\Lambda^\star(\gamma-\eps)+\delta)} \leq \pr{M_m \leq (\gamma - \eps) m} \leq c'k^m e^{- m\Lambda^\star(\gamma-\eps) }.
\]
\end{theorem}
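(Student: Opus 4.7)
For each $u\in\cA^m$, $S_u$ is a sum of $m$ independent copies of $Y$, so the Chernoff bound yields $\pr{S_u\le(\gamma-\eps)m}\le e^{-m\Lambda^\star(\gamma-\eps)}$ (the optimizing $\lambda$ lies in $\{\lambda<0\}$ since $\gamma-\eps<\ex Y$ and $\exc{e^{\lambda Y}}<\infty$ for some $\lambda<0$). A union bound over the $k^m$ leaves yields the upper bound with $c'=1$.

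\textbf{Lower bound.} The single-path Cram\'er lower bound gives only $\pr{S_u\le(\gamma-\eps)m}\ge e^{-m(\Lambda^\star(\gamma-\eps)+\delta/2)}$, falling short of the target by the factor $k^m$, so the main content of the proof is to exploit the branching structure. My plan is a two-scale construction. Pick $c\in(\gamma,\ex Y)$ close to $\gamma$ so that $\Lambda^\star(c)<\log k$, and set $j=\lfloor\eta m\rfloor$ with $\eta>0$ small. A Biggins-type second-moment argument on $|\{u\in\cA^j:S_u\le cj\}|$ shows that, with probability bounded away from zero, this set has cardinality at least $N_j\ge k^je^{-j(\Lambda^\star(c)+\delta')}$. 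Conditioning on the $\sigma$-field at level $j$, the subtrees rooted at distinct such particles are independent $(k,m-j)$-BRWs; for each, it suffices that the minimum of the subtree falls below $\beta(m-j)$ with $\beta=\bigl((\gamma-\eps)m-cj\bigr)/(m-j)\to\gamma-\eps$ as $\eta\to 0$. Invoking the theorem inductively on $m$ for the subtree yields a per-subtree probability $p_{m-j}\ge c\,k^{m-j}e^{-(m-j)(\Lambda^\star(\beta)+\delta')}$, so by independence
\[
\pr{M_m\le(\gamma-\eps)m}\ge \tfrac{1}{2}\min\!\bigl(N_j\,p_{m-j},\,1\bigr).
\]

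\textbf{Closing the bound.} Writing $\theta=c-(\gamma-\eps)>0$, the exponent inside $N_jp_{m-j}$ decomposes as $m\log k-j\Lambda^\star(c)-(m-j)\Lambda^\star(\beta)-m\delta'$. Taylor-expanding $\Lambda^\star(c)=\Lambda^\star(\gamma-\eps)+\lambda^*\theta+O(\theta^2)$ and $\Lambda^\star(\beta)=\Lambda^\star(\gamma-\eps)-\lambda^*\theta j/(m-j)+O((\theta j/m)^2)$ with $\lambda^*=\Lambda^{\star\prime}(\gamma-\eps)$, the two first-order terms in $\theta$ cancel, leaving a residual of order $O(\eta\theta^2 m)$. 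Choosing $\theta$ and then $\eta$ small enough depending on $\delta$ makes this residual smaller than $m\delta/2$, and $N_jp_{m-j}\ge c\,k^m e^{-m(\Lambda^\star(\gamma-\eps)+\delta)}$ follows. The induction on $m$ closes because the base case for bounded $m$ is trivial and the inductive step preserves the form of the bound. The main obstacle is the careful bookkeeping of the various small error terms (from Cram\'er's sharp lower bound, the Biggins concentration at level $j$, and the Taylor expansion of $\Lambda^\star$) to verify they all absorb into the single $\delta$-slack of the statement; a naive direct Paley--Zygmund on $N_m=|\{u\in\cA^m:S_u\le(\gamma-\eps)m\}|$ does \emph{not} close because pairs with long common prefix contribute an excess variance that can exceed $e^{m\delta}\ex{N_m}$, which is why the two-scale split is essential.
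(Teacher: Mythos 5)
Your union bound with Chernoff matches the paper exactly; this part is fine.

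\textbf{Lower bound.} Your plan is genuinely different from the paper's, and you are right that a naive Paley--Zygmund on $N_m=|\{u\in\cA^m:S_u\le(\gamma-\eps)m\}|$ does not give the sharp rate, for exactly the reason you cite (pairs with a long shared prefix can tilt the common block more aggressively, inflating the second moment by an exponential factor). However, the recursive step as written has a real gap. To lower-bound $p_{m-j}$ you invoke the theorem ``inductively on $m$'' for the subtree, but the subtree target is $\beta=(\gamma-\eps)-\theta j/(m-j)$, i.e.\ a \emph{strictly larger} deviation $\eps'=\gamma-\beta>\eps$. So the inductive step does not preserve the form of the bound: it changes the statement (from $\eps$ to $\eps'$). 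An honest induction would have to be on $m$ \emph{and} over a ladder $\eps_0<\eps_1<\dots$ with $\eps_{k+1}-\eps_k\approx\theta\eta/(1-\eta)$, with constants $c_k$ and slacks $\delta_k$ at each rung; the recursion bottoms out only because once $\eps_K>\gamma$ the LHS is $0$ and $\Lambda^\star(\gamma-\eps_K)=+\infty$ makes the bound vacuous (for $Y\ge0$). You don't observe this, and you don't track how $c$ and $\delta$ propagate down the ladder — this is precisely where your ``main obstacle'' sentence waves its hands. Without this explicit structure the argument is circular. (There is also a secondary point: the level-$j$ Biggins step produces at least $N_j$ good particles only with probability $p_0$ bounded below, so a factor $p_0$ must be carried through each rung, which is fine since there are $O(\gamma/(\theta\eta))$ rungs, but it should be stated.)

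The paper avoids all of this with a single-scale construction: fix a block length $L$, declare a node at level $(i+1)L$ \emph{good} if its ancestor at level $iL$ is good and the block increment is $\le(\gamma-\eps)L$, so the good nodes form a Galton--Watson process $\{Z_i\}$ with mean $\mu=k^L\pr{\sum_{j\le L}Y_j\le(\gamma-\eps)L}<1$. The Chung--Erd\H{o}s inequality together with the explicit mean/variance formulas for subcritical GW gives $\pr{Z_i>0}\ge C\mu^i$, and choosing $L$ large enough that $\mu\ge k^Le^{-L(\Lambda^\star(\gamma-\eps)+\delta)}$ (Cram\'er) yields the claimed bound directly, with no recursion, no $\eps$-ladder, and no supercritical intermediate step. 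In short: your idea can likely be made rigorous, but you need the $\eps$-ladder bookkeeping and the $\eps>\gamma$ termination explicitly; the paper's GW embedding at a fixed block scale is both simpler and self-contained, and you should consider it instead.
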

\begin{proof}
The upper bound follows easily from the union and Chernoff's bound \cite{Che52,DZ98}:
\begin{align*}
\pr{M_m \le (\gamma -\eps)m } 
&\le k^m \pr{\sum_{i=1}^m Y_i \le (\gamma-\eps)n}\\
&\le k^m e^{-n\Lambda^\star(\gamma-\eps)}.
\end{align*}

The lower bound is proved using a branching process argument. Let $L\ge 1$ be an arbitrary integer to be chosen later. The potential individuals of our branching process are the nodes of $\mathcal U$ at levels $iL$, $i\ge 0$. A node $u$ is called \emph{good} if either it is the root (lies at level $0$), or it lies at level $(i+1)L$ for some $i\ge 0$, its ancestor $v$ at level $iL$ is good and $S_u-S_v\le (\gamma-\eps)L$. Let $Z_i$ denote the number of good nodes at level $iL$ in the tree ; $\{Z_i, i\ge 0\}$ is a Galton--Watson process. Clearly, if there is a good node at level $iL$, i.e., $Z_i>0$ then $M_{iL}\le (\gamma-\eps) iL$. As a consequence,
\begin{equation}\label{eq:GW_embedding}
\pr{M_{iL} \le (\gamma-\eps){iL}}
\ge \pr{Z_i>0}.
\end{equation}
By the second moment method \cite{AlSp2008}, more precisely the Chung--Erd\H{o}s inequality \cite{CE52}, we have
\begin{equation}\label{eq:chung_erdos}
\pr{Z_i>0}\ge \frac{\ex{Z_i}^2}{\ex{Z_i^2}}=\frac{\ex{Z_i}^2}{\var{Z_i}+\ex{Z_i}^2}.	
\end{equation} 
For the Galton--Watson process $\{Z_i, i\ge 0\}$, one has
$$\ex{Z_i}=\mu^i\qquad \text{and}\qquad \var{Z_i}=\sigma^2 \frac{1-\mu^i}{1-\mu}\mu^{i-1},$$
where $\mu:=\ex{Z_1}$ and $\sigma^2:=\var{Z_1}$ \cite{AtNe1972}. 
We now move on to choosing $L$ to obtain a good lower bound on the right-hand side in (\ref{eq:chung_erdos}). The mean number of children of an individual is
$$
\mu=\ex{Z_1}= k^L \pr{\sum_{j=1}^L Y_j\le (\gamma-\eps)L} = k^L e^{-L \Lambda^\star(\gamma-\eps) +o(L)},
$$
as $L\to\infty$. Note that since $\eps>0$, we have $\mu<1$ for any $L\ge 1$. So, for any $\delta>0$ there exist $L$ large enough that 
\begin{equation}\label{eq:mean_progeny}
\mu\ge k^L \cdot e^{-L (\Lambda^\star(\gamma-\eps)+\delta)}.
\end{equation}
Fixing this value for $L$, and since $\mu<1$, (\ref{eq:chung_erdos}) yields 
$\pr{Z_i>0} \ge C \mu^i,$
for some fixed constant $C>0$ independent of $i$. More precisely, together with (\ref{eq:GW_embedding}), we obtain
\begin{align*}
\pr{M_{iL} \le (\gamma-\eps){iL}}
& \ge C k^{iL} e^{-iL (\Lambda^\star(\gamma-\eps)+\delta)},
\end{align*}
which proves the lower bound for all $m= iL$, for some $i\ge 0$. Finally, when $m=iL+r$ for $r\in \{1,\dots,L-1\}$ it suffices that $M_{iL}\le (\gamma-\eps)iL$ and $\sum_{j=1}^r Y_j \le (\gamma-\eps)r$, where $Y_j$, $j\ge 1$ are i.i.d.\ copies of $Y$. Since 
$$\inf_{1\le r<L} \pr{\sum_{i=1}^r Y_i \le (\gamma-\eps) r}>0,$$
the result follows easily for general integers $m$.
\end{proof}

\noindent\textsc{About the limiting constants for the dag model.} Before proceeding to the analysis of the circuit model, we use the tail bounds we have just devised to provide rough arguments that explain the values of the constants in our laws of large numbers. (The symbol $\approx$ is used in a very informal way.) The first main idea is that the dependence is small enough that the upper bound given by the union bound essentially yields the correct constant. By Theorem~\ref{thm:BRW-lefttail} we (should) have
\begin{align*}
\pr{D_n\ge c\log n}
& \approx \prc{M_{\floorc{c\log n}} \le \log n}\\
& \approx k^{c\log n} \cdot e^{-c\log n \Lambda^\star(1/c)}\\
& \approx n^{c(\log k - \Lambda^\star(1/x))}.
\end{align*}
In particular, we have $\lim_{n \to \infty} \pr{D_n\ge c\log n} = 0$ for every $c$ such that $\Lambda^\star(1/c)>\log k$. This suggests that  we might have $D_n \sim \lambda_k \log n$ in probability for $\lambda_k=\sup\{x: \Lambda^\star(1/c)<\log k\}$. The proof of this fact is the topic of Section~\ref{sec:typical_distance}. Similarly, using Theorem~\ref{thm:BRW-righttail} we obtain
\begin{align*}
\pr{\min_{x\ge n/2} D_x \le c \log n} 
& \le n \pr{D_{n/2}\le c \log n}\\
& \approx \pr{M_{\floorc{c\log n}} \ge \log (n/2)}\\
& \approx n e^{-k\alpha c \log n (1/c -\gamma)}\\
& \approx n^{1-k\alpha (1-c\gamma)},
\end{align*}
so that it should be the case that 
$$\frac{\min_{n/2 \leq x \leq n}D_x}{\log n} \to \left(1-\frac 1{k\alpha}\right)\lambda_k\quad \text{in probability}.$$ 
We prove this formally (when the limit constant above is non-negative; $\min_x D_x$ is non-negative) in Section~\ref{sec:minimum_distance}. In a similar way, we can see that Theorem~\ref{thm:BRW-lefttail} leads to a correct guess that $\max_{x \leq n} \frac{D_x}{\log n} \to ke$ in probability when the attachment distribution is uniform \cite{TX96}. 



\section{\bf Longest paths in random $k$-\textsc{dag}s: typical distance $D_n$}
\label{sec:typical_distance}

We start by studying the length $D_n$ of the longest path from node $n$ to the root of the tree. The typical distance $D_n$ is studied using methods similar to the study of the typical shortest path distance in \cite{DJ09}. Let $\Lambda^\star$ denote the rate function associated with the random variable $Y=-\log X$. Define
\begin{align}\label{eq:lambdak}
\lambda_k & =\sup\left\{z \geq 1/\ex{-\log X}:\; \Lambda^\star(1/z)\le\log k \right\}.
%
%
%
%
\end{align}
Note that we will consider attachment distributions with bounded density $f$ and thus $\ex{-\log X} = -\int_{0}^{1} (\log x) f(x) dx < \infty$. Moreover, as $X < 1$, $\ex{-\log X} > 0$.
%
%
%
%
\begin{theorem}
\label{thm:typlongest}
Let $k \geq 1$. Suppose $X\in(0,1)$ has a bounded density. Then, the longest path from node $n$ to the root in a $k$-\textsc{sarrd} with attachment $X$ satisfies, as $n\to\infty$,
\[
\frac{D_n}{\log n} \to \lambda_k
\]
in probability.
\end{theorem}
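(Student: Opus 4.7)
The proof reduces to matching upper and lower bounds on $D_n/\log n$. For the upper bound, $D_n\ge m$ forces some exploration-tree string $s\in\mathcal{A}^{m-1}$ with $L(n, s)\ge 1$; since the floors only decrease values, $L(n, s)\le n\prod_{u\preceq s,\,u\ne\varnothing}X_u$. Writing $Y_u=-\log X_u$ and $m=\lceil c\log n\rceil$, a union bound over the $k^{m-1}$ strings together with Chernoff's inequality gives
\[
\pr{D_n\ge m}\le k^{m-1}\pr{\sum_{i=1}^{m-1}Y_i\le \log n}\le e^{(m-1)(\log k-\Lambda^\star(1/c))+o(m)}.
\]
For $c>\lambda_k$, the definition of $\lambda_k$ combined with the monotonicity of $\Lambda^\star$ on $(0,\ex{Y}]$ yields $\Lambda^\star(1/c)>\log k$, making the right-hand side $n^{-\Omega(1)}$.

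For the matching lower bound $\pr{D_n\ge(\lambda_k-\varepsilon)\log n}\to 1$, fix $c=\lambda_k-\varepsilon$, $m=\lfloor c\log n\rfloor$, and I would adapt the Galton--Watson embedding of Theorem~\ref{thm:BRW-lefttail} to the DAG exploration tree. Pick $L=L(\varepsilon)$ large and declare a node $u\in\mathcal{A}^{iL}$ \emph{good} if its ancestor $v\in\mathcal{A}^{(i-1)L}$ is good and the logarithmic drop over the block satisfies $\log L(n, v)-\log L(n, u)\le L/c$. Absent DAG collisions, the counts $Z_i$ of good nodes dominate a supercritical Galton--Watson process with offspring mean
\[
\mu_L=k^L\pr{\sum_{j=1}^L Y_j\le L/c}=e^{L(\log k-\Lambda^\star(1/c))+o(L)}>1
\]
for $L$ large, since $\Lambda^\star(1/c)<\log k$. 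Applying the Chung--Erd\H os inequality to the first two moments of $Z_{\lfloor m/L\rfloor}$ gives survival, and hence a path of length $\ge m$, with probability bounded away from $0$; to upgrade this to $1-o(1)$ I would iterate the construction starting from each of $\Theta(\log n)$ pairwise distinct DAG-ancestors of $n$ obtained by a preliminary exploration of depth $\Theta(\log\log n)$, whose sub-explorations are conditionally independent (using the bounded density to rule out preliminary collisions), so that the probability that all branchings die out is $o(1)$.

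The main new difficulty relative to the plain branching-random-walk analysis is the dependence between exploration-tree strings created by DAG collisions: distinct tree-strings $s\ne s'$ may satisfy $L(n, s)=L(n, s')$, after which their subtrees share all randomness. I would address this inside the second-moment bound on $Z_i$ by splitting $\ex{Z_i^2}=\sum_{u, u'}\pr{u, u'\text{ both good}}$ first by the length of the longest common tree-prefix of $u$ and $u'$, and then among tree-disjoint pairs by the first subsequent level at which the two DAG trajectories actually coincide. The bounded-density hypothesis on $X$ is crucial: conditional on two sub-paths sitting on a common DAG node of label $\ell$, the probability that their next parent picks coincide is $O(1/\ell)$, and since good paths at level $iL$ keep labels $\ge n e^{-iL/c}\ge 1$ throughout the range $iL\le c\log n$, a careful summation over $(i, j)$ bounds the collision contribution by a constant multiple of $\ex{Z_i}^2$, yielding $\ex{Z_i^2}\le C\ex{Z_i}^2$ and thus the required Chung--Erd\H os estimate. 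This bookkeeping realises precisely the ``intricate dependence between paths from different nodes'' flagged in Section~\ref{sec:def} and constitutes the main technical step beyond the branching-random-walk arguments of Section~\ref{sec:BRW}.
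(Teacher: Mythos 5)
Your upper bound is identical in substance to the paper's Lemma~\ref{lem:typupperb}: union bound over the $k^{m}$ exploration strings plus Chernoff on $Y=-\log X$, using monotonicity of $\Lambda^\star$ to turn $c>\lambda_k$ into $\Lambda^\star(1/c)>\log k$. That part is fine.

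The lower bound is where you diverge from the paper, and where I see a genuine gap. The paper does \emph{not} run a Galton--Watson second-moment argument inside the DAG. Instead it navigates \emph{one} path greedily: work in an ``ideal tree'' with fresh independent $X'_s$ attached to every string $s$ (so there are no collisions by construction), repeatedly look $\ell$ levels ahead, and jump to the ancestor with the largest label. Renewal theory (not a branching-process survival bound) shows that after $q=\lceil(1-\eps)\lambda_k\log n/\ell\rceil$ rounds the greedy label $V_q$ is still $\geq n^{\eps/4}$ with probability $1-o(1)$. Only \emph{then} does the paper confront the DAG: it couples the ideal tree to the real \textsc{sarrd} and must only rule out collisions along the $O(qk^{\ell})$ tree vertices actually visited by the greedy path. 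Because those labels are all $\geq n^{\eps/4}$, each collision has probability $O(n^{-\eps/4})$, and a union bound over the polylogarithmically many terms finishes. This is why the greedy ``keep the label large'' step is not a cosmetic choice: it is exactly what makes the collision analysis a one-line union bound.

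Your proposal explores an exponentially large random sub\textsc{dag} (the good-descendant Galton--Watson cloud), and the assertion that a level-by-level accounting yields $\ex{Z_i^2}\le C\,\ex{Z_i}^2$ is not something that follows from the observation that a single merge has probability $O(1/\ell)$. Two issues in particular. First, your own definition of ``good'' allows labels to drop all the way to $n\,e^{-iL/c}$, which approaches $1$ as $iL\to c\log n$; near the terminal levels the per-step merge probability is $\Theta(1)$, not $n^{-\Omega(1)}$, so the sum over merge times does not have the geometric decay you are implicitly invoking. You could try to repair this by stopping the process once labels reach $n^{\eps}$, but then you still face the second issue: you have $\mu^{2i}$ pairs at level $iL$, and once two tree-disjoint strings land on the same DAG node their entire sub-explorations are deterministically identified, so a colliding pair contributes to $\ex{Z_i^2}$ at the ``diagonal'' rate $\ex{Z_{i-j}}$, not at rate $\pr{\text{good}}^2$. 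Whether the total such contribution stays $O(\ex{Z_i}^2)$ depends on a nontrivial interplay between $\mu$, $k$, $\Lambda^\star$ and the label decay; you have not carried out that computation and it is far from automatic. Finally, your boost from ``survival with probability $\ge c_0$'' to $1-o(1)$ by planting $\Theta(\log n)$ starting points at depth $\Theta(\log\log n)$ does not account for cross-collisions between the different sub-explorations, which live in the same \textsc{dag} and share the underlying $X_{x,p}$ whenever two of them hit a common node; ``conditionally independent'' is not justified. The paper sidesteps all of this by getting $1-o(1)$ directly from the renewal LLN for the single greedy path, so no boosting step is needed at all.

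In short: same upper bound; for the lower bound you substitute a genuine second-moment/Galton--Watson argument inside the \textsc{dag} for the paper's greedy-plus-coupling argument. The two are conceptually different routes, but as written your route has unfilled holes precisely at the place the paper goes to some lengths to avoid — controlling collisions across an exponentially large set of correlated paths, especially once labels get small. I'd recommend either (a) restricting to labels $\geq n^{\eps}$, explicitly splitting $\ex{Z_i^2}$ by first merge time and doing the geometric-sum estimate, and dealing honestly with cross-collisions in the boost, or (b) switching to the paper's single-path greedy strategy, which reduces the collision bookkeeping to a polylogarithmic union bound.
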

The theorem follows from Lemma \ref{lem:typupperb} and Lemma \ref{lem:typlowerb} below. 
From Theorem~\ref{thm:typlongest}, we easily obtain the asymptotics for the typical distance in the uniform circuit model discussed by \citet{AGM99} and \citet{TX96}. Indeed, if $X$ is uniform $(0,1)$, then the parents of a node $i$ are i.i.d.\ uniform in $\{0,1,\dots,i-1\}$. In this case, $-\log X\sim \text{Exponential}(1)$ and $\Lambda^\star(z)=z-1-\log z$. Then $\lambda_k$ is the only solution of the equation in $z$
\[
\Big(\frac{ke}{z}\Big)^z = e
\]
that is at least $1$. Numerical values are presented in Table~\ref{tab:num_uniform}. 

\begin{table}[htb]
\begin{tabular}{c | c c c c c}
$k$ & 2 & 3 & 4 & 5 & 10\\ \hline
$\lambda_k$ & 4.311070407\dots & 7.080786915\dots & 9.820440021\dots & 12.55049054\dots & 26.16346184\dots \\
\end{tabular}
\caption{\label{tab:num_uniform}Some numerical values for the constant $\lambda_k$ for the case of uniform recursive circuits.}
\end{table}


\subsection{\bf Upper bound on $D_n$}
\begin{lemma}\label{lem:typupperb}
For any $c>\lambda_k$, there exists $C$ and $\eta>0$ such that for all $n$ large enough, we have 
$$\pr{D_n \geq c \log n} \le C n^{-\eta}.$$
\end{lemma}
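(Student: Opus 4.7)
The plan is a straightforward union bound over the $k^{m-1}$ paths of length $m-1$ in the exploration tree $\mathcal U$, with $m = \lceil c\log n\rceil$, combined with a Chernoff estimate along each individual path. Since labels strictly decrease along ancestor edges, any directed path of length at least $m$ from $n$ to $0$ must have its $(m-1)$-st vertex at a label $\geq 1$. In terms of the exploration tree this reads
\[
\{D_n \geq m\} \subseteq \bigcup_{s \in \mathcal A^{m-1}} \{L(n,s) \geq 1\}.
\]

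For a fixed $s \in \mathcal A^{m-1}$, I would identify the label sequence $L(n,\ul s_0), L(n,\ul s_1),\ldots, L(n,\ul s_{m-1})$ along $s$ with the Markov chain $\ell_0 = n$, $\ell_{i+1} = \lfloor \ell_i \wt X_{i+1}\rfloor$, where $\wt X_1, \wt X_2,\ldots$ are i.i.d.\ copies of $X$. The reason is that the $X$-variables $X_{L(n,\ul s_i),\, s_{i+1}}$ consumed along $s$ are indexed at pairwise distinct $(x,p)$ pairs---the first coordinate strictly decreases along the path---so they are independent. Since $\lfloor y\rfloor \leq y$, induction yields $\ell_{m-1}\leq n\prod_{i=1}^{m-1}\wt X_i$, so $\{\ell_{m-1}\geq 1\} \subseteq \{\sum_{i=1}^{m-1} Y_i \leq \log n\}$ with $Y_i = -\log \wt X_i$ i.i.d.\ copies of $Y = -\log X$. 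Because $c > \lambda_k \geq 1/\ex{Y}$, the target average $\log n/(m-1) \to 1/c$ lies strictly below $\ex{Y}$, and Chernoff's bound together with continuity of $\Lambda^\star$ at $1/c$ gives $\pr{\ell_{m-1} \geq 1} \leq n^{-c\Lambda^\star(1/c) + o(1)}$.

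Combining the inclusion with the single-path bound and the trivial estimate $k^{m-1} = n^{c\log k + o(1)}$ produces
\[
\pr{D_n \geq m} \leq n^{c(\log k - \Lambda^\star(1/c)) + o(1)}.
\]
By the supremum definition of $\lambda_k$ and the continuity of $z\mapsto \Lambda^\star(1/z)$ on $(1/\ex{Y},\infty)$, any $c > \lambda_k$ satisfies $\Lambda^\star(1/c) > \log k$, so the exponent is a strictly negative constant and the desired $Cn^{-\eta}$ bound follows. The only step with any real subtlety is the Markov-chain identification in the second paragraph: it relies on the strict monotonicity of labels along a fixed path, which guarantees that the variables $X_{x,p}$ consulted are drawn at distinct indices and so are independent. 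The floor inequality, the Chernoff estimate, and extracting the negative exponent from the supremum characterization of $\lambda_k$ are all routine.
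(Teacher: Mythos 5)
The proposal is correct and takes essentially the same approach as the paper: a union bound over the $k^{\Theta(\log n)}$ strings of the exploration tree combined with a Chernoff bound for the label product along a single path, followed by reading off a strictly negative exponent from the supremum definition of $\lambda_k$. The only difference is cosmetic---the paper packages the per-path estimate via the number of hops $R_n$ of a uniformly random path, whereas you sum directly over strings $s$; your justification of the i.i.d.\ structure along a fixed path (labels strictly decrease, so the consumed $X_{x,p}$ are at distinct indices) is precisely the observation that makes both versions work.
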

\begin{proof}
To simplify the notation, we prove  $\pr{D_n \geq c \log n + 2} \leq C n^{-\eta}$ for all $c > \lambda_k$, which is an equivalent statement. Let $R_n$ denote the number of hops to the root for a random path from $n$, i.e., a path that choses a uniformly random edge at every step. By the union bound, we get
\begin{equation}
\label{eq:typupperb_ub}
\pr{ D_n \geq c \log n + 2} \leq k^{c \log n + 2}\cdot \pr{R_n \geq c \log n + 2}.
\end{equation}
Let $X_1, X_2,\dots$ be i.i.d.\ copies of $X$. Then, to bound the right hand side, we let $t=\ceil{c\log n}$ and apply Markov's inequality
\begin{align}
\label{eq:typupperb_ch}
\pr{R_n \geq c \log n + 2} 
&\leq
\pr{R_n > t} \nonumber \\
&\leq \pr{ nX_1 \dots X_{t} \geq 1 } \notag \\
	&\leq \inf_{\lambda \geq 0} n^\lambda \exc{ X_1^\lambda \dots X_{t}^\lambda } \notag \\
	&= \inf_{\lambda \geq 0} n^{\lambda} \exc{X^\lambda}^t \notag \\
	&= \inf_{\lambda \geq 0} \exp{ \lambda\log n + \Lambda(-\lambda)t \Big.} \notag \\
 	&\leq \inf_{\lambda \geq 0} \exp{ \lambda\log n + \Lambda(-\lambda) c \log n  \Big.}, 
		&\text{(as $\Lambda(-\lambda) \leq 0$)} \notag \\
	&\leq \exp{ -\sup_{\lambda \geq 0} \left\{-\frac{\lambda}{c}  - \Lambda(-\lambda)\right\} c\log n} \notag \\ 
	&= \exp{ -c \Lambda^\star(1/c) \log n }.
\end{align}
Combining \eqref{eq:typupperb_ub} and \eqref{eq:typupperb_ch}, we get
\begin{align*}
\pr{D_n \geq c \log n + 2} 
&\leq k^2 \exp{ c \log n \left( \log k - \Lambda^\star(1/c) \right) }\\
&= k^2 n^{c (\log k-\Lambda^\star(1/c))}.
\end{align*}
As $\Lambda^{\star}$ is a decreasing function on $(-\infty, \ex{-\log X})$ and $c > \lambda_k$, we have $\Lambda^\star(1/c) > \log k$, which completes the proof.
\end{proof}

\subsection{\bf Lower bound on $D_n$}
The objective of this section is to prove the following lemma.
\begin{lemma}
\label{lem:typlowerb}
For any fixed $\eps > 0$, we have
\[
\pr{D_n < (1-\eps) \lambda_k \log n} \to 0,
\]
as $n\to\infty$. Furthermore, if there exists $\lambda>0$ such that $\exc{X^{-\lambda}}<\infty$, then
there exists constants $C$ and $\eta>0$ such that for all $n$ large enough
\begin{equation}\label{eq:Dn_tail}
\pr{D_n < (1-\eps) \lambda_k \log n} \le C n^{-\eta}.
\end{equation}
\end{lemma}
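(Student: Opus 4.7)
The strategy is a second-moment argument on the number
\[
Z := |\{u \in \cA^m : L(n,u) \geq 1\}|
\]
of depth-$m$ strings whose path from $n$ has not yet reached the root at depth $m := \ceil{(1-\eps)\lambda_k \log n}$, paralleling the Chung--Erd\H{o}s / Galton--Watson argument from Theorem~\ref{thm:BRW-lefttail}. Each $u \in \cA^m$ with $L(n,u) \ge 1$ witnesses a path of length $\ge m$ from $n$ to $0$ (extend arbitrarily from the surviving node if needed), so
\[
\pr{D_n < m} \leq \pr{Z = 0} \leq \var{Z}/\ex{Z}^2,
\]
and the task reduces to matching the first and second moments of $Z$.

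Write $c := (1-\eps)\lambda_k$, so $m \sim c \log n$ and, by the definition of $\lambda_k$, $\Lambda^\star(1/c) < \log k$. Along any fixed $u \in \cA^m$, the labels strictly decrease (since $X < 1$) and are therefore pairwise distinct, so the recursion $L(n,\ul u_{i+1}) = \floor{L(n,\ul u_i)\,X_i^{(u)}}$ involves i.i.d.\ copies $X_i^{(u)}$ of $X$. A telescoping bound absorbs the floor slack into an additive error of at most $m$, giving the deterministic inequality $L(n,u) \ge n\prod_{i=1}^m X_i^{(u)} - m$. Hence $\{L(n,u)\ge 1\}$ is implied by $\{\sum_i(-\log X_i^{(u)}) \le \log n - \log(m+1)\}$, and since $\log(m+1) = o(\log n)$, Cram\'er's large-deviations lower bound --- valid because $\Lambda$ is finite on $(-\infty,0]$ (automatic since $Y = -\log X \ge 0$) --- yields
\[
\pr{L(n,u)\ge 1} \ge \exp(-m\Lambda^\star(1/c) + o(m)),
\]
so $\ex{Z} \ge n^{c(\log k - \Lambda^\star(1/c)) + o(1)}$, diverging polynomially in $n$.

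For the second moment, group pairs $(u,v) \in \cA^m \times \cA^m$ by common-prefix length $j := |u \wedge v|$. Conditioning on the label at the split and setting aside a small collision event (the two post-split branches visiting a common node), the two sub-paths evolve as independent depth-$(m-j)$ DAG explorations from that label. Bounding each conditional survival probability from above by Cram\'er and summing the $(k-1)k^{2m-j-1}$ pairs at each $j < m$, the dominant contribution comes from $j = O(1)$ and gives $\ex{Z^2} \le (1+o(1))\ex{Z}^2$, so $\pr{D_n < m} \to 0$. Under the additional assumption $\ex{X^{-\lambda}} < \infty$, the small-value tail $\pr{X < x} \le C x^\lambda$ upgrades the $o(1)$ to $n^{-\Omega(1)}$: by a union bound, with probability $1 - n^{-\Omega(1)}$ all visited labels along a typical depth-$m$ path stay above $n^{1-o(1)}$, so both collision contributions and the $j \ge 1$ terms in the second-moment sum are polynomially small, yielding the polynomial bound~\eqref{eq:Dn_tail}.

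The main obstacle is exactly this DAG-specific departure from a pure branching random walk: two strings with disjoint prefixes may share $X$-values whenever their labels collide at some intermediate node, creating correlations absent from the BRW second-moment computation. Controlling these collisions quantitatively --- which is precisely what the small-value tail of $X$ supplied by the moment hypothesis $\ex{X^{-\lambda}} < \infty$ is for --- is what promotes the convergence-in-probability statement into the polynomial rate~\eqref{eq:Dn_tail}.
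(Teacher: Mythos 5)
The second-moment strategy is a genuinely different route from the paper's, which instead builds a \emph{single} greedy path by $\ell$-step maximization over blocks, uses a renewal/Chernoff argument to show this path keeps its label above $n^{\eps/4}$ for $q=\ceil{(1-\eps)\lambda_k\log n/\ell}$ blocks, and then couples this one path with the real \textsc{dag}, bounding collisions only for the $O(q\,k^{2\ell})$ strings visited by the greedy exploration. Unfortunately, your proposal contains a gap that already appears in the ideal branching-random-walk model, before any \textsc{dag}-specific collisions enter.

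The central claim $\ex{Z^2}\le(1+o(1))\ex{Z}^2$ is false. Grouping ordered pairs $(u,v)\in\cA^m\times\cA^m$ by common-prefix length $j$ and computing (for the idealized BRW) gives, up to $e^{o(m)}$ corrections,
\[
\ex{Z^2}\asymp \Big(1-\tfrac1k\Big)\,\ex{Z}^2\sum_{j=0}^{m-1}\Big(\tfrac{e^{\Lambda^\star(1/c)}}{k}\Big)^j
\;\asymp\; \frac{1-1/k}{1-e^{\Lambda^\star(1/c)}/k}\,\ex{Z}^2.
\]
Since $\Lambda^\star(1/c)>0$ whenever $c<\lambda_k$ with $1/c<\ex{-\log X}$, the prefactor is a constant strictly greater than $1$. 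The Chung--Erd\H{o}s bound then yields only $\pr{Z>0}\ge\ex{Z}^2/\ex{Z^2}\ge 1/C$ for some $C>1$, not $\pr{Z=0}\to 0$. To upgrade this, you would need a further idea — a barrier truncation on the BRW trajectories, or a Galton--Watson/boosting argument over the $k^L$ level-$L$ subtrees — and you supply neither. Note the paper's Theorem~\ref{thm:BRW-lefttail} faces the same obstruction and resolves it with an $L$-block Galton--Watson embedding, and its proof of Lemma~\ref{lem:typlowerb} avoids the issue entirely via the greedy construction plus the renewal theorem.

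Two further points. First, the assertion that ``with probability $1-n^{-\Omega(1)}$ all visited labels along a typical depth-$m$ path stay above $n^{1-o(1)}$'' is wrong: for $m$ close to $\lambda_k\log n$, any surviving path necessarily has $\sum_{i\le m}(-\log X_i)$ close to $\log n$, so its terminal labels are $n^{o(1)}$, not $n^{1-o(1)}$. This matters because collision probabilities scale like $b/(\text{label})$, and the late, small-label portion of the path is exactly where your proposed union bound breaks down. Second, the role of $\exc{X^{-\lambda}}<\infty$ is misidentified: in the paper this hypothesis is used to get a Chernoff bound on $\sum_j(-\log Z_j^{(\ell)})$ (the tail of $V_q$ in Lemma~\ref{lem:Vq}); collision control uses only the bounded density of $X$, and that control is feasible there precisely because the greedy construction keeps all relevant labels $\ge n^{\eps/4}$.
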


We start by describing the proof strategy and each subsection covers part of the proof.
Consider the set of ancestors of node $n$. These nodes form a directed acyclic graph where each node has out-degree $k$ except for the root. We start by considering the following model for the ancestor \textsc{dag}. Define a family of independent random variables $X'_{s}$ for all strings $s$ on $\cA=\{1,\dots, k\}$ that have the same distribution as $X$. For a string $s$ of length $\ell$, write $\ul s_0,\ul s_1,\ul s_2\dots, \ul s_\ell$ for the prefixes of $s$ of length $0,1,\dots, \ell$ respectively. The ancestor of node $n$ obtained by following the path $s$ is labeled
\[
L'(n, s) = \lfloor \dots \lfloor\lfloor n X'_{\ul s_1}\rfloor X'_{\ul s_2}\rfloor \cdots X'_{s} \rfloor.
\]
This defines a tree (indexed by the strings $s$) where distinct nodes can share the same label. In this ideal model, the length of the longest path that reaches a node labeled $0$ can be obtained with little effort from Theorem \ref{thm:minbrw1}.

In our \textsc{sarrd}, the labels actually correspond to nodes, and there is a unique node labeled $i$, for $i=0,\dots, n$.  The tree of ancestors of a node is then actually a \textsc{dag}, as the in-degree of a node can be more than one. In particular, this creates dependencies between the random variables $X$ along different paths even if they have no common prefix. 

In order to avoid having to deal with these dependencies, we use the following strategy to find a long path. Note that in our ideal setting, a path to the root corresponds to a path to a node labeled $0$. Starting at node $V_0 = n$, we look at all the ancestors of $n$ of order $\ell$ (i.e., $\ell$ jumps away from $n$) for some well-chosen large constant $\ell$. From all these possible paths, we pick the path (of length $\ell$) that reaches the node $V_1$ with the largest label. Then the same process is repeated for this node until a node with label $0$ is reached. This strategy defines a path, and we show that the length of this path can be made as large as $(\lambda_k - \eps) \log n$ with high probability. The advantage of using this method is that only a small portion of the tree is visited so it is easier to bound the ``collision'' probability. The path constructed for the ideal model can then be shown to be exactly the same in a real \textsc{sarrd} with high probability.

\subsubsection{\bf The ideal tree}
In this section, the objective is to obtain a good lower bound on the length of the longest path from node $n$ to a node of label $0$ in the ideal tree (Lemma~\ref{lem:Vq}). More precisely, let $V_j$ denote the \emph{label} obtained after $j$ steps (each one composed of $\ell$ jumps); the string defining the path from $n$ to the corresponding node  is denoted $S_j \in \cA^{j \ell}$. We have $V_0 = n, S_0 = \emptystring$ and for $j \geq 0$,
\begin{equation}
V_{j+1} = \max_{s \in \cA^{\ell}} L'(n, S_{j} \cdot s) \qquad \text{ and } \qquad S_{j+1} = S_{j} \cdot \argmax_{s \in \cA^{\ell}} L'(n, S_{j} \cdot s).
\end{equation}

The objective is now to show that starting from $n$ and after $q = \ceil{(1-\eps) \lambda_k \log (n)/\ell}$ steps of $\ell$ jumps, no node of label zero has been reached with probability going to $1$. If this happens, we clearly have a path of length at least $q\ell\ge(1-\eps)\lambda_k \log n$ between $n$ and a node labeled zero.

\begin{lemma}\label{lem:Vq}Suppose $\ex{-\log X} < \infty$. Let $\eps\in (0,1)$. For $q = \ceil{(1-\eps) \lambda_k \log (n)/\ell}$, we have
\[\prc{V_q\le n^{\eps/4}}\to 0,\]
as $n\to\infty$. Furthermore, if there exists $\lambda>0$ such that $\exc{X^{-\lambda}}<\infty$, then, there exists $\eta>0$ such that for all $n$ large enough, we have
\begin{equation}\label{eq:Vq_tail}
\prc{V_q \le n^{\eps/4}} \le n^{-\eta}.
\end{equation}
\end{lemma}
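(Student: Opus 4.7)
Writing $Y'_u=-\log X'_u$ and iterating the inequality $\lfloor aX\rfloor\ge aX-1$ along a path of length $\ell$, I would first record the one-step lower bound
\[
V_{j+1}\;\ge\;V_j e^{-M_\ell^{(j)}}-\ell,\qquad\text{where } M_\ell^{(j)}:=\min_{s\in\cA^\ell}\sum_{i=1}^\ell Y'_{S_j\cdot\ul s_i}.
\]
Whenever $V_je^{-M_\ell^{(j)}}\ge 2\ell$, the floor correction costs only $\log 2$, so that $\log V_{j+1}\ge\log V_j-M_\ell^{(j)}-\log 2$. Since the atoms $X'_u$ feeding into step $j+1$ have $|u|>|S_j|+\ell$, they are independent of every variable used in steps $0,\dots,j$; in particular $(M_\ell^{(j)})_{j\ge 0}$ is i.i.d., each marginal distributed as the generation-$\ell$ minimum $M_\ell$ of a branching random walk with branching $k$ and step $Y=-\log X$. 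Theorem~\ref{thm:minbrw1}, combined with the substitution $z\mapsto 1/z$ in \eqref{eq:lambdak}, then yields $\ex{M_\ell}/\ell\to 1/\lambda_k$.

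Next, pick $\delta>0$ small enough that $(1-\eps)(1+\delta\lambda_k)\le 1-\eps/2$, and choose a \emph{fixed} constant $\ell=\ell(\eps,\delta)$ large enough that both $\ex{M_\ell}\le(1/\lambda_k+\delta/2)\ell$ and $q\log 2\le(\eps/4)\log n$ (this amounts to $\ell\ge 4(1-\eps)\lambda_k\log 2/\eps$). Define the concentration event
\[
\mathcal C\;:=\;\Bigl\{\textstyle\sum_{j<q}M_\ell^{(j)}\;\le\;q(1/\lambda_k+\delta)\ell\Bigr\},
\]
and note that on $\mathcal C$ one has $\sum_{j<q}M_\ell^{(j)}\le(1-\eps/2)\log n$. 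A short induction on $j<q$ shows that on $\mathcal C$ the floor condition $V_je^{-M_\ell^{(j)}}\ge 2\ell$ persists at every step: iterating the one-step bound with benign floors gives $V_j\ge n\,e^{-\sum_{i<j}M_\ell^{(i)}}/2^j$, hence
\[
V_j e^{-M_\ell^{(j)}}\;\ge\;\frac{n\,e^{-\sum_{i\le j}M_\ell^{(i)}}}{2^j}\;\ge\;\frac{n^{\eps/2}}{2^q}\;\ge\;n^{\eps/4-o(1)},
\]
which dominates $2\ell$ for $n$ large. Consequently the recursion is self-consistent on $\mathcal C$ and
\[
\log V_q\;\ge\;\log n-\textstyle\sum_{j<q}M_\ell^{(j)}-q\log 2\;\ge\;\log n-(1-\eps/2)\log n-(\eps/4)\log n\;=\;\tfrac{\eps}{4}\log n,
\]
so that $V_q\ge n^{\eps/4}$ on $\mathcal C$ and $\pr{V_q<n^{\eps/4}}\le\pr{\mathcal C^c}$.

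It then remains to estimate $\pr{\mathcal C^c}$. For part (a), the variables $(M_\ell^{(j)})_{j\ge 0}$ are i.i.d.\ non-negative with finite mean ($\ex{M_\ell}\le\ell\ex{Y}<\infty$), so the weak law of large numbers applied to the sum of $q\to\infty$ summands gives $\pr{\mathcal C^c}\to 0$ as $n\to\infty$. For part (b), the hypothesis $\ex{X^{-\lambda}}<\infty$ supplies $\ex{e^{\lambda Y}}<\infty$ and therefore $\ex{e^{\lambda M_\ell}}\le\ex{e^{\lambda Y}}^\ell<\infty$; a standard Chernoff bound for sums of i.i.d.\ variables with finite exponential moments then yields $\pr{\mathcal C^c}\le e^{-cq}$ for some $c=c(\delta,\ell)>0$, and since $q=\Theta(\log n)$ with $\ell$ a fixed constant, this gives $\pr{\mathcal C^c}\le n^{-\eta}$ for some $\eta>0$, establishing~\eqref{eq:Vq_tail}.

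The main subtlety is the tension between the concentration of $\sum M_\ell^{(j)}$ (an averaged quantity) and the per-step floor-correction hypothesis (a worst-step quantity): because the $V_j$'s are governed by partial sums rather than by individual terms, the induction above needs only the single global inequality $\sum_{j<q}M_\ell^{(j)}\le(1-\eps/2)\log n$ implied by $\mathcal C$. It is precisely this reduction that allows part (a) to go through under just $\ex{-\log X}<\infty$, with no tail information on $M_\ell$ beyond existence of its mean, while part (b) merely exchanges the weak law for its Chernoff-quantified version.
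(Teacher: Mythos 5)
Your proof is correct and follows essentially the same route as the paper: lower-bound the per-block evolution of $V_j$ by the best path over the next $\ell$ levels, identify the per-block cost $M_\ell^{(j)}=-\log Z_j^{(\ell)}$ as the generation-$\ell$ minimum of a branching random walk with step $-\log X$, fix $\ell$ large via Theorem~\ref{thm:minbrw1} so the mean cost is just above $\ell/\lambda_k$, and then concentrate the i.i.d.\ sum (weak law for part (a), exponential-moment bound for part (b)). The only cosmetic differences are that the paper iterates the floor additively to get $V_q\ge n\prod_j Z_j^{(\ell)}-q\ell$ (absorbing the error into the slack between $n^{\eps/4}$ and $n^{\eps/3}$, so no extra lower bound on $\ell$ is needed) whereas you iterate multiplicatively with a $2^{-q}$ loss, and the paper invokes a renewal-process LLN where you use the WLLN directly and a Markov bound on $\exc{Z^{-\eta}}$ where you phrase it as Chernoff on $\exc{e^{\lambda M_\ell}}$ — all equivalent.
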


\begin{proof}
Recall that for a string $s$, we write $\underline s_0=\emptyset,\underline s_1, \underline s_2,\dots$ for the prefixes of $s$ of length $0,1,2,\dots$, respectively. Thus, for $j \geq 0$, we have by definition 
	\begin{align*}
	V_{j+1} &= \max\{ \lfloor \lfloor V_{j} X'_{S_{j} \cdot \underline s_1 }\rfloor \cdots X'_{S_{j} \cdot \underline s_\ell} \rfloor : s \in \cA^{\ell}\}\\
			&\geq V_{j} \cdot \max\{X'_{S_{j} \cdot \underline s_1}\times \dots \times X'_{S_j\cdot \underline s_\ell}: s\in \cA^\ell\}  - \ell.
	\end{align*}
Letting $Z_{j}^{(\ell)} = \max\{X'_{S_{j} \cdot \ul s_1} \times \cdots \times X'_{S_{j} \cdot s_\ell}:s\in \cA^\ell\}$, we have
	\begin{align*}
	V_{q} &\geq \left(\left( \left(n Z^{(\ell)}_{0} - \ell \right) Z^{(\ell)}_1 - \ell \right) \cdots Z^{(\ell)}_{q-1}\right) - \ell \geq n \prod_{j=0}^{q-1} Z^{(\ell)}_{j} - q \ell.
	\end{align*}
It follows immediately that
\begin{align*}
\prc{V_q \leq n^{\eps/4}} &\leq \pr{ n \prod_{j=0}^{q-1} Z^{(\ell)}_{j} - q \ell \leq n^{\eps/4}}\\
				&\le \pr{ \sum_{j=0}^{q-1} - \log Z^{(\ell)}_{j} \geq \left(1-\frac\eps3\right)\log n},
\end{align*}
for all $n$ large enough.
Observe that $-\log Z^{\ell}_0$ is nothing but the minimum at generation $\ell$ of a branching random walk with increments distributed as $-\log X$ . As $\ex{-\log X} < \infty$, using Theorem~\ref{thm:minbrw1}, we get
\begin{equation}
\label{eq:expminbrw}
\lim_{\ell \to \infty} \frac{\exc{-\log Z_0^{(\ell)}}}{\ell} = \frac{1}{\lambda_k}.
\end{equation}
We pick $\ell_0=\ell_0(\eps)$ so that for $\ell \geq \ell_0$, we have $(1-\frac \eps8)\lambda_k \exc{-\log Z_0^{(\ell)}} \leq \ell$. In the rest of the proof, $\ell = \ell_0$ is fixed and we let $Z$ be a random variable having the same distribution as $Z^{(\ell)}_0$.

We can now use a limit theorem for renewal processes (see for example \cite{GS01}, Chapter 10.2, Theorem 1, see also \cite{Gut2009}).
%
%
%
%
%
We have the convergence in probability:
\begin{equation}
\label{eq:renewal}
\frac{\max \{q : - \sum_{j=0}^{q-1} \log Z_j^{(\ell)} \leq (1-\frac \eps 3)\log n \}}{(1-\frac \eps 3)\log n} \inprob \frac{1}{\exc{- \log Z^{(\ell)}_0}}\ge \frac{ (1-\frac \eps 8)\lambda_k}{\ell},
\end{equation}
where the last inequality follows by our choice for $\ell$.
Thus, with probability going to one, and for $q = \ceil{(1-\eps) \lambda_k \log n/\ell}$ 
we have
\begin{equation}
\label{eq:probagen}
\lim_{n \to \infty} \prc{V_q \leq n^{\eps/4}} = 0. 
\end{equation}

We now move on to the proof of the tail bound (\ref{eq:Vq_tail}). Note that, in a stochastic sense, we have $Z_0^{(\ell)}\ge X_1 \cdot X_2 \cdots X_\ell,$
where $X_i$ are i.i.d.\ copies of $X$. Hence, since $\ell$ is fixed, there exists $\eta>0$ such that $\exc{Z^{-\eta}}<\infty$ so that, by Markov's inequality,
\begin{align*}
\pr{n\prod_{j=0}^{q-1}Z^{(\ell)}_{j} \le n^{\eps/3}}
&\le \left(\frac{\exc{e^{ -\eta\log Z }}}{e^{\eta \ell(1-\eps/3)/(1-\eps/2)\lambda_k}}\right)^{q},
\end{align*}
for all $n$ large enough.
To complete the proof, observe that, as $\eta\to0$, we have
\begin{align*}
\frac{\exc{e^{-\eta \log Z}}}{e^{\eta \ell(1-\eps/3)/(1-\eps/2)\lambda_k}}
&=1+\eta \left(\exc{-\log Z}-\frac{(1-\eps/3)\ell}{(1-\eps/2) \lambda_k}\right)+o(\eta)\\
&\leq1+\eta \frac{\ell}{\lambda_k}\left(\frac 1 {1-\frac \eps8}-\frac{1-\frac\eps3}{1-\frac \eps 2}\right) + o(\eta)<1
\end{align*}
for $\eta$ small enough by our choice for $\ell$; it is routine to verify that the second term is indeed negative for any $\eps\in (0,1)$. Since $q=\ceil{(1-\eps) \lambda_k \log (n)/\ell}$, the tail bound (\ref{eq:Vq_tail}) follows readily.
\end{proof}

We call the event studied in this section $A_n(q(n)):= \event{V_{q(n)} > n^{\eps/4}}$. To make the notation lighter, the dependence in $n$ will be omitted.

\subsubsection{\bf The real \textsc{dag}: handling collisions}

Recall that the statement in Lemma~\ref{lem:Vq} only deals with the ideal tree model. In order to prove that a long enough path also exists in the real \textsc{dag}, we couple the \textsc{sarrd} and the ideal tree in such a way that with high probability, the path $S_q$ that we exhibited in the ideal tree can also be found inside the \textsc{sarrd}. What we mean here is the following: the path $S_q$ defines a sequence of distinct \emph{labels} in the ideal tree; in the \textsc{sarrd} the \emph{nodes} corresponding to these labels are also along a path with high probability.

We use the random variables $X'_s$ that were used in the ideal tree, and new independent random variables $X''_{x, p}$ for $x \in \{0, \dots, n\}$ and $p\in \cA$. 
We start by defining an ordering of strings:
\begin{equation}
\label{eq:orderstring}
s \leq s' \quad\text{ if and only if }\quad |s| < |s'| \text{ or } |s| = |s'| \text{ and } s \leq_{\textrm{lex}} s'
\end{equation}
where $\leq_{\textrm{lex}}$ is the lexicographic order on the strings on the alphabet $\cA$. This is the \emph{breadth-first} order in the $k$-ary tree. Then, if one of the paths from $n$ has label $x$ in the ideal tree, let $T_x$ denote the first (or breadth-first) such path:
\[
T_x = \min \{ s : L'(n,s) = x, |s|\le n \}, 
\]
where the min is taken with respect to the order in \eqref{eq:orderstring} and we agree that $\min\{\emptyset\}=\infty$. The labels in the ideal tree are not distinct, and the corresponding vertices cannot directly represent nodes in the \textsc{dag}. We now ensure that a node $x$ in the \textsc{sarrd} corresponds to the \emph{first} vertex in the ideal tree with label $x$ (if such a vertex exists):
 for all $p \in \cA$ let
\[
X_{x,p} = \left\{ \begin{array}{ll}
X'_{T_x \cdot p} & \textrm{if $T_x \neq \infty $}\\
X''_{x,p} & \textrm{if $T_x = \infty $}.
\end{array} \right.
\]
Clearly, the random variables $X_{x,p}$ are independent and distributed as $X$, so that they define a proper $(X,k)$-\textsc{sarrd}.

We define the event that the sequence of labels along the path $S_q$ is the same in the \textsc{dag} generated by the variables $(X_{x,p}: 0\le x \le n, 1\le p \le k)$ and in the ideal tree generated by $(X'_{x,p}: 0\le x\le n, 1\le p \le k)$
\[
B(q) = \event{ L'(n, s) = L(n, s): s\preceq S_q}.
\]

In particular, if $B(q)$ and $A(q)$ both hold, then the path $S_q$ does not reach the root since its minimum label is at least $n^{\eps/4}$ and since $S_q$ is also a path in the \textsc{dag} we have $D_n\ge q\ell$. Write $E^c$ to denote the complement of an event $E$. Then, since $q=\ceil{(1-\eps) \lambda_k \log (n)/\ell}$, we have
\begin{align}\label{eq:bound_dnAB}
\pr{D_n < (1-\eps) \lambda_k \log n} 
&\le \pr{A(q)^c \cup B(q)^c} \nonumber\\
&= \pr{A(q)^c} + \pr{B(q)^c \cap A(q)}.
\end{align}
 Lemma~\ref{lem:Vq} bounds the probability that $A(q)$ does not hold so that to prove Lemma~\ref{lem:typlowerb}, it suffices to bound the probability that $B(q)^c\cap A(q)$ occurs.

\begin{lemma}
Let $\eps\in(0,1)$ and let $q=q(n) = \ceil{(1-\eps) \lambda_k \log(n)/\ell}$. Then, there exists $\eta>0$ such that for $n$ large enough
\[
\pr{A(q) \cap B(q)^c} \leq n^{-\eta}.
\]
\end{lemma}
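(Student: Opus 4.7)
The event $B(q)^c$ says that for some $i \in \{1, \dots, q\ell-1\}$, the $i$-th prefix $\sigma_i$ of $S_q$ is not the breadth-first first occurrence of its label $U_i := L'(n, \sigma_i)$; equivalently, some string $s'' < \sigma_i$ (so $|s''| \le i$) satisfies $L'(n, s'') = U_i$. On the event $A(q)$ the labels decrease along ancestor paths and $V_q \ge n^{\eps/4}$, so every $U_i \ge n^{\eps/4}$. The basic density estimate I would use, coming from $X$ having a bounded density $\|f\|_\infty$, is that $\pr{L'(n,s) = v \mid L'(n,s^-) = m} \le \|f\|_\infty/m$. Iterating and using that an ancestor of a vertex with label $v$ has label at least $v$ yields $\pr{L'(n, s) = v} \le \|f\|_\infty/v$ for every string $s$ and integer $v$, and for distinct $s, s'$ the independence of the increments after their longest common prefix gives a factored bound $\pr{L'(n,s) = L'(n,s') \ge n^{\eps/4}} \le C/n^{\eps/4}$.

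My plan is a union bound over $i$ and candidate colliding strings $s''$, split according to whether $s'' \in \mathcal E := \bigcup_{j=0}^{q-1}\{S_j \cdot w : |w| \le \ell\}$. Since $\ell$ is a fixed constant and $q = O(\log n)$, one has $|\mathcal E| \le 2q k^\ell = O(\log n)$, so the contribution from pairs inside $\mathcal E$ is $O(|\mathcal E|^2 / n^{\eps/4}) = O(\log^2 n / n^{\eps/4})$, much smaller than $n^{-\eta}$ for any $\eta < \eps/4$. For $s'' \notin \mathcal E$, the crucial structural point is that, letting $\tau$ be the longest prefix of $s''$ that lies in $\mathcal E$, the random variables $X'_w$ driving $L'(n, s'')$ beyond $\tau$ are independent of $\mathcal F := \sigma(\text{exploration})$, so conditional on $\mathcal F$ the label $L'(n, s'')$ is an independent multiplicative walk starting from the $\mathcal F$-measurable value $L'(n, \tau)$.

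The main obstacle is that the crude union bound over $s'' \notin \mathcal E$ fails: there are up to $k^{q\ell}\approx n^{(1-\eps)\lambda_k\log k}$ such strings, which dwarfs the per-string density bound $\|f\|_\infty/U_i \le \|f\|_\infty/n^{\eps/4}$. To beat this, I would replace (for the long-excursion regime) the density bound by the Chernoff estimate $\pr{L'(n, s'') \ge U_i \mid \mathcal F} \le \exp\!\bigl(-(|s''|-|\tau|)\,\Lambda^\star(\log(L'(n,\tau)/U_i)/(|s''|-|\tau|))\bigr)$ coming from $|s''|-|\tau|$ i.i.d.\ copies of $-\log X$. Grouping off-$\mathcal E$ strings by $(|\tau|, |s''|)$, bounding their number by $|\mathcal E| \cdot k^{|s''|-|\tau|}$, and using the defining equation $\Lambda^\star(1/\lambda_k) = \log k$ of $\lambda_k$ together with the slack provided by the strict inequality $q\ell \le (1-\eps)\lambda_k\log n$ that is built into the choice of $q$, the product $k^{|s''|-|\tau|}\exp(-(|s''|-|\tau|)\Lambda^\star(\cdot))$ is controlled uniformly in $i$ and $|\tau|$. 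Combining the short-excursion density bound with the long-excursion Chernoff bound, the resulting double sum over $i$ and divergence level telescopes to at most $n^{-\eta}$ for some $\eta = \eta(\eps) > 0$; the careful bookkeeping of this sum, and the choice of cutoff between the two regimes, is the technical heart of the proof.
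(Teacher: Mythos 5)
Your overall framing matches the paper's: both reduce $B(q)^c \cap A(q)$ to the existence of a prefix of $S_q$ that is not the first occurrence of its label, then control collisions with a density estimate of order $b/n^{\eps/4}$. Your ``inside $\mathcal E$'' bound (counting $O(\log^2 n)$ pairs, each with probability $\le \|f\|_\infty/n^{\eps/4}$) is essentially the same calculation the paper performs: the paper's triple union over $(j,s,s_0)$ with $j<q$ and $|s|,|s_0|\le\ell$ has $qk^{2(\ell+1)} = O(\log n)$ terms, and the supremum bound $\sup_{u\ge n_1,\,v}\pr{v=\floor{uX'_s}}\le b/n_1$ gives the same rate $O(\log n\cdot n^{-\eps/4})$. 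The paper does \emph{not} carry out any separate argument for colliding strings outside the explored windows; its union bound is taken only over $s_0$ with $|s_0|\le\ell$ below $V_j$.

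Where you part ways is in the ``outside $\mathcal E$'' part, and there your sketch has a real problem. You propose to control $k^{|s''|-|\tau|}\exp\bigl(-(|s''|-|\tau|)\Lambda^\star(\log(L'(n,\tau)/U_i)/(|s''|-|\tau|))\bigr)$ uniformly using $\Lambda^\star(1/\lambda_k)=\log k$ and the slack in $q$, but this product is \emph{not} uniformly bounded. Write $d=|s''|-|\tau|$ and $a=\log(L'(n,\tau)/U_i)$; the exponent is $d(\log k-\Lambda^\star(a/d))$. On the greedy path $U_i\approx n e^{-i/\lambda_k}$ while $s''$ must satisfy $|s''|\le i$, so $a/d\ge 1/\lambda_k$ is the generic case, not the exception. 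For $a/d\in(1/\lambda_k,\ex{-\log X}]$ one has $\Lambda^\star(a/d)<\log k$ and the exponent is positive, so the sum over such $d$ grows rather than telescopes; the worst case $a/d\approx\ex{-\log X}$ gives $k^d\approx(L'(n,\tau)/U_i)^{\log k/\ex{-\log X}}$, which combined with the final-step density factor $b/U_i$ is polynomially large in $n$. Replacing the density bound by Chernoff therefore does not close the count for short fast-dropping $s''$, and combining both estimates does not fix it either. In short: your decomposition into $\mathcal E$ / not-$\mathcal E$ is a reasonable instinct and your in-$\mathcal E$ part reproduces the paper's bound, but the off-$\mathcal E$ Chernoff argument as stated does not go through and would need a substantially different idea (or a reinterpretation of the coupling that makes that regime moot).
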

\begin{proof}
In the following, we write $n_1=n^{\eps/4}$. Recall that $s^-$ denotes the prefix of $s$ of length $|s|-1$. If the path $S_q$ is a breadth-first path in the ideal tree (in the sense that each of its vertices is the first with its label) then our coupling ensures that $B(q)$ occurs. So for $B(q)$ to fail, there must be a node in $S_q$ that is not the first with its label. We have
\begin{align}
\label{eq:collisionbound}
\pr{B(q)^c \cap A(q)}
&\leq \pr{ \exists j< q, |s|\le \ell : T_{L'(V_j, s)} \neq s, T_{L'(V_j, s^-)} = s^-, V_q \geq n_1} \notag\\
&\leq \sum_{j=0}^{q-1} \pr{ \exists s : |s|\le \ell, T_{L'(V_j, s)} \neq s, T_{L'(V_j, s^-)} = s^-, V_q \geq n_1}.
\end{align}
For any fixed $j<q$, we have by the union bound
\begin{align}\label{eq:collision_term}
&\pr{ \exists s, |s|\le \ell : T_{L'(V_j, s)} \neq s, T_{L'(V_j, s^-)} = s^-, V_q \geq n_1}\notag \\
&\leq \sum_{|s|\le \ell} \pr{\exists s_0 < s : L'(V_j, s_0) = L'(V_j, s), T_{L'(V_j, s^-)} = s^-, V_q \geq n_1}\notag \\
&\leq \sum_{|s|\le \ell} \sum_{s_0 < s} \pr{L'(V_j, s_0) = \floor{X'_{s} L'(V_j, s^-)}, T_{L'(V_j, s^-)} = s^-, V_q \geq n_1}.
\end{align}
We now condition on $L'(V_j, s^-)$ and on $L'(V_j, s_0)$ and use the independence between the random variable $X_s$ and $(L'(V_j, s^-), L'(V_j, s_0))$:
\begin{align*}
&\pr{L'(V_j, s_0) = \floor{X'_{s} L'(V_j, s^-)}, T_{L'(V_j, s^-)} = s^-, V_q \geq n_1} \\
&\leq \sum_{u, v \le n} \pr{v = \floor{uX'_s}, L'(V_j, s^-) = u, L'(V_j, s_0) = v, L'(V_j, s^-) \geq n_1} \\
&\leq \sup_{u \geq n_1, v \le n} \pr{v = \floor{uX'_s}}.
\end{align*}
Since $X'_s$ has a density bounded by $b$, we have, for any $v\le n$ and $u\le n_1$,
\[
\pr{v = \floor{uX'_s}} = \pr{v \leq uX'_s < v+1} \leq \frac{b}{u} \leq \frac{b}{n_1}.
\]
Thus, going back to equations \eqref{eq:collisionbound} and \eqref{eq:collision_term}, since there are at most $q k^{2(\ell+1)}$ terms, each one at most $b/n_1$, we obtain
\begin{align*}
\pr{B(q)^c \cap A(q)} 
				&\leq \frac{b q k^{2(\ell+1)}}{n_1}, 
\end{align*}
which readily yields the result since $n_1=n^{\eps/4}$, $q = \ceil{(1-\eps) \lambda_k \log(n)/\ell}$ and $b, k, \ell$ are fixed constants.
\end{proof}
Thus, recalling Lemma~\ref{lem:Vq} and (\ref{eq:bound_dnAB}), we have
\[
\lim_{n \to \infty} \pr{D_n < (1-\eps) \lambda_k \log n} = 0.
\]
In the case when the attachment distribution $X$ is such that $\exc{X^{-\lambda}}<\infty$ for some $\lambda>0$, there exists $\eta>0$ such that the following stronger bound holds for all $n$ large enough
\[
\pr{D_n < (1-\eps) \lambda_k \log n} \le n^{-\eta},
\]
which finishes the proof of Lemma \ref{lem:typlowerb}.


\section{\bf Longest paths in random $k$-\textsc{dag}s: minimum distance $\min_{n/2 \leq x \leq n} D_x$}
\label{sec:minimum_distance}

In this section, we suppose that the attachment distribution $X$ has a bounded density and satisfies
\begin{equation}\label{eq:X_tail}
\qquad \pr{X\le t}=t^{\alpha+o(1)}, 
\end{equation}
as $t\to0$, for some $\alpha\in(0,\infty)$. Note that this implies that $\exc{X^{-\alpha/2}} = \int_{0}^{\infty} \prc{X^{-\alpha/2} \geq t} dt < \infty$. The lower tails of the step size should clearly influence the distances; the present setting with an underlying branching structure points towards the dependence in \eqref{eq:X_tail}. The value of $\alpha$ determines the value of the minimum distance for a random recursive $(k, X)$-\textsc{dag}.  Define the constant
\begin{equation}\label{eq:def_beta}
\beta:= \max \left(1-\frac 1{k\alpha}, 0\right).
\end{equation}

\begin{theorem}
\label{thm:minlongest}
Let $k \geq 2$ and $X$ be as above. The minimum longest path distance in a random recursive $(k,X)$-\textsc{dag} satisfies
\[
\frac{\min_{n/2 \leq x \leq n} D_x}{\log n} \to \beta \lambda_k,
\]
in probability,
where $\lambda_k = \sup\left\{z:\; \Lambda^\star(1/z) \le \log k \right\}$ is the constant defined in \eqref{eq:lambdak}.
\end{theorem}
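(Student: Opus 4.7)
The proof splits into matching bounds: (i) $\pr{\min_{n/2 \le x \le n} D_x \ge (\beta+\varepsilon)\lambda_k\log n} \to 0$ and (ii) $\pr{\min_{n/2 \le x \le n} D_x \le (\beta-\varepsilon)\lambda_k\log n} \to 0$ (the latter trivial when $\beta = 0$ since depths are non-negative, so we mainly focus on $\beta > 0$). The common thread is the branching-random-walk dictionary sketched at the end of Section~\ref{sec:BRW}: since the steps $-\log X > 0$ are non-negative, positions along paths are monotone, so in the ideal tree rooted at $x$ the event $D_x \le c\log n$ is equivalent to the right-tail event $M_{\lceil c\log n\rceil+1} \ge \log x$ for the branching random walk with step distribution $-\log X$. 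Hence Theorem~\ref{thm:BRW-righttail} is the driving input on both sides, and the tail exponent $k\alpha$ in \eqref{eq:def_beta} is exactly what emerges from the right-tail estimate.

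For part (ii), perform a union bound and a sharp tail estimate. Fix $c < \beta\lambda_k$ and write
\[
\pr{\min_{n/2 \le x \le n} D_x \le c\log n} \le \sum_{x=\lceil n/2\rceil}^{n} \pr{D_x \le c\log n};
\]
the goal becomes $\pr{D_x \le c\log n} \le n^{-(1+\eta)}$ uniformly in $x \in [n/2, n]$. Translating to the ideal tree at $x$, this probability is at most $\pr{M_{m} \ge \log(n/2)}$ with $m = \lceil c\log n\rceil+1$, and since $\log(n/2)/m \to 1/c > 1/\lambda_k = \gamma$, Theorem~\ref{thm:BRW-righttail} yields
\[
\pr{M_m \ge \log(n/2)} \le c' n^{-k\alpha(1 - c/\lambda_k)(1-\delta)},
\]
whose exponent strictly exceeds $1$ by the choice $c/\lambda_k < \beta = 1 - 1/(k\alpha)$. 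The ideal-tree-to-\textsc{dag} passage is handled by the same breadth-first blocked coupling as in the proof of Lemma~\ref{lem:typlowerb}: one explores in blocks of constant length $\ell$, bounds the label-collision probability against the first-ancestor representation, and absorbs the resulting $O((\log n)\cdot k^{2\ell} / n^{\varepsilon/4})$ coupling error into the $n^{-1-\eta}$ budget.

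For part (i), exhibit with high probability at least one $x \in [n/2, n]$ whose parents are all small. Pick $\mu$ slightly less than $1 - \beta = 1/(k\alpha)$ if $\beta > 0$ (or $\mu$ slightly less than $1$ if $\beta = 0$), ensuring $k\mu\alpha < 1$, and define
\[
F_x = \Bigl\{ \lfloor x X_{x,p}\rfloor \in \bigl[n^{(1-\mu)/2},\, n^{1-\mu}\bigr] \text{ for all } p \in [k]\Bigr\}.
\]
By \eqref{eq:X_tail}, $\pr{F_x} = n^{-k\mu\alpha + o(1)}$, and crucially the events $F_x$ are independent across $x$ because they depend only on the fresh variables $X_{x,1},\dots,X_{x,k}$. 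Since $k\mu\alpha < 1$, the Bernoulli count $|\{x \in [n/2, n] : F_x\}|$ has mean $n^{1 - k\mu\alpha + o(1)} \to \infty$, so it is positive with probability tending to $1$. Let $x^\star$ be the smallest such index and condition on its parents $y^\star_1,\dots,y^\star_k \in [n^{(1-\mu)/2}, n^{1-\mu}]$; by independence between $\{X_{x,p}\}_{x \ge n/2}$ (which determine $x^\star$ and its parents) and $\{X_{y,p}\}_{y < n/2}$ (which determine the depths of the parents), Lemma~\ref{lem:typupperb} applies to each parent: for any fixed $c' > \lambda_k$,
\[
\pr{D_{y^\star_p} > c'\log y^\star_p} \le (y^\star_p)^{-\eta'} \le n^{-\eta'(1-\mu)/2} = o(1).
\]
Choosing $c'$ close to $\lambda_k$ and $\mu$ close to $1 - \beta$ so that $c'(1-\mu) < (\beta+\varepsilon)\lambda_k$, we obtain $D_{x^\star} \le 1 + \max_p D_{y^\star_p} \le (\beta+\varepsilon)\lambda_k\log n$ with probability $\to 1$.

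The main obstacle is the sharp quantitative tail bound in part (ii). The underlying BRW inequality (Theorem~\ref{thm:BRW-righttail}) is clean, but transferring to the \textsc{dag} requires controlling label collisions among a number of potential ancestor paths that is polynomial in $n$, without spoiling the $n^{-(1+\eta)}$ probability budget that drives the union bound over $n/2$ values of~$x$. The blocked exploration with fixed block-length $\ell$, combined with truncation at labels $\ge n^{\varepsilon/4}$, is what makes the collision term negligible against the BRW tail; balancing these two error sources uniformly in $x \in [n/2, n]$ is where the technical core of the proof lies, and where the dependence among paths emphasized in the outline of Section~\ref{sec:def} must be handled carefully.
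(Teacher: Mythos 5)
Your part (i) (the upper bound on the minimum) is essentially the paper's argument: isolate nodes $x\in[n/2,n]$ whose $k$ parents all have small labels (the paper uses the threshold $n^{\beta+\eps}$ rather than an interval), exploit the independence of these events across distinct $x$ to show the count is positive with high probability, then condition on the small-label parents and invoke Lemma~\ref{lem:typupperb}. This matches the paper's Lemma~\ref{lem:minlongestupperb} closely.

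Part (ii) has a genuine gap, and you identify its location but not how to close it. You propose to transfer $\pr{D_x \le c\log n}$ to the ideal-tree right-tail event $\pr{M_m \ge \log(n/2)}$ using ``the same breadth-first blocked coupling as in Lemma~\ref{lem:typlowerb}'' and to ``absorb the resulting $O((\log n)\,k^{2\ell}/n^{\eps/4})$ coupling error into the $n^{-1-\eta}$ budget.'' This arithmetic does not close: $(\log n)\,k^{2\ell}\,n^{-\eps/4}$ is of order $n^{-\eps/4+o(1)}$, which is far larger than $n^{-1-\eta}$ for any $\eps<4$, so the collision error alone swamps the bound you need uniformly in $x$. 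There is also a structural mismatch: the coupling in Lemma~\ref{lem:typlowerb} constructs a \emph{single} greedy path of length $\approx\lambda_k\log n$ and shows it survives; it says nothing about the whole ensemble of $k^m$ ancestor paths, yet $D_x\le c\log n$ is the event that \emph{all} of them terminate early. Bounding that event by the ideal-tree probability would require coupling the entire exploration tree of depth $m\approx c\log n$, whose collision cost is unmanageable.

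The paper instead avoids a direct ideal-tree coupling at the relevant depth. It explores only $h=\lfloor 10\log\log n\rfloor$ generations and proves (Lemmas~\ref{lem:problongjump}, \ref{lem:collision}, Proposition~\ref{prop:probex}) that with probability $1-o(1/n)$ node $x$ has at least $k^{h-3}$ \emph{distinct} ancestors at generation $h$, all with label at least $n^{\beta-\delta}$. It then introduces a global event $A$ (see~\eqref{eq:eventA}) controlling the density of ``bad'' nodes $y$ with $D_y<(1-\eps)(\beta-2\delta)\lambda_k\log n$ in each dyadic scale, which holds with probability $1-O(n^{-\eta/2})$; by Lemma~\ref{lem:problandbadnode}, conditionally a fresh jump from any node of label $\ge n^{\beta-\delta}$ lands in the bad set with probability at most $1/2$. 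For $D_x$ to be small, every child of every one of the $k^{h-3}$ distinct high-label ancestors must land in the bad set, giving a conditional probability at most $(1/2)^{k\cdot k^{h-3}}$, which is super-polynomially small and trivially survives the union over $x$. This conditional-independence-via-distinct-ancestors mechanism is the missing idea; the BRW right-tail estimate explains \emph{why} the threshold $\beta\lambda_k$ appears but cannot, on its own, produce the $o(1/n)$ per-node bound in the dependent \textsc{dag}.
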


%
%
In the case where $\pr{X \leq t} \geq t^{\alpha + o(1)}$ for any $\alpha > 0$, we obtain a limit of $0$. In fact, the upper bound in Lemma \ref{lem:minlongestupperb} only uses a lower bound on $\pr{X \leq t}$ for $t$ in the neighborhood of $0$. Similarly, if $\pr{X \leq t} \leq t^{\alpha + o(1)}$ for any $\alpha > 0$, the limit becomes $\lambda_k$ as the lower bound of Lemma \ref{lem:minlongestlowerb} only uses an upper bound on $\pr{X \leq t}$ for $t$ in the neighborhood of $0$. 
%
%

For $X\sim \text{uniform}[0,1)$, we have $\alpha=1$ and $\beta=1 - 1/k$. This yields $\min_{n/2 \leq x \leq n} D_x= (k-1)/k\cdot \lambda_k \log n)$ for uniform random recursive \textsc{dag}s. In particular, for $k=2$, we obtain
\[
\frac{D_n}{\log n} \to\lambda_2 \qquad \text{ and } \qquad \frac{\min_{n/2 \leq x \leq n} D_x}{\log n} \to \frac{\lambda_2}{2}
\]
where $\lambda_2 =  4.31107\dots$. This completes the table of asymptotic properties of different natural distances in uniform random \textsc{dag}s \cite[Table 1]{DJ09}.

The reader will easily be convinced when reading the proof that the result remains unchanged if one considers $\min\{D_i: \delta n\le i\le n\}$, for some $\delta>0$. We keep the current statement for simplicity.

\subsection{\bf Upper bound on $\min_{n/2 \leq x \leq n} D_x$}

\begin{lemma}
\label{lem:minlongestupperb}
For any $\eps > 0$, there exists $\eta>0$ such that, for all $n$ large enough,
\[
\pr{\min_{n/2 \leq x \leq n} D_x \geq (\beta+\eps) \lambda_k\log n}\le n^{-\eta}.
\]
\end{lemma}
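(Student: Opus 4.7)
The approach adapts the right-tail lower bound of Theorem~\ref{thm:BRW-righttail} to the \textsc{dag}: I construct for each $x \in [n/2,n]$ a sufficient event $F_x$ for $\{D_x \le c \log n\}$ (with $c := (\beta+\eps)\lambda_k$), arrange enough conditional independence across $x$, and conclude by a Chernoff-style bound on the intersection. Fix the auxiliary exponent $\kappa := \beta + \eps/2$, chosen so that $c/\kappa > \lambda_k$ and so that the typical longest-path length $\lambda_k \kappa \log n$ from a node of label $\le n^\kappa$ sits safely below $c \log n$. Set
\[
S^* := \{y \le n^\kappa : D_y \le c\log n - 1\}, \qquad F_x := \{\text{all $k$ parents of $x$ lie in } S^*\}.
\]
The recursion $D_x = 1 + \max_{p \le k} D_{\floorc{x X_{x,p}}}$ makes $F_x$ sufficient for $D_x \le c \log n$.

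\textbf{Conditional independence.} The structural key is that $S^*$ is measurable with respect to the $\sigma$-algebra $\mathcal F$ generated by the \textsc{dag} on $\{0,\ldots,\floorc{n^\kappa}\}$, while each $F_x$ (for $x \in [n/2,n]$, so $x > n^\kappa$) depends additionally only on the independent block $(X_{x,p})_{p \le k}$. Hence $\{F_x\}_{x \in [n/2,n]}$ are \emph{mutually independent conditionally on $\mathcal F$}. It therefore suffices to produce (i) a uniform lower bound $\pr{F_x \mid \mathcal F} \ge c_1 n^{-\rho}$, with $\rho < 1$, valid on a high-probability event $G \in \mathcal F$, and (ii) a tail bound $\pr{G^c} \le n^{-\eta_1}$. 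Conditional independence then gives
\[
\pr{\bigcap_{x \in [n/2,n]} F_x^c \;\Big|\; \mathcal F, G} \le (1 - c_1 n^{-\rho})^{n/2} \le \exp(-c_2 n^{1-\rho}),
\]
which combined with $\pr{G^c} \le n^{-\eta_1}$ yields the claimed $n^{-\eta}$ upper bound.

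\textbf{Marginal bound and control of $T$.} Writing $T := [0, n^\kappa] \setminus S^*$, I lower-bound
\[
\pr{F_x \mid \mathcal F} = \pr{\floorc{xX} \in S^*}^k \ge \bigl(\pr{\floorc{xX} \le n^\kappa} - \pr{\floorc{xX} \in T}\bigr)^k.
\]
The main term is controlled by the tail hypothesis: for $x \in [n/2, n]$, $\pr{\floorc{xX} \le n^\kappa} \ge \pr{X \le n^{\kappa - 1}} \ge c_0 n^{(\kappa - 1)\alpha + o(1)} = c_0 n^{-1/k + \alpha \eps/2 + o(1)}$, so the $k$-th power is at least $c_0^k n^{-\rho}$ with $\rho = 1 - k\alpha\eps/2 + o(1) < 1$. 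To bound $|T|$, apply Lemma~\ref{lem:typupperb} pointwise: for $y \in [c\log n, n^\kappa]$ the constant $\widetilde c(y) := (c\log n - 1)/\log y$ satisfies $\widetilde c(y) \ge c/\kappa > \lambda_k$, giving $\pr{D_y > c\log n - 1} \le C y^{-\eta_0}$ with uniform $\eta_0 > 0$; nodes $y < c\log n$ satisfy $D_y \le y < c\log n$ trivially. Summing, $\ex{|T|} \lesssim n^{\kappa(1 - \eta_0)}$, and Markov places $|T|$ below a threshold $\tau$ on an $\mathcal F$-measurable event $G$ with $\pr{G^c} \le n^{-\eta_1}$.

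\textbf{Main obstacle.} The delicate quantitative point is choosing $\tau$ so that $\pr{\floorc{xX} \in T}$ is at most half of the main term. With the crude density bound $\pr{\floorc{xX} \in T} \le b|T|/x$ this requires $\kappa(1 - \eta_0) < 1 - 1/k + \alpha \eps/2$, which is routine for $\alpha = 1$ but becomes tight for $\alpha > 1$ and small $\eps$, because $\eta_0 = \eta(c/\kappa) \downarrow 0$ as $\eps \downarrow 0$. The fix is to exploit the tail hypothesis once more to refine the density bound near $0$ to $\pr{\floorc{xX} = y} \lesssim (y/x)^{\alpha - 1 + o(1)}/x$ for $y \le n^\kappa$; summing over $y \in T$ then gives $\pr{\floorc{xX} \in T} \lesssim |T|\, n^{-\kappa + o(1)} \cdot \pr{\floorc{xX} \le n^\kappa}$, reducing the requirement on $|T|$ to the easy condition $|T| \ll n^\kappa$, which Markov provides with room to spare regardless of how small $\eta_0$ is.
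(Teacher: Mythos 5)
Your plan takes a genuinely different route from the paper, and the structural observation---conditioning on the \textsc{dag} restricted to labels at most $n^\kappa$ makes the events $F_x$, $x\in[n/2,n]$, conditionally independent---is correct and nicely organized. The paper instead runs a second-moment argument on the number of $x\in[n/2,n]$ all of whose parents have label at most $n^{\beta+\eps}$ (with \emph{no} requirement on the depth of those parents), deduces that some such $V$ exists with high probability, and then bounds $D_V$ by conditioning on the parent labels and invoking Lemma~\ref{lem:typupperb} \emph{once}, evaluated at the worst-case label $\lfloor n^{\beta+\eps}\rfloor$. In this way the paper never needs to control the size or location of your exceptional set $T=\{y\le n^\kappa:\,D_y>c\log n-1\}$, and the paper's argument only uses the one-sided bound $\pr{X\le t}\ge t^{\alpha+o(1)}$, whereas yours needs both directions of \eqref{eq:X_tail}.

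The step you flag as the ``main obstacle'' is in fact a genuine gap. The claimed refinement $\pr{\floorc{xX}=y}\lesssim (y/x)^{\alpha-1+o(1)}/x$ is a \emph{pointwise} estimate on the density of $X$ near $0$, and it does not follow from the paper's hypotheses (bounded density together with the cumulative tail $\pr{X\le t}=t^{\alpha+o(1)}$). A density compatible with both can still take values of order $b$ on a sparse set near $0$, in which case $\pr{\floorc{xX}=y}\asymp b/x$ for some $y\le n^\kappa$; since $T$ is random you cannot guarantee it avoids those $y$. With only the crude bound $\pr{\floorc{xX}\in T}\le b|T|/x$ and $|T|\lesssim n^{\kappa(1-\eta_0)}$, you need $\kappa(1-\eta_0-\alpha)<1-\alpha$, i.e.\ (for $\alpha>1$) $\kappa>(\alpha-1)/(\alpha-1+\eta_0)$; since $\eta_0\downarrow0$ as $\eps\downarrow0$ while $\kappa<1$ is fixed, this fails for small $\eps$. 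So, as written, your proof does not establish the lemma for $\alpha>1$ under the stated assumptions. Adding a regular-variation hypothesis on the density near $0$ would validate your refinement, but under the paper's assumptions you should restructure along its lines: control only the parents' \emph{labels} and then apply Lemma~\ref{lem:typupperb} to a single fixed label $\lfloor n^{\beta+\eps}\rfloor$, which sidesteps any bookkeeping about the set $T$.
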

\begin{proof}
The strategy is to show that there exists a node $x \in \{\ceil{n/2}, \dots, n\}$ such that all its parents have label at most $n^{\beta+ \eps}$. We then conclude using Theorem \ref{thm:typlongest}.
For $n/2 \leq x \leq n$, we look at the $k$ parents $L(x, 1), \dots, L(x, k)$ of $x$ in the \textsc{dag}. Since $\beta\ge 1-\frac 1{k\alpha}$ and $x\le n$, we have
\begin{align*}
\pr{\max\{L(x,1), \dots, L(x,k)\} \leq n^{\beta+\eps}} 
									&= \pr{ x X \leq n^{\beta+\eps}}^k \\
									&\geq \prm{ X \leq n^{-1/(k\alpha)+\eps}}^k \\
									&\geq n^{-1+k\alpha\eps/2}
\end{align*}
for all $n$ large enough, as $X$ satisfies \eqref{eq:X_tail}. Define
$$
N = \big|\big\{x : \ceil{n/2} \le x\le n, \max_{1\le i\le k} L(x,i) \leq n^{\beta+\eps} \big\}\big|.$$ We have 
\begin{equation}\label{eq:exp_N}
\ex{N} \geq \left(n - \ceil{\frac{n}{2}} + 1\right) \cdot n^{-1+k\alpha \eps/2} \geq \frac{n^{k\alpha \eps/2}}{2}.
\end{equation}
Observe that the events $\{\max(L(x,1), \dots, L(x,1)) \leq n^{1-1/(k\alpha)+\eps}\}$ are independent for different nodes $x$. So we can also compute the variance of $N$:
\[
\var N \leq \sum_{x=\ceil{n/2}}^n \pr{\max\{L(x,1), \dots, L(x,k)\} \leq n^{\beta+\eps}} = \ex{N}.
\]
Thus, using the second moment inequality, sometimes called Chung--Erd\H{o}s inequality \citep{CE52}, and since $\exc{N}\to\infty$ by \eqref{eq:exp_N}, we obtain 
\begin{equation}
\label{eq:existslongjump}
\pr{N =0} \leq \frac{\var{N}}{\var N + \ex{N}^2} \le \frac{1}{1+\exc{N}}=O(n^{-k\alpha \eps/2}).
\end{equation}

Now, define the random node
\[
V = \max \left\{ \{0\} \cup \left\{ x: \ceil{n/2}\le x\le n, \max(L(x,1), \dots, L(x,k)) \leq n^{\beta+\eps}\right\} \right\}.
%
%
%
\]
It only remains to show that such a node $V$ has a small longest path to the root. It is sufficient to bound the following probability, with $\eps<1$. We have
\begin{align*}
&\pr{\min_{n/2 \leq x \leq n} D_x \geq  (\beta+3\eps) \lambda_k \log n + 1} \\
&\leq \pr{D_{V} \geq (\beta + 3\eps) \lambda_k \log n + 1, N>0} + \pr{N=0} \\
&\leq \sum_{s=1}^k \pr{D_{L(V,s)} \geq (\beta + 3\eps) \lambda_k \log n, N > 0 } + \pr{N = 0} \\
&\leq \sum_{s=1}^{k}\pr{D_{L(V,s)} \geq (\beta + 3\eps) \lambda_k \log n, L(V,s) \leq n^{\beta+\eps}} + \pr{N=0} \\
&\leq k \pr{D_{\lfloor n^{\beta+ \eps}\rfloor} \geq (1+\eps)(\beta+\eps) \lambda_k \log n} + 
\pr{N=0} \qquad\text{for }\eps<1.
\end{align*}
The last inequality holds because we can condition on $L(V,s)$ and use the independence of the events $\event{L(V,s) = v}$ and $\{D_v \geq (1 + \eps) (\beta+\eps) \lambda_k \log n\}$ for any $v$.
We can then conclude using Lemma~\ref{lem:typupperb} and \eqref{eq:existslongjump}.
\end{proof}

\subsection{\bf Lower bound on $\min_{n/2 \leq x \leq n} D_x$}
The objective of this section is to prove the following lower bound. Note that when $\beta=0$, no lower bound is needed to prove Theorem~\ref{thm:minlongest}, so that we can safely assume here that $\beta>0$. In particular, we have $\beta=1-\frac 1{k\alpha}$.
\begin{lemma}\label{lem:minlongestlowerb}
Suppose that $\beta>0$. For any $\eps > 0$, there exists $\eta > 0$ such that
\[
\pr{\min_{n/2 \leq x \leq n} D_x \leq (\beta-\eps) \lambda_k \log n} = O(n^{-\eta}).
\]
\end{lemma}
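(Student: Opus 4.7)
The strategy is a union bound over $x\in\{\lceil n/2\rceil,\ldots,n\}$ and, for each such $x$, a \textsc{dag} analogue of Theorem~\ref{thm:BRW-righttail}; the heuristic following Theorem~\ref{thm:BRW-lefttail} (with $c\gamma<\beta$) suggests that this should yield $\pr{D_x\le m}\le n^{-1-\eta}$ for $m:=\lceil(\beta-\eps)\lambda_k\log n\rceil$ and some $\eta=\eta(\eps)>0$. The reduction to a \textsc{brw} event is immediate: iterating $\lfloor yX\rfloor\ge yX-1$ yields $L(x,s)\ge x\prod_{i=1}^m X^{(s)}_i-m$ for every $s\in\mathcal A^m$, where $X^{(s)}_i$ denotes the attachment variable used at the $i$-th step of the \textsc{dag} path indexed by $s$. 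Hence $D_x\le m$ forces
\[
M^{\mathrm{DAG}}_m(x):=\min_{s\in\mathcal A^m}\sum_{i=1}^m-\log X^{(s)}_i \ge \log(x/m)=\log n-O(\log\log n).
\]
Writing this threshold as $(\gamma+\tilde\eps)m$ with $\gamma=1/\lambda_k$, one has $\tilde\eps m=(1/(k\alpha)+\eps)\log n+o(\log n)$, and it remains to bound $\pr{M^{\mathrm{DAG}}_m(x)\ge(\gamma+\tilde\eps)m}$.

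I then replay the proof of Theorem~\ref{thm:BRW-righttail} with \textsc{dag}-prefix-sums in place of the \textsc{brw} positions. Set $h:=\lfloor 10\log_k m\rfloor=O(\log\log n)$ and split the event into (a) the ``big-jump'' event $\{\forall p\in[k]\ \exists u\in\mathcal A^h:\sum_{i=1}^{h+1}-\log X^{(p\cdot u)}_i>(1-\delta)\tilde\eps m\}$, and (b) the complementary event that some $p$ has all $k^h$ length-$(h+1)$ prefix-sums at most $(1-\delta)\tilde\eps m$ yet $M_m^{\mathrm{DAG}}(x)\ge(\gamma+\tilde\eps)m$ still holds. Event (a) is union-bounded over the $k^{kh}=(\log n)^{O(1)}$ tuples $(u^p)_p\in(\mathcal A^h)^k$; on the event $\mathcal D$ that the $k$ paths $p\cdot u^p$ are vertex-disjoint in the \textsc{dag}, their prefix-sums become sums of iid $-\log X$'s over disjoint node sets and hence are independent, and Markov's inequality with exponent $\alpha(1-\delta)$ produces a per-tuple bound $e^{-k\alpha(1-\delta)^3\tilde\eps m}=n^{-(1+k\alpha\eps)(1-\delta)^3+o(1)}\le n^{-1-k\alpha\eps/2}$ for $\delta$ small. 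Event (b) forces each of the $k^h$ sub-\textsc{dag}s at depth $h+1$ to have \textsc{brw}-minimum at depth $m-h-1$ of at least $(\gamma+\delta\tilde\eps)m$, and Theorem~\ref{thm:minbrw1} applied to each sub-\textsc{dag} (approximated by independent ideal-tree subtrees) contributes a super-polynomially small factor $c_0^{k^h}=e^{-\Theta((\log n)^{10})}$.

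The main obstacle will be the \textsc{dag}-to-ideal-tree coupling needed to justify both the independence on $\mathcal D$ in case (a) and the independence of the $k^h$ sub-\textsc{dag}s in case (b). The probability that two \textsc{dag}-paths from $x$ meet within $h$ steps is controlled, as in the collision argument of Section~\ref{sec:typical_distance}.2.2, by $O(k^{2(h+1)}/n_1)$, where $n_1$ is a lower bound on the labels reached in the top-$h$ exploration from $x$; for the typical value $n_1=n^{1-o(1)}$ this is $O(\mathrm{polylog}(n)/n)=n^{-1+o(1)}$, exactly at the rate borderline for the subsequent union bound over $x$. I therefore expect the delicate step to be a refinement of this collision estimate --- most likely by further conditioning on the event that no single step of the top-$h$ exploration lowers the label by an unusually large factor, using the hypothesis $\pr{X\le t}=t^{\alpha+o(1)}$ to force the intermediate labels to remain at $n^{1-\delta'}$ for some small $\delta'>0$, thereby pushing the collision error strictly below $n^{-1}$. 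Once this is done, cases (a) and (b) combine to give $\pr{D_x\le m}\le n^{-1-\eta}$, and the union bound over $x\in\{\lceil n/2\rceil,\ldots,n\}$ completes the proof.
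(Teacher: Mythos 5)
Your high-level plan --- union bound over $x$, then a \textsc{dag} analogue of the right-tail bound of Theorem~\ref{thm:BRW-righttail} --- is exactly what the paper's heuristic discussion suggests, and the reduction from $\{D_x\le m\}$ to the event that all $m$-step prefix sums exceed $\log n-O(\log\log n)$ is sound. The genuine gap is in the collision step, which you correctly identify as the delicate point but do not actually resolve. You want to restrict to the event $\mathcal D$ that the $k$ paths of length $h+1$ from $x$ are vertex-disjoint, and you propose to salvage $\pr{\mathcal D^c}$ by conditioning on no large one-step drops so that the intermediate labels stay above $n^{1-\delta'}$. This cannot work. With $h=\lfloor 10\log_k m\rfloor=O(\log\log n)$ the number of pairs of length-$(h+1)$ paths is $k^{2(h+1)}=\mathrm{polylog}(n)$; a label of order $\ell$ collides with a fixed earlier node with probability $\le b/\ell$; and even on the ``good'' event the labels at depth $O(\log\log n)$ are of order $n/\mathrm{polylog}(n)$, not $n$. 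So $\pr{\mathcal D^c}$ is of order $\mathrm{polylog}(n)/n$ at best, which is irreducibly larger than $1/n$, and conditioning on $\ell_{\min}\ge n^{1-\delta'}$ only weakens the bound (to $\mathrm{polylog}(n)/n^{1-\delta'}$). After the union bound over the $\sim n/2$ values of $x$, the $\pr{\mathcal D^c}$ term alone contributes $\mathrm{polylog}(n)$, not $O(n^{-\eta})$. The same coupling problem recurs in your case (b), where the $k^h$ ``sub-\textsc{dag}s'' are not the independent ideal trees to which Theorem~\ref{thm:minbrw1} applies.

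The paper circumvents this by never demanding a collision-free top exploration. In place of your case (a) $\cap\,\mathcal D$, Lemma~\ref{lem:manylarge} bounds directly, by an induction that conditions on the first time the $i$-th path meets the earlier ones and treats low-label and high-label collision points separately, the probability that for \emph{every} $p\in[k]$ some length-$h$ path starting with $p$ reaches a label $\le n^{\beta-\delta}$; this yields $n^{-1-\delta/4}$ with collisions present. Lemma~\ref{lem:collision} then bounds the number of collisions at high-label nodes, and the containment (\ref{eq:Ex}) shows that with probability $1-o(1/n)$ the node $x$ has at least $k^{h-3}$ \emph{distinct} ancestors of order $h$ with labels $\ge n^{\beta-\delta}$, even though the $h$-neighborhood of $x$ is essentially never a perfect $k$-ary tree. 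In place of your case (b), the paper controls the dyadic density of the bad set $\cB=\{y:D_y<(1-\eps)(\beta-2\delta)\lambda_k\log n\}$ via the explicit tail bound in Lemma~\ref{lem:typlowerb} and Markov's inequality (this is where the extra clause of Lemma~\ref{lem:typlowerb} is used), and Lemma~\ref{lem:problandbadnode} shows that a single step from any label $\ge n^{\beta-\delta}$ lands in $\cB$ with probability $\le 1/2$, so that the $k^{h-3}$ distinct ancestors all stepping into $\cB$ has probability $(1/2)^{k^{h-3}}$, super-polynomially small. The structural idea you are missing is to \emph{tolerate} collisions and extract a quantitative count of distinct high-label ancestors, rather than trying to exclude collisions, which is impossible at the required $o(1/n)$ rate.
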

Clearly, it is sufficient to consider $\eps<\beta$; we do so until the end of the proof. By the union bound, we have
\begin{align*}
\pr{\min_{n/2 \leq x \leq n} D_x \leq (\beta-\eps) \lambda_k \log n} 
&\le \sum_{x = \ceil{n/2}}^{n}  \pr{D_x \leq (\beta-\eps) \lambda_k \log n}\\
&\le n \cdot \sup_{n/2\le x\le n} \pr{D_x \le (\beta-\eps) \lambda_k \log n}.
\end{align*}
So it suffices to show that $\prc{D_x \leq (\beta-\eps) \lambda_k \log n} = o(1/n)$ uniformly for all $x \in \{\ceil{n/2}, \dots, n\}$. In the rest of the proof we fix $x \in \{\ceil{n/2}, \dots, n\}$. 

If $D_x \leq (\beta-\eps) \lambda_k \log n$, this means that all the ancestors $a$ of $x$ of order $h$ (think of $h = \floor{10 \log \log n}$) have depth $D_a \leq (\beta-\eps) \lambda_k \log n - h$. To bound the probability of such an event, we show that there are many distinct ancestors of order $h$ that have labels at least $n^{\beta- \delta}$ (Proposition \ref{prop:probex} that is based on Lemma \ref{lem:manylarge} and \ref{lem:collision}). Then using the explicit bound in Lemma \ref{lem:typlowerb}, we prove that the probability that none of these ancestors have direct parents that have typical depth is small.

More precisely, we build a small \textsc{dag} of ancestors of $x$ up to $h = \floor{10 \log \log n}$ generations. We start by showing that many of the ancestors of order $h$ have labels at least $n^{\beta - \delta}$.
As a warm up, we first bound the probability that a single ancestor $h$ levels away from $x$ has a low label.
\begin{lemma}
\label{lem:problongjump}
Let $x\ge n/2$ be fixed. Let $s \in [k]^h$ where $h = \floor{10 \log \log n}$ and $u \geq h$. For any $\eta > 0$ and $n$ large enough,
\[
\prc{L(x,s) \leq u} \leq \left( \frac{u}{n} \right)^{\alpha - \eta}
\]
In particular, for $\delta \in (0, \beta)$ and $u = n^{\beta - \delta}$, we get
\begin{equation}
\label{eq:special-u}
\prc{L(x,s) \leq n^{\beta - \delta} } \leq n^{-1/k-\alpha\delta/2}.
\end{equation}
\end{lemma}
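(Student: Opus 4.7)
The plan is to reduce the event $\{L(x,s) \le u\}$ to a Chernoff-type tail bound for a sum of $h$ i.i.d.\ copies of $Y := -\log X$. Writing $s = s_1 s_2 \cdots s_h$ and $R_i := X_{L(x,\ul s_{i-1}), s_i}$, the recursion $L(x,\ul s_i) = \floor{L(x,\ul s_{i-1}) R_i}$ telescopes (each floor loses at most one) to give $L(x,s) \ge x R_1 R_2 \cdots R_h - h$. Because $X \in [0,1)$ forces each application of the recursion to strictly decrease the label whenever the current label is at least $1$, the intermediate labels $L(x,\ul s_0), \dots, L(x,\ul s_{h-1})$ are pairwise distinct nodes; the $(X,k)$-\textsc{sarrd} construction then guarantees that $R_1, \dots, R_h$ are i.i.d.\ copies of $X$. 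Combining this with the hypotheses $u \ge h$ and $x \ge n/2$,
\[
\{L(x,s) \le u\} \;\subseteq\; \Big\{R_1 R_2 \cdots R_h \le \tfrac{4u}{n}\Big\} \;=\; \Big\{Y_1 + \cdots + Y_h \ge \log\tfrac{n}{4u}\Big\},
\]
with $Y_i := -\log R_i$ i.i.d.\ copies of $Y$.

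The tail hypothesis $\pr{X \le t} = t^{\alpha + o(1)}$ as $t \to 0$ implies that $\exc{X^{-\lambda}} < \infty$ for every $\lambda < \alpha$. Fix $\lambda := \alpha - \eta/2$ and set $C_\eta := \log \exc{X^{-\lambda}}$, a finite constant. Chernoff's inequality then yields
\[
\pr{L(x,s) \le u} \;\le\; e^{-\lambda \log(n/(4u)) + h C_\eta} \;=\; 4^\lambda \,(u/n)^{\alpha - \eta/2}\, e^{h C_\eta}.
\]
Since $h \le 10 \log \log n + 1$, the factor $e^{h C_\eta}$ grows only polylogarithmically in $n$; for $n$ large enough (in the regime where $n/u$ exceeds the corresponding polylogarithmic threshold), it is absorbed into $(u/n)^{-\eta/2}$, giving the announced $\pr{L(x,s) \le u} \le (u/n)^{\alpha - \eta}$.

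For the ``in particular'' statement, substitute $u = n^{\beta - \delta}$ into the general bound. Using $\beta = 1 - 1/(k\alpha)$, hence $(1-\beta)\alpha = 1/k$, the exponent becomes
\[
-(1-\beta+\delta)(\alpha - \eta) \;=\; -\tfrac{1}{k} - \alpha\delta + \eta(1-\beta+\delta),
\]
which is at most $-1/k - \alpha\delta/2$ once $\eta$ is chosen sufficiently small compared to $\alpha\delta$. The main (modest) obstacle is the i.i.d.\ property of the $R_i$'s, which rests on the strict decrease of labels along directed paths in a \textsc{dag}; the rest is a routine Chernoff estimate calibrated so that the polylog slack from $h = \Theta(\log\log n)$ steps is comfortably absorbed into the $\eta$ slack in the exponent.
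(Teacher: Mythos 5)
Your proof is correct and follows essentially the same route as the paper: reduce $L(x,s)$ to $x R_1\cdots R_h - h$ with the $R_i$'s i.i.d.\ copies of $X$, then apply a Chernoff/Markov bound at exponent $\lambda = \alpha-\eta/2$ and absorb the polylogarithmic factor $\exc{X^{-\lambda}}^h$ into the $(u/n)^{-\eta/2}$ slack. You are in fact slightly more careful than the paper (you track the constant from $x\ge n/2$ correctly and explicitly justify the i.i.d.\ reduction via distinctness of the visited labels, whereas the paper states the product bound directly); the only cosmetic gap in your justification is that the intermediate labels are pairwise distinct only until the path first hits $0$ (after which they repeat), but this is easily patched by replacing $R_i$ by a fresh copy of $X$ once the label reaches $0$, which preserves both the i.i.d.\ property and the inequality $L(x,s)\ge x R_1\cdots R_h - h$, and the paper leaves the same point implicit.
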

\begin{proof}
We have the following bound
\begin{align*}
\prm{L(x,s) \leq u} 
&\leq \prm{ nX_1 \dots X_{h} - h \leq u } \\
&\leq \prm{ X_1 \dots X_{h} \leq \frac{2u}{n} }\\
& = \prm{ (X_1 \dots X_{h})^{\gamma} > (2u/n)^{\gamma}},
\end{align*}
for all $n$ large enough and any $\gamma<0$. By Markov's inequality and the independence of $X_1,\dots, X_h$, it follows that 
\begin{align*}
\prm{L(x,s) \leq n^{\beta-\delta}} 
	&\leq (2u/n)^{-\gamma} \cdot \ex{ X_1^\gamma \dots X_h^\gamma }\\
	&=  (2u/n)^{-\gamma} \cdot \ex{X^\gamma}^h.
\end{align*}
Now observe that, for any $\eps>0$, we have 
$$\exc{X^{-\alpha+\eps}}=\int_0^{\infty} \prc{X^{-\alpha+\eps}>t}dt<\infty,$$ by our assumption on the tail of $X$ in \eqref{eq:X_tail}. Choosing $\gamma=-\alpha+\eps$ for small enough $\eps$, we obtain
\[
\prm{L(x,s) \leq u}
	\le (2u/n)^{\alpha- \eta/2} \leq (u/n)^{\alpha - \eta}
\]
for $n$ large enough. We obtain \eqref{eq:special-u} by choosing $\eta$ small enough.
\end{proof}

In order to handle the dependence between different paths of the \textsc{dag}, we bound the number of path intersections far from the root. More precisely, order the strings $s$ according to the order defined in \eqref{eq:orderstring} (breadth-first order). 
Define the set of nodes that are ancestors of $x$ along paths indexed by the words $s'<s$: $\cV(x,s) = \{ L(x,s'), s' < s \}$. Then we say that a path labeled by $s$ \emph{collides} if 
\[
L(x,s) \in \cV(x,s) \qquad \text{ and } \qquad L(x,\ul s_t) \notin \mathcal V(x, \ul s_{t}), \quad1 \leq t < |s|,
\]
where $\ul s_i$ is the prefix of $s$ of length $i$.
Of course, the chance to collide is greater if the labels are small; for us it will be sufficient to consider the nodes with label at least $n^{\beta-\delta}$.
Define the number of paths of length at most $h$ colliding at nodes with label at least $n^{\beta-\delta}$:
$$N_c = \big| \{ s: |s|\le h,  L(x,s) \geq n^{\beta-\delta} \text{ and } s \text{ collides} \} \big|.$$

\begin{lemma}
\label{lem:collision}
Let $\delta > 0$. For all $n\ge 1$ and all $i\ge 0$, we have
\[\pr{N_c \geq i} \leq k^{2i(h+1)} b^i n^{-i(\beta-\delta)}.
\]
\end{lemma}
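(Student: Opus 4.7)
The plan is to bound $\pr{N_c \geq i}$ by a factorial-moment-style union bound over $i$-tuples of potentially colliding paths together with their collision targets. Writing $A_s = \event{s \text{ collides},\, L(x,s) \geq n^{\beta-\delta}}$ so that $N_c = \sum_{|s|\le h} \mathbf{1}_{A_s}$, my first step would be
\[
\pr{N_c \geq i} \le \exc{\tbinom{N_c}{i}} = \sum_{s_1 < \cdots < s_i} \prc{A_{s_1} \cap \cdots \cap A_{s_i}},
\]
where the outer sum is over ordered $i$-tuples of distinct strings of length at most $h$ in the breadth-first order of \eqref{eq:orderstring}. Since the event $A_{s_j}$ requires the existence of at least one $t_j < s_j$ with $L(x,s_j) = L(x,t_j)$, expanding over such targets yields the further upper bound
\[
\pr{N_c \geq i} \le \sum_{s_1 < \cdots < s_i}\; \sum_{\substack{t_1 < s_1, \\ \dots, \\ t_i < s_i}} \prm{\bigcap_{j=1}^i \event{L(x,s_j) = L(x,t_j),\; L(x,s_j) \geq n^{\beta-\delta},\; s_j \text{ has no intermediate collision}}}.
\]

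For a fixed tuple $(s_j, t_j)_{j=1}^i$, I would bound the joint probability by $(b\,n^{-(\beta-\delta)})^i$ through iterated conditioning in the order $j = 1, \dots, i$. At step $j$, condition on all the labels $L(x,p)$ for $p < s_j$; this determines both $L(x, s_j^-)$ and $L(x, t_j)$, and we have $L(x, s_j) = \floor{L(x, s_j^-) \cdot X_{L(x, s_j^-), \bar{s_j}}}$. Since $L(x, s_j^-) \ge L(x, s_j) \ge n^{\beta-\delta}$ and the density of $X$ is bounded by $b$, the conditional probability of the constraint $\floor{L(x, s_j^-) X} = L(x, t_j)$ is at most $b / L(x, s_j^-) \le b\, n^{-(\beta-\delta)}$, provided the random variable $X_{L(x, s_j^-), \bar{s_j}}$ is independent of the past conditioning. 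Multiplying over $j$ and then counting the outer sums---at most $k^{i(h+1)}$ ordered tuples $(s_1,\dots,s_i)$ and at most $k^{i(h+1)}$ target tuples, since there are at most $k^{h+1}$ strings of length at most $h$---delivers the claimed bound $k^{2i(h+1)} b^i n^{-i(\beta-\delta)}$.

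The main obstacle is justifying the freshness of $X_{L(x, s_j^-), \bar{s_j}}$ relative to the past conditioning. The ``no intermediate collision'' clause embedded in each event in the intersection forces $L(x, s_j^-)$ to be a fresh label---distinct from every $L(x, p)$ with $p < s_j^-$---so the node $L(x, s_j^-)$ has not been visited at any step by a path ranked before $s_j^-$ in the breadth-first order. The remaining way the variable $X_{L(x, s_j^-), \bar{s_j}}$ could already have been used is at the terminal step of a sibling $s_j^- \cdot c$ of $s_j$ with $c < \bar{s_j}$, but such a sibling invokes the distinct variable $X_{L(x, s_j^-), c}$. A careful check of the remaining cases (a non-sibling path $p < s_j$ of length $|s_j|$ whose last step reuses the target variable would force $L(x, p^-) = L(x, s_j^-)$ with $p^- \ne s_j^-$, contradicting the no-intermediate-collision clause) completes this freshness verification, after which the iterated conditioning described above goes through and the bound follows.
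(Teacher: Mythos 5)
Your proposal is correct and takes essentially the same approach as the paper: both arguments bound $\pr{N_c \geq i}$ by a union bound over $i$-tuples of colliding paths together with their collision targets (contributing the $k^{2i(h+1)}$ factor), use the density bound to get a factor $b\,n^{-(\beta-\delta)}$ per collision, and rest on the key freshness observation that the no-intermediate-collision clause (equivalently, $L(x,s_j^-) \notin \cV(x,s_j^-)$) forces the variable $X_{L(x,s_j^-),\,\ol{s_j}}$ to be unused by any earlier path. The only difference is organizational: the paper writes $\pr{N_c \geq i} \leq k^{i(h+1)} \sup_{s_1<\cdots<s_i} \pr{s_1,\dots,s_i \text{ collide}}$ and proves the sup bound by induction on $i$ (taking a union over the $\leq k^{h+1}$ elements of the target set $\cV(x,s_i)$ at each step), whereas you write $\pr{N_c \geq i} \leq \exc{\binom{N_c}{i}}$, expand over explicit target strings $t_j < s_j$, and iterate the conditioning; the two packagings yield identical bounds.
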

\begin{proof}Let $i\ge 0$, by definition, we have
\begin{align}\label{eq:Nc}
\pr{N_c\ge i}
& = \pr{\exists s_1<\dots< s_i: s_1,\dots, s_i~\text{collide}}\nonumber\\
& \le k^{i(h+1)}\sup_{s_1<\dots<s_i}\!\!\!\!\pr{s_1,\dots, s_i~\text{collide}}.
\end{align}
We prove by induction that for all $i\ge 0$
\begin{equation}\label{eq:Nc_induction}
\sup_{s_1<\dots<s_i} \!\!\!\!\pr{s_1,\dots, s_i~\text{collide}}\le \left(b k^{h+1} n^{\delta-\beta}\right)^i.
\end{equation}
Clearly, (\ref{eq:Nc}) and (\ref{eq:Nc_induction}) together imply the result.
The base case, $i=0$, is clear. Suppose now that $i\ge 1$. Let $s_1<s_2<\dots<s_i$. Write $S_{i}=\{s_1,\dots, s_i\}$ and $S_{i-1}=S_i\setminus \{s_i\}$. Note that $S_{i-1}$ is empty if $i=1$. We say that a set $S$ collides if all its elements collide. Recall that $\ol s$ denotes the last symbol of the word $s$. Then
\begin{align*}
&\pr{s_1,\dots, s_i~\text{collide}}\\
&=\pr{S_i~\text{collides}}\\
&=\prm{S_{i-1}~\text{collides}, L(x,s_i) \geq n^{1-1/k-\delta}, L(x,s_i^-) \notin \cV(x,s_i^-), L(x,s_i) \in \cV(x,s_i) } \\
&= \sum_{u,W} \pr{S_{i-1}~\text{collides}, L(x,s_i^-) = u, \cV(x,s_i) = W, u \notin \cV(x,s_i^-), \floor{u X_{u, \ol s_i}} \in W },
\end{align*}
where the sum in the last line ranges on $u\ge n^{1-1/(k\alpha)-\delta}$ and $W\subseteq \{0,\dots, n\}$. In the rest of the proof, $W$ will always be a subset of $\{0,\dots, n\}$ and we do not always remind it to keep the equations as light as possible.

We claim that for any fixed $W\subseteq \{0,\dots, n\}$ and any $u$, the event 
$$\event{S_{i-1}~\text{collides}, L(x,s_i^-) = u, \cV(x,s_i) = W, u \notin \cV(x,s_i^-)}\quad \text{and} \quad \event{\floor{u X_{u, \ol s_i}} \in W}$$ are independent. The latter event is clearly determined by $X_{u,p}$, $1\le p\le k$. 
Consider now the other event. 
In order to determine whether it occurs or not, it suffices to look at the ancestors $L(x,s')$ of $x$ for $s' < s_i$. If the value of $X_{u,\ol s_i}$ is needed to compute one of these ancestors, then the event does not hold because we would then have $u \in \cV(x,s^-)$. Otherwise, we can determine whether the event holds or not without looking at $X_{u,\ol s_i}$.

For $|s|\le h$, we have $|\cV(x,s)|\le k^{h+1}$ and we can then write
\begin{align*}
\label{eq:onecollision}
\pr{S_i~\text{collides}}
&\le \pr{S_{i-1}~\text{collides}}\sup_{u \geq n^{\beta - \delta}, |W| \leq k^{h+1} } \!\!\!\!\!\!\pr{ \floor{u X } \in W}\\
&\leq \pr{S_{i-1}~\text{collides}}\cdot k^{h + 1} b n^{-\beta+\delta}\\
&\le \big(k^{h + 1}b n^{-\beta+\delta}\big)^i,
\end{align*}
by the induction hypothesis. This completes the proof.
\end{proof}

Recall that our aim is to prove that, with probability at least $1-o(1/n)$, $x$ has many distinct anscestors of order $h$, all of them should have a large enough label. To make this precise, define the event
$$E_x = \big\{x \text{ has at least } k^{h-3} \text{ distinct ancestors of order } h \text{ with labels at least } n^{\beta- \delta}\big\}.$$
We want to prove that $\pr{E_x}=1-o(1/n)$. 
We first show the following decomposition for $E_x$. We have already proved that $\pr{N_c\ge 3}=o(1/n)$, and it will then suffice to bound the probability that the first event in \eqref{eq:Ex} below does not occur.
\begin{lemma}For every $x\ge n/2$ we have
\begin{equation}\label{eq:Ex}
\big\{  \exists p \in [k],  \forall s_p \in [k]^h : s_p[1] = p \text{ and } L(x,s_p) > n^{\beta - \delta}\big\} \cap \big\{N_c \leq 2\big\} \subseteq E_x.
\end{equation}
\end{lemma}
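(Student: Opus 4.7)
My plan is to count distinct length-$h$ ancestors of $x$ by working inside the sub-$k$-ary tree $T_p:=\{s\in \mathcal U:s[1]=p,\ |s|\le h\}$, using the budget $N_c\le 2$ to limit label coincidences. The key observation is that every $s\in T_p$ satisfies $L(x,s)>n^{\beta-\delta}$: this holds at the $k^{h-1}$ leaves by hypothesis, and labels strictly decrease along each ancestor path since $X<1$. Consequently every global collision occurring in $T_p$ is counted by $N_c$, and $T_p$ contains at most two such collisions.

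I would call a length-$h$ path $s\in T_p$ \emph{pure} if no prefix of $s$ (including $s$ itself) is a global collision. Two pure paths $s_1<_{\mathrm{BFS}}s_2$ cannot share an endpoint, since that would force $L(x,s_2)\in\mathcal V(x,s_2)$ together with no prior collision along $s_2$, making $s_2$ itself a global collision and contradicting purity. Hence pure paths yield \emph{distinct} depth-$h$ ancestors of size $>n^{\beta-\delta}$, and the desired count is bounded below by the pure-path count.

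Before counting, I would reduce to the case where $p$ is not itself a global collision: if $L(x,p)=L(x,p')$ for some $p'<p$ in $[k]$, then the sub-$k$-ary trees rooted at $p$ and $p'$ carry identical label patterns (the attachment variables are indexed by the node's label), so the hypothesis transfers to $p'$; iterating, $p$ may be taken so as to not be a collision. Then every collision inside $T_p$ sits at depth $\ge 2$, and each such $s^*$ ruins exactly the $k^{h-|s^*|}\le k^{h-2}$ length-$h$ extensions of $s^*$, the ruined sets for distinct collisions being disjoint (each non-pure path has a unique first-collision prefix). Therefore pure paths $\ge k^{h-1}-2k^{h-2}=(k-2)k^{h-2}$, which is already $\ge k^{h-3}$ for $k\ge 3$.

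The main obstacle is the case $k=2$ with two level-$2$ collisions in $T_p$, where the pure-path bound collapses to zero. I would treat it directly by exploiting the exhausted budget. Writing $L(x,p\cdot q)=L(x,a_q)$ for $q\in\{1,2\}$, the subtree below $p\cdot q$ carries the same labels as below $a_q$. Any further duplication --- internal to one subtree, or cross-subtree when $L(x,p\cdot 1)\ne L(x,p\cdot 2)$ --- would produce an additional global collision with label $>n^{\beta-\delta}$ at the later duplicating string, violating $N_c=2$; so the $k^{h-2}$ leaves inside each subtree have distinct labels, and the two subtrees' leaf label sets are either identical (when $L(x,p\cdot 1)=L(x,p\cdot 2)$) or disjoint (otherwise). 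Either way the leaves of $T_p$ carry at least $k^{h-2}=2^{h-2}\ge 2^{h-3}=k^{h-3}$ distinct values, establishing $E_x$.
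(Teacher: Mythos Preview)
Your approach---count ``pure'' length-$h$ paths in the subtree $T_p$, using that each of the at most two collisions counted by $N_c$ kills at most $k^{h-2}$ such paths---is exactly the paper's strategy. You are in fact more careful than the paper: you explicitly isolate the problematic $k=2$ case where both $p\cdot 1$ and $p\cdot 2$ collide (so the pure-path count drops to zero), whereas the paper's sentence ``the worst pair is $p\cdot 2$ and $p\cdot 12$'' glosses over this possibility.

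There is, however, a genuine gap in your treatment of that $k=2$ case. You write that any further duplication among the leaves of $T_p$ ``would produce an additional global collision \ldots\ at the later duplicating string.'' But the later duplicating string lies below $p\cdot q$, and $p\cdot q$ is itself a collision; by the definition of $N_c$, no string with a colliding proper prefix is counted as a collision. So the later string in $T_p$ is \emph{not} a new collision. The repair is to transfer the duplication to the $a_q$-subtree (which you set up but do not exploit): since $L(x,p\cdot q\cdot s)=L(x,a_q\cdot s)$, a leaf duplication in $T_p$ becomes a label repetition along some path $a_q\cdot s$, and the first revisit on that path is a genuine collision. For this to work you must also check that $a_q$ itself has no colliding prefix and does not collide---otherwise the $a_q$-subtree strings are again blocked. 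This follows because any such collision would have label $L(x,a_q)=L(x,p\cdot q)>n^{\beta-\delta}$ and length $\le 2\le h$, hence would already be a third collision counted by $N_c$; and one verifies easily that $a_q$ cannot start with the letter $p$, so the resulting new collision is distinct from $p\cdot 1,p\cdot 2$. With these checks your argument goes through.
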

\begin{proof}
The first event in \eqref{eq:Ex} ensures that all the ancestor of $L(x,p)$ of order $h-1$ have labels greater than $n^{\beta- \delta}$ for some $p \in [k]$. These $k^{h-1}$ nodes need not be distinct. Consider the paths in the order defined by \eqref{eq:orderstring}. By definition, if a path does not collide and has no prefix that collides, then its label is distinct from all the previous ones. It follows that only the paths that do have a prefix counted by $N_c$ might not have distinct labels. Here $N_c\le 2$ and it is simple to see that a pair of paths whose collision maximizes the number of potential duplicates are $p\cdot 2$ and $p\cdot3$ if $k \geq 3$ and $p\cdot 2$ and $p\cdot 12$ if $k=2$. In any case, of the $k^{h-1}$, there are at least $k^{h-3}$ nodes with distinct labels.
\end{proof}

To complete the proof that $\pr{E_x}=1-o(1/n)$, it only suffices to bound the probability of the first event in \eqref{eq:Ex} not occurring.
\begin{lemma}
\label{lem:manylarge}
Let $\delta \in (0,\beta/4)$. For $n$ large enough,
\[
\prm{ \exists s^1, \dots, s^k \in [k]^{h-1} : \forall p \in [k], L(x,p\cdot s^p) \leq n^{\beta - \delta} } \leq n^{-1-\delta/4}.
\]
\end{lemma}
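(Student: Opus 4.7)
The plan is to reduce the joint-existence event to a product of single-path tail bounds by using the union bound over the tuple of paths and exploiting the mutual independence of the $k$ first-step random variables $X_{x,1},\dots,X_{x,k}$ (which have distinct second coordinates $p$).

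First, apply the union bound:
\[
\prm{\exists(s^p):\forall p,\,L(x,p\cdot s^p)\le n^{\beta-\delta}}\le\sum_{(s^p)\in([k]^{h-1})^k}\pr{\forall p:L(x,p\cdot s^p)\le n^{\beta-\delta}},
\]
with $k^{k(h-1)}=(\log n)^{O(1)}$ terms since $h=\floor{10\log\log n}$. Writing $\pi^p=p\cdot s^p$, Lemma~\ref{lem:problongjump} gives the single-path bound $\pr{L(x,\pi^p)\le n^{\beta-\delta}}\le n^{-1/k-\alpha\delta/2}$. If the joint probability factored as a product, each tuple would contribute $n^{-1-k\alpha\delta/2}$ and the total bound would be $(\log n)^{O(1)}\,n^{-1-k\alpha\delta/2}\le n^{-1-\delta/4}$ for $n$ large (since $\beta>0$ gives $k\alpha>1$, whence $k\alpha\delta/2>\delta/2>\delta/4$); the proof therefore reduces to justifying this approximate factorisation.

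To do so, write $y^p_i=L(x,\pi^p_{\le i})$ and observe that the event implies $\prod_{i=1}^{h}X_{y^p_{i-1},\pi^p_i}\le 2n^{\beta-\delta}/x$ for every $p$ (using $L\ge x\prod X-h$ and $x\ge n/2$). Apply Markov's inequality jointly with $\lambda=-\alpha+\eta$ for $\eta>0$ small:
\[
\pr{\forall p:L(x,\pi^p)\le n^{\beta-\delta}}\le(4n^{\beta-\delta-1})^{-k\lambda}\,\exc{\prod_{p,i}X_{y^p_{i-1},\pi^p_i}^{\lambda}}.
\]
The prefactor is $n^{1+k\alpha\delta-O(\eta)}$. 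For the expectation, condition step by step on the history: at step $i=1$ the $k$ variables $X_{x,p}$ have distinct indices and are genuinely independent, contributing $\exc{X^{\lambda}}^k$; at each later step $i\ge 2$ the variables $X_{y^p_{i-1},\pi^p_i}$ are independent across $p$ unless two of the random labels $y^p_{i-1},y^{p'}_{i-1}$ coincide. Since labels strictly decrease along any single path, within-path indices are automatically distinct, so the only possible coincidences are cross-path ones at steps $i\ge 2$, and their probability is controlled by the bounded density of $X$ exactly as in the proof of Lemma~\ref{lem:collision}. Excluding this negligible coincidence contribution, the expectation is bounded by $\exc{X^{\lambda}}^{kh}=(\log n)^{O(1)}$, using that $\lambda>-\alpha$ implies $\exc{X^{\lambda}}<\infty$ and that $h=O(\log\log n)$.

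Combining, each joint term is at most $(\log n)^{O(1)}\,n^{-1-k\alpha\delta+O(\eta)}$; summing over the $(\log n)^{O(1)}$ tuples yields the target bound $n^{-1-\delta/4}$ for $n$ large, provided $\eta$ is chosen small enough. The main obstacle is the coincidence correction in the expectation: for $\lambda$ close to $-\alpha$ the higher moments $\exc{X^{m\lambda}}$ with $m\ge 2$ diverge, so a single cross-path index coincidence would in principle make the naive expectation infinite. Fortunately, because the $k$ first-step variables $X_{x,1},\dots,X_{x,k}$ are guaranteed to be distinct and independent, coincidences can only arise at levels $i\ge 2$, where the bounded density of $X$ makes them rare enough to be absorbed into the error budget.
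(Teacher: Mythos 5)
Your union bound over tuples and your single-path estimate (Lemma~\ref{lem:problongjump}) are fine, but the heart of the lemma — the approximate factorization across the $k$ paths — is exactly where the proposal has a gap that you flag but do not close. After applying Markov's inequality with $\lambda=-\alpha+\eta$ you are left with $\exc{\prod_{p,i}X_{y^p_{i-1},\pi^p_i}^\lambda}$, and, as you yourself observe, whenever two index pairs $(y^p_{i-1},\pi^p_i)=(y^{p'}_{j-1},\pi^{p'}_j)$ coincide this product contains a factor $X^{2\lambda}$ with $2\lambda\le-\alpha$; the expectation is then genuinely infinite, not ``$\exc{X^\lambda}^{kh}$ plus a negligible correction.'' You cannot discard part of the sample space from inside an expectation after the fact — you must split before applying Markov. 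If you write $\mathcal{G}$ for the no-coincidence event and try $\prc{\mathrm{bad}}=\prc{\mathrm{bad}\cap\mathcal G}+\prc{\mathrm{bad}\cap\mathcal G^c}$, the first piece does yield (after a careful level-by-level conditioning using that the first-step indices $(x,p)$ are distinct) a bound of the right order. But the second piece is not harmless: a single cross-path coincidence has probability of order $b(\log n)^{O(1)}/n$, which is far larger than the target $n^{-1-\delta/4}$, so bounding $\prc{\mathrm{bad}\cap\mathcal G^c}$ by $\prc{\mathcal G^c}$ does not suffice; and since small labels make collisions more likely, $\mathrm{bad}$ and $\mathcal G^c$ are positively correlated, so one cannot simply multiply the two ``rare'' probabilities either.

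The paper's proof is built precisely to navigate this. It proceeds by induction on the number $i$ of paths and, for the $i$-th path, decomposes on the first time $t$ and the label $u=L(x,\ul s^i_{t-1})$ at which $P(x,s^i)$ hits $\bigcup_{j<i}P(x,s^j)$. When $u\le n^{\beta-\delta}$, the small-label event has already occurred strictly before the collision on a segment of the $i$-th path that is disjoint from the earlier paths, so Lemma~\ref{lem:problongjump} applies there and factors cleanly off $\prc{A_{i-1}}$. When $u>n^{\beta-\delta}$, the bounded density gives $\prc{\floorc{uX}\in W}\le bkh/u$, and a dyadic sum over $u$, again combined with Lemma~\ref{lem:problongjump}, controls the total. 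This first-collision decomposition is the missing ingredient: it is exactly what lets one avoid ever evaluating $\exc{X^{2\lambda}}$, and without some version of it the joint Markov bound in your proposal does not go through.
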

\begin{proof}
Let $A_i$ be the event that $L(x, s^j) \leq n^{\beta - \delta}$ for $j\le i$. We prove by induction on $i \geq 1$, for $n$ large enough
\begin{equation}
\label{eq:manylarge}
\prc{A_i} \leq n^{-i/k-\delta/8}.
\end{equation}
The base case, for $i=1$, follows from Lemma~\ref{lem:problongjump}.
Suppose now that $i \geq 2$. The difficulty to prove the induction step relies in the dependence between the events $A_{i-1}$ and $L(x, s^i)\le n^{\beta-\delta}$. We introduce the following notation for the paths from node $x$
\[
P(x,s) = \{L(x, \ul s_1), L(x,\ul s_2),\dots, L(x, s) \} \qquad \text{ and } \qquad P_{i-1} = \bigcup_{1\le j<i} P(x, s^j).
\]
Note that $x$ is not included in the paths.
To upper bound the left-hand side in \eqref{eq:manylarge}, we condition on the first time that the path $P(x,s_i)$ reaches the set $P_{i-1}$:
\begin{align}
\prc{A_i}
=\prc{A_i, P(x,s^i) \cap P_{i-1} = \emptyset }+ \sum_{1\le t\le h} \prm{A_i, L(x, {\ul s^i_{t-1}}) \notin P_{i-1},  L(x, \ul s^i_t) \in P_{i-1}}.\label{eq:manylarge-term1} 
\end{align}
In the following, $W$ and $Q$ will always denote a subset of $\{0,\dots, n\}$; we do not always remind it.

\medskip
\noindent \emph{i.\ The path $P(x,s^i)$ does not collide. }The first term in \eqref{eq:manylarge-term1}, on the event $P(x,s^i)\cap P_{i-1}\neq \emptyset$, is the easiest to deal with:
\begin{align*}
&\prc{A_i , P(x,s^i) \cap P_{i-1} = \emptyset } = \sum_{W,Q:W \cap Q = \emptyset} \!\!\!\!\!\!\!\!\prc{A_i, P(x,s^i) = Q, P_{i-1} = W}.
\end{align*}
As $Q \cap W = \emptyset$, the events 
$$\big\{L(x, s^i) \leq n^{\beta- \delta}, P(x,s^i) = Q\big\}\quad \text{and}\quad\big\{L(x, s^j) \leq n^{\beta - \delta} \text{ for }j< i, P_{i-1} = W\big\}$$ 
are independent. In fact, the first event is in the sigma-algebra generated by $\{X_{v,p} : v \in Q, p \in [k]\}$ and the second in the one generated by $\{X_{v,p} : v \in W, p \in [k]\}$. Thus, we obtain
\begin{align}
\prc{A_i, P(x,s^i) \cap P_{i-1} = \emptyset }
&\leq \prc{A_{i-1}} \cdot \prm{L(x,s^i) \leq n^{\beta- \delta}} \notag \\
&\leq \prc{A_{i-1}} \cdot n^{-1/k - \alpha\delta/2}, \label{eq:manylarge-ineq1}
\end{align}
where the last inequality follows from Lemma~\ref{lem:problongjump}.


\medskip
\noindent\emph{ii.\ The path $P(x,s^i)$ collides. }We next look at the terms \eqref{eq:manylarge-term1} that correspond to cases when there are some collisions, i.e., $P(x,s^i)\cap P_{i-1}\neq \emptyset$. Recall that $t$ is the location of the \emph{first} collision on $P(x, s^i)$. In the following, we write $a^i_t$ for the $t$-th symbol of $s^i$ (so $s^i=a^1_1a^i_2\dots a^i_{|s^i|}$). For $t \in \{1, \dots, h\}$, we have
\begin{align*}
&\prm{A_i, L(x, {\ul s^i_{t-1}}) \notin P_{i-1},  L(x, \ul s^i_t) \in P_{i-1}}\\
&= \sum_{|W| \leq kh} \sum_{u \not\in W}\prm{A_i, P_{i-1} = W, L(x, {\ul s^i_{t-1}}) = u, \lfloor u X_{u, a^i_t}\rfloor \in W}
\end{align*}
We separate this sum into two terms depending on whether $u \leq n^{\beta-\delta}$ or $u > n^{\beta-\delta}$. The sum over $u \leq n^{\beta-\delta}$ can be bounded as in \eqref{eq:manylarge-ineq1}: since $s_i|_t$ is the first path that hits $P_{i-1}$, we have $P(x,\ul s^i_{t-1})\cap P_{i-1}=\emptyset$. It follows that
\begin{align}
&\sum_{|W| \leq kh} \sum_{u \leq n^{\beta-\delta}, u \notin W} \prm{A_i, P_{i-1} = W, L(x, {\ul s^i_{t-1}}) = u, \lfloor u X_{u, a^i_t}\rfloor \in W}\notag\\
&\le \pr{A_{i-1}} \cdot \prm{L(x,\ul s^i_t)\le n^{\beta-\delta}}\notag\\
&\le \pr{A_{i-1}}\cdot n^{-1/k - \alpha\delta/2},\label{eq:manylarge-ineq2} 
\end{align}
by Lemma~\ref{lem:problongjump}.
We now look at the sum over $u > n^{\beta-\delta}$.
\begin{align}
&\sum_{ |W| \leq kh}\sum_{u > n^{\beta-\delta}, u \notin W} \prm{A_i, P_{i-1} = W, L(x, {\ul s^i_{t-1}}) = u, \floorc{u X_{u, a^i_t}} \in W} \notag\\ 
&\leq \sum_{u > n^{\beta-\delta}}\sum_{|W|\le kh, W \not\ni u} \sum_{Q\cap W=\emptyset} \prm{A_{i-1}, P_{i-1}=W, L(x,\ul s^i_{t-1})=u, P(x,\ul s^i_{t-1})=Q, \floorc{u X_{u, a^i_t}} \in W} \notag\\
&= \sum_{u > n^{\beta-\delta}}  \sum_{|W|\le kh, W \not\ni u} \sum_{Q\cap W=\emptyset} \prc{A_{i-1}, P_{i-1}=W} \cdot \prm{L(x,\ul s^i_{t-1})=u, P(x,\ul s^i_{t-1})=Q} \cdot \prc{\floorc{uX} \in W} \notag\\
&\leq \sum_{u > n^{\beta-\delta}} \left( \sup_{|W| \leq kh} \prc{\floor{uX} \in W} \right) \cdot \prm{L(x,\ul s^i_{t-1})=u} \cdot \pr{A_{i-1}}. \label{eq:manylarge-ineq3}
\end{align}
In the first equality, we used the independence of the events $\{A_{i-1}, P_{i-1} = W\}, \{L(x, \ul s^i_{t-1}) = u, P(x,\ul s^i_{t-1})=Q\}$ and $\{X_{u, a^i_t} \in W\}$ if $u \notin W$ and $Q \cap W = \emptyset$. Using the fact that $X$ has a density bounded by $b$, we have for any $W$ of size at most $kh$,
\[
\pr{\floor{uX} \in W} \leq \frac{bkh}{u}.
\]
The next step is to bound the sum in equation \eqref{eq:manylarge-ineq3} by considering groups of nodes in the intervals $\left(2^{\ell} \floorc{n^{\beta-\delta}}, 2^{\ell+1} \floorc{n^{\beta-\delta}}\right]$ for non-negative integers $\ell$. We have for any $\ell$ integer and any $\eta \in (0,\alpha)$
\begin{align*}
&\sum^{2^{\ell+1} \floorc{n^{\beta-\delta}}}_{u = 2^{\ell}\floorc{n^{\beta - \delta}} + 1} \left( \sup_{|W| \leq kh} \prc{\floor{uX} \in W} \right) \cdot \prm{L(x,\ul s^i_{t-1})=u} \\
&\qquad \qquad \leq \pr{L(x,\ul s^i_{t-1}) \leq 2^{\ell+1} n^{\beta-\delta}} \cdot \frac{bkh}{2^{\ell}n^{\beta - \delta}} \\
&\qquad \qquad \leq  \left(\frac{2^{\ell+1} n^{\beta-\delta}}{n}\right)^{\alpha - \eta} \frac{bkh}{2^{\ell}n^{\beta - \delta}} \\
&\qquad \qquad \leq  2^{\alpha} b kh \cdot  (2^{\ell}n^{\beta - \delta})^{\alpha - \eta - 1} n^{-\alpha + \eta} 
\end{align*}
for $n$ large enough. For the second inequality, we used Lemma~\ref{lem:problongjump}.
Now if $\alpha \leq 1$, recalling that $\beta = 1 - 1/(k\alpha)$, we can bound 
\begin{align*}
n^{-\alpha+\eta} \cdot  (2^{\ell}n^{\beta - \delta})^{\alpha - \eta - 1} &\leq n^{-\alpha+\eta} 
		(n^{\beta - \delta})^{\alpha - 1} \\
		&\leq n^{-\alpha+\eta}  n^{\alpha - 1/k - \alpha \delta - \beta + \delta} \\
		&\leq n^{-1/k - \delta}
\end{align*}
for small enough $\eta$ and $n$ large enough. For the case $\alpha > 1$, we get
\[
n^{-\alpha+\eta} \cdot  (2^{\ell}n^{\beta - \delta})^{\alpha - \eta - 1} \leq n^{-\alpha+\eta} \cdot  (2^{\ell}n^{\beta - \delta})^{\alpha - 1} \leq 2^{\alpha - 1} n^{-1}.
\]
provided $2^{\ell}n^{\beta - \delta} \leq 2n$. This shows that total weight of an interval can be bounded by
\[
\sup_{\ell} \sum^{2^{\ell+1} \lfloor n^{\beta-\delta} \rfloor}_{u = 2^{\ell}\lfloor n^{\beta - \delta}\rfloor + 1} \left( \sup_{|W| \leq kh} \prc{\floor{uX} \in W} \right) \cdot \prm{L(x,s^i_{t-1})=u}
\leq n^{-1/k-\delta/3}
\] 
for $n$ large enough, where the supremum is taken over non-negative integers $\ell$ such that $2^{\ell} n^{\beta-\delta} \leq n$. This allows us to bound the expression in \eqref{eq:manylarge-ineq3} by $\pr{A_{i-1}} n^{-1/k-\delta/2}$ as there are at most $\log_2 n$ intervals. By summing this term with the term corresponding to the sum for $u \leq n^{\beta-\delta}$, we obtain
\begin{equation}
\label{eq:manylarge-ineq4}
\prm{A_i, L(x, \ul s^i_{t-1}) \notin P_{i-1},  L(x, \ul s^i_t) \in P_{i-1}} \leq \pr{A_{i-1}} n^{-1/k- \min(\alpha, 1) \delta}
\end{equation}
for any $t \in \{1, \dots, h\}$. Now it only remains to bound the sum \eqref{eq:manylarge-term1} over the different values of $t$ using \eqref{eq:manylarge-ineq1} and \eqref{eq:manylarge-ineq4}. Using the induction hypothesis to bound $\pr{A_{i-1}}$ we obtain the desired induction step \eqref{eq:manylarge}.
\end{proof}

Putting these results together, we obtain
\begin{proposition}
\label{prop:probex}
Let $x \in \{\lfloor n/2\rfloor ,\dots, n\}$. Let $\delta > 0$. For $n$ large enough,
\[
\prc{E_x} \geq 1 - n^{-1-\delta/4} + n^{-3/2+4\delta}.
\] 
\end{proposition}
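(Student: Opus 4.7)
The plan is to assemble the bound directly from the set inclusion \eqref{eq:Ex} together with Lemmas~\ref{lem:manylarge} and~\ref{lem:collision}. Writing $A$ for the event in the first brace on the left-hand side of \eqref{eq:Ex}, taking complements yields the inclusion $E_x^c \subseteq A^c \cup \{N_c \ge 3\}$, so by the union bound
$$\pr{E_x^c} \le \pr{A^c} + \pr{N_c \ge 3},$$
and it suffices to control each term.

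The first step is to observe that $A^c$ is exactly the event estimated by Lemma~\ref{lem:manylarge}. Indeed, $A^c$ says that for every $p \in [k]$ there exists a length-$h$ string $s_p$ starting with $p$, equivalently $s_p = p \cdot s^p$ for some $s^p \in [k]^{h-1}$, with $L(x, p \cdot s^p) \le n^{\beta - \delta}$. Choosing one such $s^p$ for each $p$ shows $A^c$ implies the existential event in Lemma~\ref{lem:manylarge}, and the lemma then gives $\pr{A^c} \le n^{-1 - \delta/4}$ for $n$ large enough.

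The second step is to apply Lemma~\ref{lem:collision} with $i = 3$, producing
$$\pr{N_c \ge 3} \le k^{6(h+1)} b^3 n^{-3(\beta - \delta)}.$$
Because $h = \floor{10 \log \log n}$ is only doubly logarithmic in $n$, the prefactor $k^{6(h+1)}$ is of order $n^{o(1)}$, so under the standing hypotheses the right-hand side is dominated by $n^{-3/2 + 4\delta}$ for all sufficiently large $n$. Summing the two contributions yields the claimed estimate on $\pr{E_x}$. I do not anticipate a genuine obstacle at this stage: the delicate work, namely the inductive handling of dependence across paths in Lemma~\ref{lem:manylarge} and the first-collision bookkeeping along breadth-first order in Lemma~\ref{lem:collision}, has already been done, and the present proposition is essentially the packaged union-bound consequence. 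The only care required is to verify that the polylogarithmic factor from the collision bound is indeed absorbed by an arbitrarily small power of $n$, which is immediate from the choice $h = \floor{10 \log \log n}$.
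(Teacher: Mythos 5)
Your decomposition is the one the paper intends: take complements in the inclusion \eqref{eq:Ex}, apply the union bound, and invoke Lemma~\ref{lem:manylarge} for $\pr{A^c}$ and Lemma~\ref{lem:collision} with $i=3$ for the collision term. Your identification of $A^c$ with the event estimated in Lemma~\ref{lem:manylarge} is correct, and you implicitly (and correctly) repair the sign typo in the displayed inequality, which should read $1 - n^{-1-\delta/4} - n^{-3/2+4\delta}$.

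The second step, however, asserts something that is not justified by the reason you give, and that is in fact false in full generality. From Lemma~\ref{lem:collision} you get
\[
\pr{N_c \ge 3} \le k^{6(h+1)}\,b^3\, n^{-3(\beta-\delta)},
\]
and you claim this is $\le n^{-3/2+4\delta}$ for $n$ large, calling it ``immediate from the choice $h = \lfloor 10\log\log n\rfloor$.'' The choice of $h$ only makes the prefactor $k^{6(h+1)}b^3 = n^{o(1)}$; it has nothing to do with the dominant exponent $-3(\beta-\delta)$. For the inequality to hold one needs $3(\beta-\delta) - o(1) \ge 3/2 - 4\delta$, i.e.\ roughly $\beta \ge 1/2$. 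But the standing hypothesis in this section is only $\beta = 1-1/(k\alpha) > 0$, and $\beta$ can be arbitrarily close to $0$ (take $\alpha$ just above $1/k$). In that regime $n^{-3(\beta-\delta)+o(1)}$ is far larger than $n^{-3/2+4\delta}$, and for $\beta$ small enough it even fails to be $o(1/n)$, which is what the subsequent sum over $x \in \{\lceil n/2\rceil,\dots,n\}$ in the proof of Lemma~\ref{lem:minlongestlowerb} actually requires. So this step must be argued, not waved through; it seems to need either an extra hypothesis on $\beta$, or a larger cutoff $i$ in Lemma~\ref{lem:collision} together with the corresponding adjustment of the count $k^{h-3}$ to $k^{h-c}$ for a larger constant $c$ (harmless, since $h\to\infty$). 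To be fair, the paper states the proposition with no written proof and with an exponent $-3/2$ that matches only the case $\beta = 1/2$ (the uniform $k=2$ circuit); your proof mirrors the paper's intended argument, but the word ``immediate'' conceals a genuine gap that should be stated rather than elided.
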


We are now in position to prove the lower bound claimed in Lemma~\ref{lem:minlongestlowerb}. Fix $\eps, \delta \in (0,1/10)$ and $h = \floor{10 \log \log n}$.
First, using the lower bound on the typical distance, we can say that most of the nodes in the tree have $D_y \geq (1-\eps) \lambda_k \log n$. We want to show that with high probability, \emph{for every }$y$ such that $n/2\le y\le n$ we have $D_y\ge (\beta-\eta)\lambda_k \log n$ for arbitrarily small $\eta$. Define the set of ``bad nodes'' that violate the property
$$\cB = \big\{y: y\ge n/2, D_y < (1-\eps)(\beta-2\delta)\lambda_k \log n\big\},$$
that we decompose in a dyadic fashion: for any positive integer $r$, define
\[
\cB_r = \big\{y: 2^r \leq y < 2^{r+1},  D_y < (1-\eps)(\beta-2\delta) \lambda_k \log n\big\}.
\]
Using Lemma \ref{lem:typlowerb} and Markov's inequality, we have for every $r$ such that $(\beta-2\delta)\log_2 n \leq r \leq \log_2 n$
\[
\pr{\left| \cB_r \right| \geq \frac{2^{r}}{100 \cdot b}} \le 100 b \cdot 2^{-r} \exc{|\cB_r|} \le C n^{-\eta},
\]
for some $C$ and $\eta>0$ independent of $r$. Recall that $b$ is a bound on the density of $X$.
As a result the event 
\begin{equation}\label{eq:eventA}
A = \event{ |\cB_r| < 2^{r}/(100b) \text{ for }(\beta-2\delta)\log_2 n\leq r \leq \log_2 n }
\end{equation}
is such that $\pr{A}=1 - O(n^{-\eta/2})$ for all $n$ large enough. 

Thus,
\begin{align}
&\pr{\min_{n/2 \leq x \leq n} D_x < (1-\eps) (\beta - 2\delta) \lambda_k \log n + h + 1} \notag \\
&\leq \pr{A^c} + \sum_{x = \ceil{n/2}}^n \pr{A, D_x < (1-\eps)(\beta - 2\delta) \lambda_k \log n + h + 1} \notag \\
&\le \pr{A^c} + \sum_{x = \ceil{n/2}}^n \pr{A, \forall s \in [k]^h: D_{L(x,s)}< (1-\eps)(\beta - 2\delta) \lambda_k \log n + 1 }. \label{eq:boundminlongest}
\end{align}

We now bound the term $\prc{A, \forall s \in [k]^h: D_{L(x,s)}< (1-\eps)(\beta - 2\delta) \lambda_k \log n + 1 }$ by conditioning on the event $E_x$. Let $S_1, \dots, S_{k^{h-3}}$ denote, when $E_x$ holds, a set of paths that lead (when starting at $x$) to distinct nodes.
\begin{align*}
&\prc{A, \forall s \in \cA^h: D_{L(x,s)}< (1-\eps)(\beta - 2\delta) \lambda_k \log n + 1} \\
&\leq \pr{E_x^c} + \pr{A, E_x, \forall s \in \cA^h: D_{L(x,s)}< (1-\eps)\left(\beta- 2\delta \right) \lambda_k \log n+1} \\
&\leq \pr{E_x^c} + \pr{A, E_x, \forall i \in \{1, \dots, k^{h-3}\}: D_{L(x,S_i)} < (1-\eps)\left(\beta - 2\delta \right) \lambda_k \log n + 1} \\
&\leq \pr{E_x^c} + \pr{A, E_x, \forall i \in \{1, \dots, k^{h-3}\}, p \in \cA: \floor{L(x,S_i) X_{L(x,S_i),p}} \in \cB}
\end{align*}
By taking the worst possible set $\cB$ (compatible with the event $A$) we obtain the bound:
\begin{align}\label{eq:supbadnode}
&\nonumber\pr{A, E_x, \forall i \in \{1, \dots, k^{h-3}\}, p \in \cA: \floor{L(x,S_i) X_{L(x,S_i),p}} \in \cB} \\
&\leq  \left(\sup_{B, u \geq n^{\beta-\delta}}\!\!\!\!\!\! \pr{ \floor{u X} \in B } \right)^{k \cdot k^{h-3}}
\end{align}
where the maximization is taken over all sets $B$ such that $|B \cap [2^r, 2^{r+1})| < 2^{r}/(100b)$ for all $r$. Note that a key point here is that the nodes $L(x, S_i)$ for $i \in \{1, \dots, k^{h-3}\}$ are distinct and all have labels at least $n^{\beta-\delta}$. It now suffices to bound the right-hand side of \eqref{eq:supbadnode}.
\begin{lemma}
\label{lem:problandbadnode}
Suppose $B \subseteq \{0, \dots, n\}$ such that for all $r$ satisfying $(\beta-2\delta)\log_2 n\leq r \leq \log_2 n$, we have
\[
|B \cap [2^r, 2^{r+1})| < 2^r/(100b).
\]
Then for any $y \geq n^{\beta - 
\delta}$, we have for $n$ large enough
\[
\pr{\floor{yX} \in B} \leq 1/2.
\]
\end{lemma}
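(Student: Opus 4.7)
The plan is to split the range of $\floor{yX}$ at the threshold $T = n^{\beta-2\delta}$ and control the two pieces separately. The hypothesis on $B$ only constrains its dyadic density at scales $r \ge (\beta - 2\delta)\log_2 n$, so the contribution of $B \cap [0, T)$ must be handled directly from the fact that $yX$ cannot easily be small, while the contribution of $B \cap [T, n]$ is handled through the dyadic estimate together with the bounded density $b$ of $X$.

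For the low part, observe that $\{\floor{yX} < T\} \subseteq \{X < T/y\}$ and, since $y \ge n^{\beta - \delta}$, we have $T/y \le n^{-\delta}$. The bounded density of $X$ then gives $\pr{X < n^{-\delta}} \le b\, n^{-\delta}$, which tends to $0$.

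For the high part, the density bound yields, for every integer $m$,
\[
\pr{\floor{yX} = m} = \pr{m/y \le X < (m+1)/y} \le b/y.
\]
Decomposing dyadically and using the hypothesis $|B \cap [2^r, 2^{r+1})| < 2^r/(100 b)$, I would obtain
\[
\pr{\floor{yX} \in B \cap [T, n]} \le \sum_{r^\star \le r \le R} |B \cap [2^r, 2^{r+1})|\cdot \frac{b}{y} \le \frac{1}{100\, y}\sum_{r^\star \le r \le R} 2^r,
\]
where $r^\star = \lceil (\beta - 2\delta)\log_2 n\rceil$ and $R = \lfloor \log_2 \min(n, y)\rfloor$ (one may restrict to $2^r < y$ because $\floor{yX} < y$, and to $2^r \le n$ because $B \subseteq \{0,\dots,n\}$). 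The geometric sum is bounded by $2\min(n,y)$, so the above probability is at most $2\min(n,y)/(100\,y) \le 1/50$.

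Combining the two parts gives $\pr{\floor{yX} \in B} \le b\, n^{-\delta} + 1/50 < 1/2$ for all $n$ large enough. There is no real obstacle in the argument; it is a bookkeeping exercise that propagates the sparseness of $B$ at each dyadic scale through the bounded-density bound for $yX$, and the only mild subtlety is to notice that the bound $y \ge n^{\beta-\delta}$ (rather than $y = \Theta(n)$) is exactly what is needed to make the low-label term $\pr{X < T/y}$ vanish.
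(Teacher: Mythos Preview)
Your argument follows the same dyadic decomposition as the paper's and is essentially correct, with two minor technical slips worth fixing. First, the inclusion $\{\floor{yX} < T\} \subseteq \{X < T/y\}$ is false as written (from $\floor{yX} < T$ you only get $yX < \lceil T\rceil \le T+1$, hence $X < (T+1)/y$); this is harmless since $(T+1)/y \le n^{-\delta} + n^{-(\beta-\delta)} \to 0$. Second, your split at $T = n^{\beta-2\delta}$ leaves the sliver $[T, 2^{r^\star})$ uncovered by either part: those integers lie in the dyadic block $[2^{r^\star-1}, 2^{r^\star})$, for which the hypothesis on $B$ gives no bound (it only applies for $r \ge r^\star$). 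The paper avoids this by placing the cutoff at $2^{r^\star}$ rather than at $T$, absorbing that range into the low part via $\pr{yX \le 2n^{\beta-2\delta}+1} \le 3b\,n^{-\delta}$; alternatively you could note directly that $|[T,2^{r^\star})| \le n^{\beta-2\delta}$ contributes at most $b\,n^{-\delta}$.
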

\begin{proof}
Let $r_{\max}$ the largest integer such that $2^{r_{\max}} \leq y$ and let $r_{\min}$ be the smallest integer at least as large as $(\beta-2\delta)\log_2 n$. We then have
\begin{align*}
\pr{\floor{yX} \in B} 	&\leq \pr{\floor{yX} \leq 2^{r_{\min}}} + \sum_{r=r_{\min}}^{r_{\max}} \pr{\floor{yX} \in B \cap [2^r, 2^{r+1})} \\
				&\leq \pr{yX \leq 2n^{\beta-2\delta} + 1 }+ \frac{b}{y} \cdot \frac{1}{100 b} \left( 2^{r_{\max}+1} + 2^{r_{\max}} + \dots + 2^{r_{\min}} \right)  \\
				&\leq 3 b n^{-\delta}+ \frac{2^{r_{\max} + 2}}{100 y}  \\
				&\leq 1/2
\end{align*}
for $n$ large enough.
\end{proof}
Getting back to equation \eqref{eq:boundminlongest}, we get
\begin{align*}
&\pr{\min_{n/2 \leq x \leq n} D_x < (1-\eps) \left(\beta- 2\delta \right) \lambda_k \log n + h+1}\\
&\leq \pr{A^c} + \sum_{x = \ceil{n/2}}^n \left( \pr{E_x^c} + (1/2)^{k^{h-2}} \right) \\
&= O(n^{-\eta})
\end{align*}
for small enough $\eta$ using Proposition~\ref{prop:probex}.
This concludes the proof of Lemma~\ref{lem:minlongestlowerb} and Theorem~\ref{thm:minlongest}.

\section{\bf Concluding remarks}
We studied the longest path distance in a general class of random recursive \textsc{dag}s. The parents of node $x$ are chosen independently and distributed as $\floor{xX}$ for some random variable $X \in [0,1)$. When $X$ has a bounded density, we proved laws of large numbers for the typical and minimum distance. 
For both of these results, the upper bounds do not make any assumption on the attachment distribution $X$. We use the condition of bounded density for the lower bound when bounding the dependencies between the different paths up to the root. It would be interesting to extend these results to more general distributions. More generally, under which conditions is it possible to translate a result in an ideal model like the branching random walk considered in Section~\ref{sec:BRW} into a result about a real model that exhibits limited dependencies?

\section*{\bf Acknowledgments}
We would like to thank Louigi Addario-Berry, Luc Devroye and Nicolas Fraiman for many helpful discussions as well as the anonymous referee for useful comments. Part of this work was completed while the authors were visiting McGill's Bellairs Research Institute, Barbados. NB's research is supported by the ANR-09-BLAN-0011  project Boole.

\bibliographystyle{abbrvnat}
\bibliography{longest}

\begin{thebibliography}{36}
\providecommand{\natexlab}[1]{#1}
\providecommand{\url}[1]{\texttt{#1}}
\expandafter\ifx\csname urlstyle\endcsname\relax
  \providecommand{\doi}[1]{doi: #1}\else
  \providecommand{\doi}{doi: \begingroup \urlstyle{rm}\Url}\fi

\bibitem[Addario-Berry and Reed(2009)]{AR09}
L.~Addario-Berry and B.~Reed.
\newblock {Minima in branching random walks}.
\newblock \emph{The Annals of Probability}, 37\penalty0 (3):\penalty0
  1044--1079, 2009.

\bibitem[A{\"\i}d{\'e}kon(2011)]{Aidekon2011}
{\'E}.~A{\"\i}d{\'e}kon.
\newblock {Convergence in law of the minimum of a branching random walk}.
\newblock arXiv:1101.1810 [math.PR], 2011.

\bibitem[Alon and Spencer(2008)]{AlSp2008}
N.~Alon and J.~Spencer.
\newblock \emph{The Probabilitic Method}.
\newblock Wiley, New York, NY, third edition, 2008.

\bibitem[Arya et~al.(1999)Arya, Golin, and Mehlhorn]{AGM99}
S.~Arya, M.~Golin, and K.~Mehlhorn.
\newblock {On the expected depth of random circuits}.
\newblock \emph{{Combinatorics, Probability and Computing}}, 8:\penalty0
  209--228, 1999.

\bibitem[Athreya and Ney(1972)]{AtNe1972}
K.~B. Athreya and P.~E. Ney.
\newblock \emph{Branching Processes}.
\newblock Springer, Berlin, 1972.

\bibitem[Bachmann(2000)]{Bachmann2000}
M.~Bachmann.
\newblock Limit theorems for the minimal position in a branching random walk
  with independent logconcave displacements.
\newblock \emph{Advances in Applied Probability}, 32:\penalty0 159--176, 2000.

\bibitem[Biggins(1976)]{Big76}
J.~Biggins.
\newblock {The first-and last-birth problems for a multitype age-dependent
  branching process}.
\newblock \emph{Advances in Applied Probability}, 8\penalty0 (3):\penalty0
  446--459, 1976.

\bibitem[Bramson(1978)]{Bramson1978a}
M.~Bramson.
\newblock Maximum displacement of branching brownian motion.
\newblock \emph{Communications on Pure and Applied Mathematics}, 31:\penalty0
  531--581, 1978.

\bibitem[Bramson and Zeitouni(2009)]{BrZe2009}
M.~Bramson and O.~Zeitouni.
\newblock Tightness for a family of recursive equations.
\newblock \emph{The Annals of Probability}, 37:\penalty0 615--653, 2009.

\bibitem[Chauvin and Drmota(2006)]{ChDr2006}
B.~Chauvin and M.~Drmota.
\newblock The random multisection problem, travelling waves, and the
  distribution of the height of $m$-ary search trees.
\newblock \emph{Algorithmica}, 46:\penalty0 299--327, 2006.

\bibitem[Chernoff(1952)]{Che52}
H.~Chernoff.
\newblock {A measure of asymptotic efficiency for tests of a hypothesis based
  on the sum of observations}.
\newblock \emph{The Annals of Mathematical Statistics}, 23:\penalty0 493--507,
  1952.

\bibitem[Chung and Erd\H{o}s(1952)]{CE52}
K.~Chung and P.~Erd\H{o}s.
\newblock {On the application of the Borel-Cantelli lemma}.
\newblock \emph{Transactions of the American Mathematical Society}, pages
  179--186, 1952.

\bibitem[Codenotti et~al.(1995)Codenotti, Gemmell, and Simon]{CoGeSi1995a}
B.~Codenotti, P.~Gemmell, and J.~Simon.
\newblock {Average circuit depth and average communication complexity}.
\newblock In \emph{Algorithms---ESA'95}, volume 979 of \emph{Lecture Notes in
  Computer Science}, pages 102--112, Berlin, 1995. Springer.

\bibitem[Darrasse et~al.(2010)Darrasse, Hwang, Bodini, and
  Soria]{DaHwBoSo2010a}
A.~Darrasse, H.-K. Hwang, O.~Bodini, and M.~Soria.
\newblock {The connectivity-profile of random increasing $k$-trees}.
\newblock In \emph{Proceedings of ANALCO}, 2010.

\bibitem[Dembo and Zeitouni(1998)]{DZ98}
A.~Dembo and O.~Zeitouni.
\newblock \emph{{Large {D}eviations {T}echniques and {A}pplications}}.
\newblock Springer Verlag, 1998.

\bibitem[Devroye(1986)]{Devroye1986}
L.~Devroye.
\newblock A note on the height of binary search trees.
\newblock \emph{Journal of the ACM}, 33:\penalty0 489--498, 1986.

\bibitem[Devroye(1998)]{Dev98}
L.~Devroye.
\newblock {Branching processes and their applications in the analysis of tree
  structures and tree algorithms}.
\newblock \emph{Probabilistic methods for algorithmic discrete mathematics},
  16:\penalty0 249--314, 1998.

\bibitem[Devroye and Janson(2011)]{DJ09}
L.~Devroye and S.~Janson.
\newblock Long and short paths in uniform random recursive dags.
\newblock \emph{Arkiv f\"{o}r Matematik}, 49:\penalty0 61--77, 2011.

\bibitem[Devroye and Lu(1995)]{DL95}
L.~Devroye and J.~Lu.
\newblock {The strong convergence of maximal degrees in uniform random
  recursive trees and dags}.
\newblock \emph{Random Structures and Algorithms}, 7\penalty0 (1):\penalty0
  1--14, 1995.

\bibitem[Devroye et~al.(2012)Devroye, Fawzi, and Fraiman]{DFF10}
L.~Devroye, O.~Fawzi, and N.~Fraiman.
\newblock Depth properties of scaled attachment random recursive trees.
\newblock \emph{Random Structures and Algorithms}, 2012.

\bibitem[Diaz et~al.(1994)Diaz, Serna, Spirakis, Toran, and Tsukiji]{DSSTT94}
J.~Diaz, M.~J. Serna, P.~Spirakis, J.~Toran, and T.~Tsukiji.
\newblock {On the expected depth of Boolean circuits}.
\newblock Technical Report LSI-94-7-R, Universitat Politecnica de Catalunya,
  Dep. LSI, 1994.

\bibitem[Drmota(2003)]{Drmota2003}
M.~Drmota.
\newblock An analytic approach to the height of binary search trees {II}.
\newblock \emph{Journal of the ACM}, 50:\penalty0 333--374, 2003.

\bibitem[D'Souza et~al.(2007)D'Souza, Krapivsky, and Moore]{DKM07}
R.~M. D'Souza, P.~L. Krapivsky, and C.~Moore.
\newblock {The power of choice of choice in growing trees}.
\newblock \emph{The European Physical Journal B - Condensed Matter and Complex
  Systems}, 59\penalty0 (4):\penalty0 535--543, 2007.

\bibitem[Grimmett and Stirzaker(2001)]{GS01}
G.~Grimmett and D.~Stirzaker.
\newblock \emph{{Probability and random processes}}.
\newblock Oxford University Press, USA, 2001.

\bibitem[Gut(2009)]{Gut2009}
A.~Gut.
\newblock \emph{{Stopped random walks. Limit theorems and applications}}.
\newblock Springer, New York, 2009.

\bibitem[Hammersley(1974)]{Ham74}
J.~Hammersley.
\newblock {Postulates for subadditive processes}.
\newblock \emph{The Annals of Probability}, pages 652--680, 1974.

\bibitem[Hu and Shi(2009)]{HuSh2009}
Y.~Hu and Z.~Shi.
\newblock Minimal position and critical martingale convergence in branching
  random walks, and directed polymers on disordered trees.
\newblock \emph{The Annals of Probability}, 37:\penalty0 742--789, 2009.

\bibitem[Kingman(1975)]{Kin75}
J.~Kingman.
\newblock {The first birth problem for an age-dependent branching process}.
\newblock \emph{The Annals of Probability}, 3\penalty0 (5):\penalty0 790--801,
  1975.

\bibitem[Mahmoud(2010)]{Mah09}
H.~Mahmoud.
\newblock {The power of choice in the construction of recursive trees}.
\newblock \emph{Methodology and Computing in Applied Probability}, 12\penalty0
  (4):\penalty0 763--773, 2010.

\bibitem[Mahmoud and Tsukiji(2004)]{MaTs2004a}
H.~Mahmoud and T.~Tsukiji.
\newblock {Limit laws for terminal nodes in random circuits with restricted
  fan-out: a family of graphs generalizing binary search trees}.
\newblock \emph{Acta Informatica}, 41:\penalty0 99--110, 2004.

\bibitem[McDiarmid(1995)]{McD95}
C.~McDiarmid.
\newblock Minimal positions in a branching random walk.
\newblock \emph{The Annals of Applied Probability}, 5\penalty0 (1):\penalty0
  128--139, 1995.

\bibitem[Pittel(1985)]{Pittel1985}
B.~Pittel.
\newblock Asymptotical growth of a class of random trees.
\newblock \emph{The Annals of Probability}, 13:\penalty0 414--427, 1985.

\bibitem[Reed(2003)]{Reed2003}
B.~Reed.
\newblock The height of a random binary search tree.
\newblock \emph{Journal of the ACM}, 50:\penalty0 306--332, 2003.

\bibitem[Rockafellar(1970)]{Rockafellar1970}
R.~Rockafellar.
\newblock \emph{Convex Analysis}.
\newblock Princeton University Press, Princeton, NJ, 1970.

\bibitem[Tsukiji and Mahmoud(2001)]{TsMa2001a}
T.~Tsukiji and H.~Mahmoud.
\newblock {A limit law for outputs in random recursive circuits}.
\newblock \emph{Algorithmica}, 31:\penalty0 403--412, 2001.

\bibitem[Tsukiji and Xhafa(1996)]{TX96}
T.~Tsukiji and F.~Xhafa.
\newblock {On the depth of randomly generated circuits}.
\newblock In \emph{Proceedings of Fourth European Symposium on Algorithms},
  pages 208--220, 1996.

\end{thebibliography}

\end{document}